\def\<{\langle}
\def\>{\rangle}
\let\ipscriptstyle=\scriptscriptstyle
\def\lipsqueeze{{\mskip -3.0mu}}
\def\ripsqueeze{{\mskip -3.0mu}}
\def\ipcomma{\nobreak\mathrel{,}\nobreak}
\newbox\ipstrutbox
\def\ipstrut{\copy\ipstrutbox}
\def\lip#1<#2,#3>{\mathopen{\relax_{\ipstrut\ipscriptstyle{
#1}}\lipsqueeze
\langle} #2\ipcomma #3 \rangle}
\def\blip#1<#2,#3>{\mathopen{\relax_{\ipstrut
\ipscriptstyle{ #1}}\lipsqueeze\bigl\langle} #2\ipcomma #3 \bigr\rangle}
\def\rip#1<#2,#3>{\langle #2\ipcomma #3
\rangle_{\ripsqueeze\ipstrut\ipscriptstyle{#1}}}
\def\brip#1<#2,#3>{\bigl\langle #2\ipcomma #3
\bigr\rangle_{\ripsqueeze\ipstrut\ipscriptstyle{#1}}}
\def\angsqueeze{\mskip -6mu}
\def\smangsqueeze{\mskip -3.7mu}
\def\trip#1<#2,#3>{\langle\smangsqueeze\langle #2\ipcomma #3
\rangle\smangsqueeze\rangle_{\ripsqueeze\ipstrut\ipscriptstyle{#1}}}
\def\btrip#1<#2,#3>{\bigl\langle\angsqueeze\bigl\langle #2\ipcomma
#3
\bigr\rangle
\angsqueeze\bigr\rangle_{\ripsqueeze\ipstrut\ipscriptstyle{#1}}}
\def\tlip#1<#2,#3>{\mathopen{\relax_{\ipstrut\ipscriptstyle{
#1}}\lipsqueeze \langle\smangsqueeze\langle} #2\ipcomma #3
\rangle\smangsqueeze\rangle}
\def\btlip#1<#2,#3>{\mathopen{\relax_{\ipstrut\ipscriptstyle{
#1}}\lipsqueeze
\bigl\langle\angsqueeze\bigl\langle} #2\ipcomma #3
\bigr\rangle\angsqueeze\bigr\rangle}
\def\ip(#1|#2){(#1\mid #2)}
\def\bip(#1|#2){\bigl(#1 \mid #2\bigr)}
\def\Bip(#1|#2){\Bigl( #1 \bigm| #2 \Bigr)}
\def\notnu{\rho}
\newtheorem{theorem}{Theorem}[section]
\newtheorem{thm}[theorem]{Theorem}
\newtheorem{lemma}[theorem]{Lemma}
\newtheorem{prop}[theorem]{Proposition}
\newtheorem{cor}[theorem]{Corollary}
\theoremstyle{remark}
\newtheorem{example}[theorem]{Example}
\newtheorem{remark}[theorem]{Remark}
\newtheorem*{problem*}{Problem}
\newtheorem*{remark*}{Remark}
\newtheorem*{convention*}{Convention}
\newtheorem*{notation*}{Notation}
\newtheorem*{examples*}{Examples}
\newtheorem*{example*}{Example}
\newtheorem*{warning*}{Warning}
\numberwithin{equation}{section}
\def\DD{{\mathcal D}}
\def\Cc{{\mathcal C}}
\def\K{{\mathcal K}}
\def\L{{\mathcal L}}
\newcommand{\nd}{\textup{nd}}
\newcommand{\Fix}{\operatorname{\mathtt{Fix}}}
\newcommand{\RCP}{\operatorname{\mathtt{RCP}}}
\newcommand{\CP}{\operatorname{\mathtt{CP}}}
\newcommand{\CC}{\operatorname{\mathtt{C*}}}
\newcommand{\CCnd}{\operatorname{\CC_{\!\!\!\!\nd}}}
\newcommand{\Aa}{\operatorname{\mathtt{C*act}}}
\newcommand{\Aca}{\operatorname{\mathtt{C*coact}}}
\newcommand{\Graph}{\operatorname{\mathtt{DG}}}
\newcommand{\AGraph}{\operatorname{\mathtt{DGact}}}
\newcommand{\FAGraph}{\operatorname{\mathtt{DGfreeact}}}
\newcommand{\Qq}{\operatorname{\mathtt{Q}}}
\newcommand{\CK}{\operatorname{\mathtt{CK}}}
\newcommand{\nor}{\textup{n}}
\newcommand\tensor{\otimes}
\newcommand{\rt}{\textup{rt}}
\newcommand{\id}{\textup{id}}
\newcommand{\Ad}{\operatorname{Ad}}
\newcommand{\Aut}{\operatorname{Aut}}
\newcommand{\newspan}{\operatorname{span}}
\newcommand{\clsp}{\overline{\operatorname{span}}}
\def\Mor{\textup{Mor}}
\newcommand{\ib}{im\-prim\-i\-tiv\-ity bi\-mod\-u\-le}
\begin{document}

\title[Morita equivalence for proper actions]{Naturality of Rieffel's Morita equivalence\\ for proper actions}

\author[an Huef]{Astrid an Huef}
\address{School of Mathematics and Statistics\\
The University of New South Wales\\Sydney\\
NSW 2052\\
Australia}
\email{astrid@unsw.edu.au}

\author[Kaliszewski]{S. Kaliszewski}
\address{Department of Mathematical and Statistical Sciences\\Arizona
State University\\Tempe\\ AZ
85287-1804\\USA} \email{kaliszewski@asu.edu}

\author[Raeburn]{Iain Raeburn}
\address{School  of Mathematics and Applied Statistics, University of
Wollongong, NSW 2522, Australia}
\email{raeburn@uow.edu.au}

\author[Williams]{Dana P. Williams}
\address{Department of Mathematics\\Dartmouth College\\
Hanover, NH 03755\\USA}
\email{dana.williams@dartmouth.edu}

\begin{abstract}
Suppose that a locally compact group $G$ acts freely and properly on the right of a locally compact space $T$. Rieffel proved that if $\alpha$ is an action of $G$ on a $C^*$-algebra $A$ and there is an equivariant embedding of $C_0(T)$ in $M(A)$, then the action $\alpha$ of $G$ on $A$ is proper, and the crossed product $A\rtimes_{\alpha,r}G$ is Morita equivalent to a generalised fixed-point algebra $\Fix(A,\alpha)$ in $M(A)^\alpha$. We show that the assignment $(A,\alpha)\mapsto\Fix(A,\alpha)$ extends to a functor $\Fix$ on a category of $C^*$-dynamical systems in which the isomorphisms are Morita equivalences, and that Rieffel's Morita equivalence implements a natural isomorphism between a crossed-product functor and $\Fix$. From this, we deduce naturality of Mansfield imprimitivity for crossed products by coactions, improving results of Echterhoff-Kaliszewski-Quigg-Raeburn and Kaliszewski-Quigg-Raeburn, and naturality of a Morita equivalence for graph algebras due to Kumjian and Pask.
\end{abstract}

\subjclass[2000]{46L55}
\date{October 13, 2008}

\thanks{This research was supported by the Australian Research Council
  and the Edward Shapiro fund at Dartmouth College.}

\maketitle

\section{Introduction}

When $\alpha$ is an action of a compact group $G$ on a unital
$C^*$-algebra $A$, there is a large fixed-point algebra $A^\alpha$
which plays an important role in the analysis of the crossed
product $A\rtimes_\alpha G$. When $G$ is locally compact and $A$ is not unital, there may
be very few fixed points: for example, when
$G$ acts on a commutative $C^*$-algebra $C_0(T)$, fixed points would
be functions which are constant on $G$-orbits in $T$, and such
functions need not vanish at infinity. There has nevertheless
been considerable interest in situations where there is a useful
analogue of $A^\alpha$ in the multiplier algebra $M(A)$ \cite{exel,aHRWproper,
  meyer, sit, rw85, proper, integrable}. Here we are particularly
interested in the \emph{proper actions} introduced by Rieffel in
\cite{proper}; in the motivating example, $A=C_0(T)$ is commutative,
$G$ acts properly on the right of $T$, $\alpha$ is the action $\rt$ of
$G$ by right translation on functions, and the algebra $C_0(T/G)$, which we can
view as a subalgebra of $C_b(T)=M(C_0(T))$, is an excellent substitute
for the missing fixed-point algebra.

An action $\alpha:G\to \Aut A$ is proper in the sense of \cite{proper}
if there is a dense $\alpha$-invariant $*$-subalgebra $A_0$ of $A$ which has properties like
those of the subalgebra $C_c(T)$ of $C_0(T)$. In particular,
$A_0$ carries an $M(A)$-valued inner product, and the completion
$Z(A,\alpha)$ of $A_0$ in this inner product is a
Hilbert module over a \emph{generalised fixed-point algebra}
$A^\alpha\subset M(A)$; Rieffel identified a class of \emph{saturated}
proper actions for which $Z(A,\alpha)$ implements a Morita equivalence
between $A^\alpha$ and the reduced crossed product
$A\rtimes_{\alpha,r}G$ \cite[Corollary~1.7]{proper}. In the motivating
example $(C_0(T),\rt)$, the action is proper in Rieffel's sense if
and only if the underlying action of $G$ on $T$ is proper, and
saturated if and only if $G$ acts freely on $T$ (see
\cite[\S3]{marrae}, for example), and the generalised fixed-point algebra is $C_0(T/G)$.

There are by now many examples of proper actions, and there is often
an obvious choice for the dense subalgebra $A_0$. However, there is in
general no systematic method for finding a suitable $A_0$, and thus
Morita equivalences obtained from the construction of \cite{proper}
ostensibly depend on choices.  More recently, though, Rieffel has
shown that if $G$ acts properly on $T$ and there is a nondegenerate
homomorphism $\phi:C_0(T)\to M(A)$ such that $\phi\circ\rt=\alpha$,
then $(A,\alpha)$ is proper with respect to
\[
A_0:=\newspan\{\phi(f)a\phi(g):a\in A, f,g\in C_c(T)\}
\]
(see \cite[Theorem~5.7]{integrable}). There are several situations where there are canonical choices for the system $(C_0(T),\rt)$ and
the homomorphism $\phi$, and then one can ask questions about the
functoriality and naturality of the constructions in \cite{proper}.

To ask questions about functoriality and naturality, one first needs to decide what categories one is working in. The objects in our underlying category $\CC$ are $C^*$-algebras, and the morphisms from $A$ to $B$ are isomorphism classes of the right-Hilbert $A$\,--\,$B$ bimodules which were invented by Rieffel \cite{rieff} to place the theory of induced representations of groups in a $C^*$-algebraic setting. The category $\CC$ was introduced in \cite{taco} and \cite{enchilada} as a setting for imprimitivity theorems for crossed products by actions and coactions, and was independently discovered in other contexts by Landsman \cite{landsman1,landsman2} and by Schweizer  \cite{schweizer}. It has the attractive feature that the invertible morphisms are those which are based on imprimitivity bimodules (see, for example, \cite[Proposition~2.6]{taco}).

Here we extend $\CC$ to a category $\Aa(G)$ of dynamical systems $(A,\alpha)$, show that the constructions $(A,\alpha)\mapsto A\rtimes_{\alpha,r}G$ and $(A,\alpha)\mapsto A^\alpha$ are the object maps in functors from a subcategory of $\Aa(G)$ to $\CC$, and prove that Rieffel's bimodules $Z(A,\alpha)$ implement a natural isomorphism between these functors. We then illustrate our general theorem with two applications. We first prove that the version of Mansfield's imprimitivity theorem for crossed products by coactions in \cite{hrman} is natural, thereby extending one of the main theorems of \cite{enchilada} from normal subgroups to arbitrary subgroups. We then show that if $G$ acts freely on a directed graph $E$, then Kumjian and Pask's Morita equivalence of $C^*(E)\rtimes G$ and $C^*(E/G)$ from \cite{kpaction} is also natural. In a sequel we plan to discuss further applications to nonabelian duality, and in particular to the representation theory of crossed products by coactions of homogeneous spaces and to the duality of restriction and induction, as in \cite{kqrold} and \cite[\S5.2]{enchilada}.

Our starting point is recent work \cite{kqcat, kqrproper} concerning characterisations of crossed-product $C^*$-algebras due to Landstad \cite{landstad} and Quigg \cite{quigg}. Quigg's theorem identifies crossed products by coactions of $G$ as the systems $(A,\alpha)$ which carry an equivariant embedding $j_G:(C_0(G),\rt)\to (M(A),\alpha)$. In \cite{kqrproper}, it was observed that adding the equivariant
homomorphism $j_G$ to the system $(A,\alpha)$ gives an element of
what is called a \emph{comma category}. In general, if $b$ is an object in a
category ${\mathtt D}$, then in the comma category $b\downarrow {\mathtt D}$ the
objects are pairs $(c,\phi)$ with $\phi\in \Mor(b,c)$, and the
morphisms from $(c,\phi)$ to $(d,\psi)$ are morphisms $\theta:c\to d$
in ${\mathtt D}$ such that $\psi=\theta\circ\phi$. The objects in the main
category $\Aa_{\nd}(G)$ of \cite{kqrproper} are systems $(A,\alpha)$,
the morphisms from $(A,\alpha)$ to $(B,\beta)$ are nondegenerate
homomorphisms $\psi:A\to M(B)$ such that $\psi\circ\alpha=\beta$, and
the data $((A,\alpha),j_G)$ in Quigg's theorem defines an
object in the comma category $(C_0(G),\rt)\downarrow
\Aa_{\nd}(G)$. The theorem of Rieffel we discussed above (\cite[Theorem~5.7]{integrable}) says that if $G$ acts properly on $T$, then systems in
the comma category $(C_0(T),\rt)\downarrow\Aa_{\nd}(G)$ are proper in the sense of
\cite{proper}; if $G$ acts freely on $T$, then they are also
saturated, so that \cite[Corollary~1.7]{proper} gives a Morita
equivalence $Z(A,\alpha)$. Corollary~2.8 of \cite{kqrproper} says that
$A\mapsto A^\alpha$ is the object map in a
functor $\Fix$ on the comma category $(C_0(T),\rt)\downarrow
\Aa_{\nd}(G)$; Theorem~3.2 of \cite{kqrproper} says that the assignment $(A,\alpha)\mapsto Z(A,\alpha)$ gives a
natural isomorphism between $\Fix$ and the reduced crossed-product
functor $\RCP$ --- provided we view $\Fix$ and $\RCP$ as taking values in $\CC$.

Our first task is to find a suitable analogue of the comma category
for the category $\Aa(G)$ (as opposed to the category $\Aa_{\nd}(G)$ used in \cite{kqrproper}). Choosing the right
category is crucial: to prove that a system $(A,\alpha)$ is
proper, we need a morphism from $(C_0(T),\rt)$ to $(A,\alpha)$
which is implemented by a nondegenerate homomorphism rather than a
right-Hilbert bimodule, and this forces some asymmetry in our
hypotheses. We call our choice the \emph{semi-comma category}
associated to $(C_0(T),\rt)$, and denote it $\Aa(G,(C_0(T),\rt))$:
the objects in $\Aa(G,(C_0(T),\rt))$ consist of a system
$(A,\alpha)$ and a nondegenerate equivariant homomorphism
$\phi:C_0(T)\to M(A)$, and the morphisms are the usual morphisms in
the category $\CC$. (In the semi-comma category,
unlike the comma category, we do \emph{not} require that the morphisms
are in any way compatible with the homomorphisms $\phi$.) We discuss the semi-comma category and our reasons for choosing it in \S\ref{sec-semicomma}. A key
technical result (Corollary~\ref{factor}) says that every morphism in the semi-comma category factors as a composition of a Morita equivalence and a
morphism which comes from one in the comma category
$(C_0(T),\rt)\downarrow \Aa_{\nd}(G)$, and to which the results of
\cite{kqrproper} apply.

To extend Rieffel's map $(A,\alpha,\phi)\mapsto A^\alpha$ to a functor $\Fix$ from the semi-comma category $\Aa(G,(C_0(T),\rt))$ to $\CC$, we need to handle morphisms. We already know from \cite[Theorem~2.6]{kqrproper} that every nondegenerate homomorphism restricts to a nondegenerate homomorphism between fixed-point algebras, so our first task is to say how we intend to $\Fix$  imprimitivity bimodules. We do this in \S\ref{sec-fix}, using a linking-algebra argument. We then define $\Fix$ on general morphisms by factoring them, dealing with the two pieces, and reassembling. At this point we can state our two main theorems: Theorem~\ref{thm-fix}, which says that $\Fix$ is indeed a functor, and Theorem~\ref{thm-main}, which says that Rieffel's bimodules implement a natural equivalence between $\Fix$ and a reduced crossed-product functor.

We prove our main theorems in \S\ref{sec-proofs}. The hard bit is proving that $\Fix$ing morphisms respects composition. To see why this is hard, notice that it includes as a special case the assertion that when $X$ and $Y$ are imprimitivity bimodules, $\Fix(X\otimes_B Y)$ is isomorphic to $(\Fix X)\otimes_{\Fix B}(\Fix Y)$ --- indeed, we need to prove this as an important step in the proof of Theorem~\ref{thm-fix}. This part of the paper contains some interesting technical innovations: we make heavy use of linking algebras, and introduce a $3\times 3$ matrix trick which we think may be useful elsewhere (see \S\ref{2ibs}).

Our first application is in \S\ref{sec-appl}. We consider the category $\Aca^{\nor}(G)$ of normal coactions $(B,\delta)$ of $G$ and a closed subgroup $H$ of $G$. We deduce from \cite[\S3.1.2]{enchilada} that there is a crossed-product functor $\CP:\Aca^{\nor}(G)\to \Aa(G,(C_0(G),\rt))$ which takes $(B,\delta)$ to $(B\rtimes_{\delta} G,\hat\delta, j_G)$, and then applying Theorem~\ref{thm-main} with $(T,G)=(G,H)$ shows that Rieffel's bimodules give a natural isomorphism between $(B\rtimes_\delta G)\rtimes_{\hat\delta} H$ and $\Fix_H(B\rtimes_\delta G)$. To deduce naturality of Manfield imprimitivity from this we need to identify $\Fix_H(B\rtimes_\delta G)$ with the crossed product $B\rtimes_{\delta} (G/H)$, which we do in Proposition~\ref{idcpGH}. The final result is Theorem~\ref{newnatforcoact}, which directly improves Theorem~6.2 of \cite{kqrproper} by replacing the category $\CCnd$ with $\CC$, and Theorem~4.3 of \cite{enchilada} by removing the requirement that the subgroup is normal.

In the last section, we give our new application to graph algebras, which is naturality of the Morita equivalence of \cite[Theorem~1.6]{pr} (see Theorem~\ref{prnatural}). As in \S\ref{sec-appl}, we have to work to identify $\Fix$ with the functor appearing in \cite{pr}, which takes an action of $G$ on a graph $E$ to the $C^*$-algebra $C^*(E/G)$ of the quotient graph. In this application we are working with group actions on discrete sets, and the semi-comma category appears only as a technical device in the proof of Theorem~\ref{prnatural}.

\section{The semi-comma category}\label{sec-semicomma}

In our basic category $\CC$ the objects are $C^*$-algebras and the
morphisms from $A$ to $B$ are isomorphism classes $[{}_AX_B]$ of
right-Hilbert $A$\,--\,$B$ bimodules $X$. We always assume
that the left action of $A$ on $X$ is implemented by a nondegenerate
homomorphism $\kappa_A:A\to \L(X)$, and that $X$ is a full Hilbert
module, in the sense that the ideal $\newspan\{\langle
x,y\rangle_B:x,y\in X\}$ is dense in $B$. It is proved in
\cite[\S2]{taco} that $\CC$ is a category with composition defined by
$[{}_BY_C]\,[{}_AX_B]=[{}_A(X\otimes_B Y)_C]$ and the identity
morphism at the object $A$ given by $[{}_AA_A]$; it is shown in
\cite[Proposition~2.6]{taco}, for example, that the isomorphisms in
$\CC$ are the classes whose elements are imprimitivity bimodules. We
observe straight away that this category $\CC$ is not the
same\footnote{Nor is it the same as the category denoted by $\Cc$ in
  \cite{enchilada}, where the right-Hilbert bimodules ${}_AX_B$
  implementing the morphisms are not required to be full as right
  Hilbert modules.}  as the category $\CCnd$ used in
\cite{kqrproper} (and there denoted by $\Cc$), in which the objects are $C^*$-algebras and the morphisms
from $A$ to $B$ are nondegenerate homomorphisms from $A$ to
$M(B)$; every morphism $\psi:A\to B$ in $\CCnd$ gives rise to a
morphism $[\psi]$ in $\CC$ with underlying right-Hilbert bimodule
$B_B$ and the left action given by $\psi$ (the difference
between $\psi$ and $[\psi]$ is explained in
\cite[Proposition~2.3]{taco}).

Throughout we consider a fixed locally compact group $G$, and several
associated categories. It is shown in \cite[Proposition~3.3]{taco}
that there is a category $\Aa(G)$ whose objects are dynamical systems
$(A,\alpha)$ consisting of an action $\alpha$ of $G$ on a
$C^*$-algebra $A$, and whose morphisms from $(A,\alpha)$ to
$(B,\beta)$ are isomorphism classes $[X,u]$ of right-Hilbert $A$\,--\,$B$
bimodules $X$ which carry an \emph{$\alpha$--$\beta$ compatible action} $u$ of $G$ satisfying
\[
u_s(a\cdot x\cdot b)=\alpha_s(a)\cdot u_s(x)\cdot\beta_s(b)\ \text{ and }\ \langle u_s(x),u_s(y)\rangle_B=\beta_s(\langle x,y\rangle_B).
\]
Again,
$\Aa(G)$ is not the same as the category $\Aa_{\nd}(G)$ used in
\cite{kqrproper}, where the morphisms are given by nondegenerate
equivariant homomorphisms.

We now fix a free and proper right action of $G$ on a locally compact
space $T$, and consider the action $\rt:G\to \Aut C_0(T)$ defined by
$\rt_s(f)(t)=f(t\cdot s)$. In the \emph{semi-comma category} $\Aa(G,
(C_0(T),\rt))$, the objects $(A,\alpha,\phi_A)$ are systems
$(A,\alpha)$ in $\Aa(G)$ together with a nondegenerate homomorphism
$\phi_A:C_0(T)\to M(A)$ which is $\rt$\,--\,$\alpha$ equivariant, and the
morphisms from $(A,\alpha,\phi_A)$ to $(B,\beta,\phi_B)$ are just the
usual morphisms $[X,u]$ from $(A,\alpha)$ to $(B,\beta)$ in $\Aa(G)$,
with the same composition defined by balanced tensor product of
right-Hilbert bimodules. It follows immediately from
\cite[Proposition~3.3]{taco} that the semi-comma category is indeed a
category. This may seem an unusual choice of category, and we will say
more at the end of the section about our reasons for choosing it (see Remark~\ref{pleaseexplain}). However, it is easy
to explain why the morphism $\phi_A$ is there: it allows us to deduce
from Theorem~5.7 of \cite{integrable} that the system $(A,\alpha)$ is
proper in the sense of \cite{proper} with respect to the subalgebra
$A_0:=\newspan\phi_A(C_c(T))A\phi_A(C_c(T))$.

A key technical result says that every morphism in the semi-comma
category factors into an isomorphism (that is, a morphism implemented
by an imprimitivity bimodule) and a morphism which comes from a
nondegenerate homomorphism. This is proved in
\cite[Proposition~2.27]{enchilada}, which we can
restate in our terms as follows.

\begin{prop}\label{prop-enchilada}
  Let $[X,u]:(A,\alpha)\to(B,\beta)$ be a morphism in
  $\Aa(G)$. Then there is a unique action $\mu:G\to \K(X_B)$ such that
  $(X,u)$ is a $(\K(X),\mu)$\,--\,$(B,\beta)$ imprimitivity bimodule. The
nondegenerate homomorphism $\kappa_A:A\to \L(X)=M(\K(X))$ given by the left action
  is $\alpha$\,--\,$\mu$ equivariant, and $({}_A(\K(X)\otimes_B X)_B,
  \mu\otimes u)$ is isomorphic to $({}_AX_B,u)$, so that $[X,u]$
  is the composition $[{}_{\K(X)}X_B,u]\,[\kappa_A]$ in $\Aa(G)$ of an
  imprimitivity bimodule and a nondegenerate homomorphism.
\end{prop}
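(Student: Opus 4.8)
The plan is to carry the $G$-actions through the non-equivariant factorisation of \cite[Proposition~2.27]{enchilada}. Write $\Theta_{x,y}$ for the rank-one operator $z\mapsto x\cdot\langle y,z\rangle_B$ on $X$, so $\K(X)=\clsp\{\Theta_{x,y}:x,y\in X\}$. Since the left action of $A$ is nondegenerate and $X$ is full over $B$, the usual construction makes $X$ into a $\K(X)$\,--\,$B$ imprimitivity bimodule with left inner product ${}_{\K(X)}\langle x,y\rangle=\Theta_{x,y}$, and $\kappa_A\colon A\to\L(X)=M(\K(X))$ is the associated nondegenerate homomorphism. So everything except the $G$-actions is already in place, and the job is to build $\mu$ and verify the equivariance assertions.

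First I would set $\mu_s:=\Ad u_s$, that is, $\mu_s(T):=u_sTu_s^{-1}$ for $T\in\L(X)$; this makes sense because each $u_s$ is an invertible isometry of $X$ with $\langle u_s(x),u_s(y)\rangle_B=\beta_s(\langle x,y\rangle_B)$, so $u_sTu_s^{-1}$ is adjointable with adjoint $u_sT^*u_s^{-1}$, and $T\mapsto u_sTu_s^{-1}$ is a $*$-automorphism of $\L(X)$ of norm $1$. A short computation with the defining identities of an $\alpha$--$\beta$ compatible action gives $u_s\Theta_{x,y}u_s^{-1}=\Theta_{u_s(x),u_s(y)}$, so $\mu_s$ restricts to an automorphism of $\K(X)$, and since $s\mapsto u_s(x)$ is continuous and $\|\mu_s\|=1$, $s\mapsto\mu_s(T)$ is continuous for every $T\in\K(X)$; as $u$ is a homomorphism so is $\mu$, and thus $\mu$ is an action of $G$ on $\K(X)$. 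The same formula yields ${}_{\K(X)}\langle u_s(x),u_s(y)\rangle=\mu_s({}_{\K(X)}\langle x,y\rangle)$; the identity $u_s(T\cdot x)=\mu_s(T)\cdot u_s(x)$ for $T\in\K(X)$ is immediate from $\mu_s=\Ad u_s$; and with the given identities $u_s(x\cdot b)=u_s(x)\cdot\beta_s(b)$ and $\langle u_s(x),u_s(y)\rangle_B=\beta_s(\langle x,y\rangle_B)$ these say precisely that $(X,u)$ is a $(\K(X),\mu)$\,--\,$(B,\beta)$ imprimitivity bimodule. Extending $\mu_s$ strictly to $M(\K(X))=\L(X)$ (where it is still $\Ad u_s$), the $A$-version of the compatibility identity gives $\mu_s(\kappa_A(a))\cdot u_s(x)=u_s(\kappa_A(a)\cdot x)=\alpha_s(a)\cdot u_s(x)$, hence $\mu_s\circ\kappa_A=\kappa_A\circ\alpha_s$. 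Uniqueness is forced: any action $\mu'$ making $(X,u)$ such an imprimitivity bimodule must have $\mu'_s({}_{\K(X)}\langle x,y\rangle)={}_{\K(X)}\langle u_s(x),u_s(y)\rangle=\mu_s({}_{\K(X)}\langle x,y\rangle)$, and these inner products are dense in $\K(X)$.

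For the factorisation, recall that $[\kappa_A]$ is by definition the morphism $(A,\alpha)\to(\K(X),\mu)$ with underlying bimodule ${}_A\K(X)_{\K(X)}$, left action $\kappa_A$, and compatible action $\mu$; by the definition of composition in $\Aa(G)$, the composite $[{}_{\K(X)}X_B,u]\,[\kappa_A]$ is the class of $\K(X)\otimes_{\K(X)}X$ with the tensor-product action $\mu\otimes u$. The canonical map $\Phi\colon\K(X)\otimes_{\K(X)}X\to X$, $\Phi(T\otimes x)=T\cdot x$, is an isomorphism of right-Hilbert $A$\,--\,$B$ bimodules, and on elementary tensors $\Phi\bigl((\mu\otimes u)_s(T\otimes x)\bigr)=\mu_s(T)\cdot u_s(x)=u_s(T\cdot x)=u_s\bigl(\Phi(T\otimes x)\bigr)$, so $\Phi$ intertwines $\mu\otimes u$ with $u$; this identifies the composite with $[X,u]$, as required.

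I do not expect a real obstacle: the substantive content is \cite[Proposition~2.27]{enchilada}, and the additional work is the construction and strong continuity of $\mu=\Ad u$, the bookkeeping of the imprimitivity-bimodule compatibility conditions, and the equivariance of $\Phi$, all routine. The one point that needs a moment's care is promoting strong continuity of $\mu$ from the dense set of $\Theta_{x,y}$ to all of $\K(X)$, which the uniform bound $\|\mu_s\|=1$ takes care of.
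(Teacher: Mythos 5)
Your argument is correct, and every step checks out: $\mu_s=\Ad u_s$ does map $\L(X)$ to itself (the computation with $\langle u_sTu_s^{-1}x,y\rangle_B=\langle x,u_sT^*u_s^{-1}y\rangle_B$ is exactly what is needed, since $u_s$ is only $\beta_s$-linear and so not itself adjointable), the identity $u_s\Theta_{x,y}u_s^{-1}=\Theta_{u_s(x),u_s(y)}$ gives invariance of $\K(X)$, strong continuity on rank-ones plus $\|\mu_s\|=1$ gives strong continuity, the left-inner-product identity forces uniqueness, and the canonical isomorphism $T\otimes x\mapsto T\cdot x$ of $\K(X)\otimes_{\K(X)}X$ onto $X$ intertwines $\mu\otimes u$ with $u$ and respects the left $\kappa_A$-action, which is precisely the asserted factorisation $[X,u]=[{}_{\K(X)}X_B,u]\,[\kappa_A]$. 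The only difference from the paper is one of route rather than substance: the paper offers no proof at all, simply restating Proposition~2.27 of the Echterhoff--Kaliszewski--Quigg--Raeburn memoir, and that proposition is already formulated for the equivariant category, so your description of it as a ``non-equivariant factorisation'' through which the actions must be carried slightly undersells the citation --- the actions are already built into it. What your write-up buys is a self-contained verification of exactly the facts the citation encapsulates (the construction of $\mu=\Ad u$ and its uniqueness, the $\alpha$--$\mu$ equivariance of $\kappa_A$, and the equivariance of the canonical isomorphism), which is the standard argument behind the cited result; it is also worth noting that your reading of the tensor product as $\K(X)\otimes_{\K(X)}X$ correctly repairs the subscript in the statement as printed.
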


When we view $[X,u]$ as a morphism between objects
$(A,\alpha,\phi_A)$ and $(B,\beta,\phi_B)$ in the semi-comma category
$\Aa(G, (C_0(T),\rt))$, the composition
$\phi_{\K(X)}:=\kappa_A\circ\phi_A$ is an equivariant nondegenerate
homomorphism of $(C_0(T),\rt)$ into $(\K(X),\mu)$; since
$\kappa_A:A\to M(\K(X))$ trivially intertwines $\phi_A$ and
$\phi_{\K(X)}$,
$\kappa_A:(A,\alpha,\phi_A)\to(\K(X),\mu,\phi_{\K(X)})$ is a morphism
in the comma category $(C_0(T),\rt)\downarrow \Aa_{\nd}(G)$ considered
in \cite{kqrproper}. So every morphism in our semi-comma category
factors as the composition of a morphism implemented by an
imprimitivity bimodule and a morphism in the comma category of
\cite{kqrproper}. The imprimitivity bimodule has a left action of
$C_0(T)$ implemented by $\phi_{\K(X)}:C_0(T)\to M(\K(X))$ and a right
action of $C_0(T)$ implemented by $\phi_B:C_0(T)\to M(B)$, but we make
no assumption relating these left and right
actions.

\begin{example}
 To reinforce this last point, consider the morphism
  $[{}_AB_B,\beta]$ from $(A,\alpha,\phi_A)$ to $(B,\beta,\phi_B)$
  induced by an equivariant nondegenerate homomorphism $\psi:A\to
  M(B)$. Since $\K(B_B)=B$, the imprimitivity bimodule in the
  factorisation of $[{}_AB_B,\beta]$ is just ${}_BB_B$, but the left
  action of $C_0(T)$ is implemented by $\psi\circ \phi_A$ rather than $\phi_B$.
\end{example}

We summarise the above discussion:

\begin{cor}\label{factor}
  Every morphism $[X,u]:(A,\alpha,\phi_A)\to (B,\beta,\phi_B)$
  in the semi-comma category $\Aa(G, (C_0(T),\rt))$ factors as the
  composition $[{}_{\K(X)}X_B,u]\,[\kappa_A]$ of a morphism
  $[\kappa_A]$ coming from a morphism
  $\kappa_A:(A,\alpha,\phi_A)\to(\K(X),\mu,\phi_{\K(X)})$ in the comma
  category $(C_0(T),\rt)\downarrow \Aa_{\nd}(G)$ with a morphism
  $[{}_{\K(X)}X_B,u]$ implemented by a $\K(X)$\,--\,$B$ imprimitivity
  bimodule. We call this factorisation the \emph{canonical decomposition} of $[X,u]$.
\end{cor}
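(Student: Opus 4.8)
The plan is to read the corollary off directly from Proposition~\ref{prop-enchilada} together with the discussion preceding the statement; there is no genuinely new content, only bookkeeping in the various (semi-)comma categories. First I would take a morphism $[X,u]\colon(A,\alpha,\phi_A)\to(B,\beta,\phi_B)$ in $\Aa(G,(C_0(T),\rt))$ and forget the homomorphisms $\phi_A,\phi_B$, regarding $[X,u]$ as a morphism $(A,\alpha)\to(B,\beta)$ in $\Aa(G)$ --- which is legitimate, since by definition morphisms in the semi-comma category \emph{are} morphisms in $\Aa(G)$, with the same composition law. Proposition~\ref{prop-enchilada} then supplies the action $\mu$ of $G$ on $\K(X)$ making $(X,u)$ a $(\K(X),\mu)$\,--\,$(B,\beta)$ imprimitivity bimodule, the $\alpha$\,--\,$\mu$ equivariant nondegenerate homomorphism $\kappa_A\colon A\to\L(X)=M(\K(X))$ given by the left action, and the factorisation $[X,u]=[{}_{\K(X)}X_B,u]\,[\kappa_A]$ in $\Aa(G)$.

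Next I would promote this to the comma setting. Put $\phi_{\K(X)}:=\kappa_A\circ\phi_A\colon C_0(T)\to M(\K(X))$, using the canonical extension of the nondegenerate homomorphism $\kappa_A$ to $M(A)$; this is precisely the composition of $\phi_A$ and $\kappa_A$ in $\Aa_\nd(G)$. As a composition of nondegenerate homomorphisms it is nondegenerate, and it is $\rt$\,--\,$\mu$ equivariant, since $\phi_A$ is $\rt$\,--\,$\alpha$ equivariant and $\kappa_A$ is $\alpha$\,--\,$\mu$ equivariant. Hence $(\K(X),\mu,\phi_{\K(X)})$ is an object of the comma category $(C_0(T),\rt)\downarrow\Aa_\nd(G)$, and $\kappa_A$ --- being a nondegenerate equivariant homomorphism that satisfies $\phi_{\K(X)}=\kappa_A\circ\phi_A$ by construction --- is a morphism $(A,\alpha,\phi_A)\to(\K(X),\mu,\phi_{\K(X)})$ in that comma category; passing to the associated right-Hilbert bimodule class gives the morphism $[\kappa_A]$ appearing in the factorisation.

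Finally, Proposition~\ref{prop-enchilada} tells us that $[{}_{\K(X)}X_B,u]$ is implemented by a $\K(X)$\,--\,$B$ imprimitivity bimodule, so it is an isomorphism in $\Aa(G)$, and hence also in the semi-comma category, where composition is computed exactly as in $\Aa(G)$; the identity $[X,u]=[{}_{\K(X)}X_B,u]\,[\kappa_A]$ therefore holds verbatim in $\Aa(G,(C_0(T),\rt))$, which is what is claimed. Since the semi-comma category imposes no compatibility between morphisms and the homomorphisms $\phi$, nothing else needs to be checked, and we are free to name this factorisation the canonical decomposition. The only step requiring any care --- and it is minor --- is the passage to multiplier algebras in the equivariance computation, handled by the standard fact that a nondegenerate equivariant homomorphism extends uniquely to an equivariant homomorphism of multiplier algebras.
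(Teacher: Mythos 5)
Your argument is correct and is essentially the paper's own: apply Proposition~\ref{prop-enchilada} to get $\mu$, $\kappa_A$ and the factorisation in $\Aa(G)$, set $\phi_{\K(X)}=\kappa_A\circ\phi_A$, check nondegeneracy and $\rt$\,--\,$\mu$ equivariance so that $\kappa_A$ becomes a morphism in the comma category, and note that composition in the semi-comma category is just composition in $\Aa(G)$. The extra care you take with the extension of $\kappa_A$ to multiplier algebras is a harmless elaboration of what the paper leaves implicit.
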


\begin{remark}\label{pleaseexplain}
  We now explain why we have added the homomorphism $\phi_A$ to
  the object $(A,\alpha)$ and then completely ignored it in our choice
  of morphisms. In particular, we explain why two obvious variations do not serve our purposes.

  First, we could have used objects in the comma category associated
  to the object $(C_0(T),\rt)$ of $\Aa(G)$.  As observed above, the
  homomorphism $\phi_A$ is there to ensure that our system is proper
  in the sense of \cite{proper}, which follows from Theorem~5.7 of
  \cite{integrable}. If we merely suppose that there is a morphism
  $[X,u]$ from $(C_0(T),\rt)$ to $(A,\alpha)$, so that $(A,\alpha,
  [X,u])$ is an object in the comma category, then we would need an
  analogue of \cite[Theorem~5.7]{integrable} which said that the
  existence of $(X,u)$ implied that $(A,\alpha)$ was proper. But there
  cannot be such a theorem --- indeed, for \emph{every} system
  $(A,\alpha)$ in $\Aa(G)$, proper or not, there is such a morphism
  from $(C_0(G),\rt)$ to $(A,\alpha)$. To see this, we take $(X,u)$ to
  be the bimodule $(A\otimes L^2(G),\alpha\otimes\rho)$ which
  implements the Morita equivalence between the second dual system
  $((A\rtimes_\alpha G)\rtimes_{\hat\alpha}G,\hat{\hat\alpha})$ and
  $(A,\alpha)$ (this is one formulation of the duality theorem of Imai
  and Takai; see, for example, \cite[Theorem~A.67]{enchilada}). Then
  with the left action of $C_0(G)$ given by $1\otimes M$, $(X,u)$
  defines a morphism in $\Aa(G)$ from $(C_0(G),\rt)$ to $(A,\alpha)$.

  Second, we could have used the morphisms from the usual comma
  category associated to the object $(C_0(T),\rt)$ in $\Aa(G)$, which
  are morphisms $[X,u]:(A,\alpha,\phi_A)\to (B,\beta,\phi_B)$ in
  $\Aa(G)$ such that $[X,\mu]\,[\phi_A]=[\phi_B]$. However, the
  right-Hilbert $B$-module underlying $[X,\mu]\,[\phi_A]$ is $X_B$, so
  $[X,\mu]\,[\phi_A]=[\phi_B]$ would imply in particular that $X_B$ is
  isomorphic to the trivial module $B_B$. Since we want to discuss
  naturality of Morita equivalences, we definitely want to allow
  morphisms based on non-trivial Hilbert modules. So we choose not to
  impose any relation between the left action of $C_0(T)$ on $X$
  coming from $\phi_A$ and the right action coming from $\phi_B$.
\end{remark}

\section{The functor Fix}\label{sec-fix}

We fix a free and proper action of a locally compact
group $G$ on a locally compact space $T$. In \cite{kqrproper}, it was
shown that Rieffel's generalised fixed-point algebra is the object map
in a functor $\Fix$ from the comma category $(C_0(T),\rt)\downarrow
\Aa_{\nd}(G)$ to $\CCnd$. In this section we extend $\Fix$ to a
functor on our semi-comma category. Since this category has the same
objects as the comma category $(C_0(T),\rt)\downarrow \Aa_{\nd}(G)$,
$\Fix$ is already defined on objects, and our first
main theorem says that we can extend the functor in \cite{kqrproper} to
cover our more general morphisms. We can then state our other main theorem on the
naturality of Rieffel's Morita equivalence.

We begin by recalling the construction of $\Fix$ from
\cite{kqrproper}. Suppose that $(A,\alpha,\phi)$ is an object in the
comma category $(C_0(T),\rt)\downarrow \Aa_{\nd}(G)$. We write $A_0$ for the dense $*$-subalgebra
$\phi(C_c(T))A\phi(C_c(T))$ of $A$; as in \cite{kqrproper}, we often
suppress the map $\phi$, so that
\[
A_0=C_c(T)AC_c(T)=\newspan\{fag:a\in A,\ f,g\in C_c(T)\}.
\]
(When $X$ is a $B$-module, the juxtaposition  $XB$
denotes $\newspan\{x\cdot b:x\in X,b\in B\}$.)

In \cite[\S2]{kqrproper} it was shown, using techniques of
Olesen-Pedersen \cite{OP1, OP2} and Quigg \cite{quigg, QR}, that for
every element $a\in A_0$, there is a multiplier $E(a)=E^A(a)$ such
that
\begin{equation}\label{defE}
  \omega(E^A(a))=\int \omega(\alpha_s(a))\,ds\ \text{ for all $\omega\in A^*$.}
\end{equation}
For fixed $f,g\in C_c(T)$, the map $a\mapsto E(fag)$ is norm
continuous on $M(A)$ \cite[Corollary~3.6(3)]{quigg}, and the range
$E(A_0)$ is a $*$-subalgebra of the fixed-point algebra $M(A)^\alpha$
\cite[Proposition~2.4]{kqrproper}. The algebra $\Fix(A,\alpha,\phi)$
is by definition the norm closure of $E(A_0)$ in $M(A)$.

Rieffel proved that the action $\alpha$ is proper and
saturated with respect to the subalgebra $A_0$ (see
\cite[Theorem~5.7]{integrable} and
\cite[Lemma~C.1]{aHRWproper2}). Thus there is a
generalised fixed-point algebra $A^\alpha$ in $M(A)$ which is Morita
equivalent to $A\rtimes_{\alpha,r}G$; an imprimitivity bimodule
$Z(A,\alpha,\phi_A)$ which implements this equivalence is obtained by
completing $A_0$ in the $A^\alpha$-valued inner
product $\langle a,b\rangle:=E^A(a^*b)$. In \cite[Proposition~3.1]{kqrproper}, it was shown that
$A^\alpha$ coincides with the fixed-point algebra $\Fix
A=\Fix(A,\alpha,\phi)$. It was also shown in \cite{kqrproper} that
$(A,\alpha,\phi)\mapsto\Fix(A,\alpha,\phi)$ is the object map in a
functor $\Fix:(C_0(T),\rt)\downarrow \Aa_{\nd}(G)\to \CCnd$:
indeed, the functor $\Fix$ simply sends the morphism based on a
nondegenerate homomorphism $\sigma:A\to M(B)$ to the restriction of
$\sigma$ to $\Fix A\subset M(A)$, which is itself is a nondegenerate
homomorphism of $\Fix A$ into $M(\Fix B)$
\cite[Proposition~2.6]{kqrproper}. The upshot is that, viewed as a
map on the comma category, Rieffel's assignment
$(A,\alpha,\phi)\mapsto A^\alpha$ becomes a functor.

To make $\Fix$ into a functor from our semi-comma category
$\Aa(G,(C_0(T),\rt))$ into $\CC$, we need to extend $\Fix$ to
morphisms $[X,u]$.  Our strategy, borrowed from \cite{enchilada}, is
to factor $[X,u]$ in $\Aa(G, (C_0(T),\rt))$ using
Corollary~\ref{factor}. We can apply the results of \cite{kqrproper}
to the nondegenerate homomorphism, and now we need to know how to $\Fix$ imprimitivity bimodules.

\begin{prop}\label{defgfpaib}
  Suppose that $(K,\mu,\phi_K)$ and $(B,\beta,\phi_B)$ are objects in
  the semi-comma category $\Aa(G, (C_0(T),\rt))$, and that
  ${}_{(K,\mu)}(X,u)_{(B,\beta)}$ is an imprimitivity bimodule. Denote
  by $L(u)$ the action of $G$ on the linking algebra $L(X)$, and let
  $\phi=\phi_K\oplus\phi_B$ be the diagonal embedding of $C_0(T)$ in $M(L(X))$.
  Then $(L(X),L(u),\phi)$ is an object in $\Aa(G, (C_0(T),\rt))$,
  and has generalised fixed-point algebra
  $L(X)^{L(u)}:=\Fix(L(X),L(u),\phi)$. The embeddings of $K$ and $B$ as corners in
  $L(X)$ extend to embeddings of $M(K)$ and $M(B)$ as corners
  $pM(L(X))p$ and $qM(L(X))q$ in $M(L(X))$, and these carry the generalised fixed-point
  algebras $K^\mu$ and $B^\beta$ onto the corners $pL(X)^{L(u)}p$ and
  $qL(X)^{L(u)}q$. With the operations inherited from $M(L(X))$, the
  corner $X^u:=pL(X)^{L(u)}q$ is a $K^\mu$\,--\,$B^\beta$ imprimitivity
  bimodule, and $L(X)^{L(u)}=L(X^u)$.
\end{prop}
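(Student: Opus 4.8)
The plan is to work entirely inside the linking algebra $L(X)$ and its multiplier algebra, exploiting the fact that everything in sight is a corner of $L(X)$ cut down by the complementary full projections $p,q\in M(L(X))$. First I would check the preliminary assertions: that $L(u)$ is a well-defined action of $G$ on $L(X)$ (this is standard — the compatibility conditions on $(X,u)$ are exactly what is needed for the $2\times2$ matrix of actions $\operatorname{diag}(\mu,\beta)$ together with $u$ and its adjoint to assemble into a $*$-automorphism group), and that $\phi=\phi_K\oplus\phi_B$ is a nondegenerate $\rt$--$L(u)$ equivariant homomorphism into $M(L(X))$, so that $(L(X),L(u),\phi)$ is genuinely an object of the semi-comma category. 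Equivariance of $\phi$ is immediate from equivariance of $\phi_K$ and $\phi_B$, since $L(u)$ restricts to $\mu$ and $\beta$ on the corners. Then $\Fix(L(X),L(u),\phi)$ is defined, and I would write $L(X)_0$ for the dense subalgebra $\phi(C_c(T))L(X)\phi(C_c(T))$ used to build it; crucially, because $p$ and $q$ lie in $M(L(X))^{L(u)}$ and commute with $\phi(C_0(T))$, cutting $L(X)_0$ by $p$ and $q$ gives exactly $K_0$, $B_0$, and the analogous spaces $pL(X)_0q$, $qL(X)_0p$ in the off-diagonal corners.

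The second step is the identification of the corners of $L(X)^{L(u)}$ with the fixed-point algebras $K^\mu$ and $B^\beta$ and with $X^u$. Here I would use the defining formula \eqref{defE} for the averaging map $E=E^{L(X)}$: because $p$ is an $L(u)$-fixed multiplier, $E^{L(X)}(pzp)=pE^{L(X)}(z)p$ for $z\in L(X)_0$, and for $z=a\in K_0$ the functional characterisation \eqref{defE} shows $E^{L(X)}(a)$ agrees with $E^K(a)$ under the corner embedding $M(K)\hookrightarrow M(L(X))$. Taking closures gives $pL(X)^{L(u)}p = K^\mu$, and likewise $qL(X)^{L(u)}q=B^\beta$; for the off-diagonal corner this says $X^u:=pL(X)^{L(u)}q$ is the closure of $E^{L(X)}(pL(X)_0q)$. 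The embeddings $M(K)=pM(L(X))p$, $M(B)=qM(L(X))q$ are the standard corner embeddings for a linking algebra, and they are automatically compatible with the relevant $E$'s by the same fixed-multiplier argument. I would also need $L(X)^{L(u)}$ to be the \emph{full} $2\times2$ array of these four corners, i.e. $L(X)^{L(u)}=L(X^u)$ as a set; this follows because $p+q=1$ lets one decompose any element of $L(X)^{L(u)}$ into its four corner components, each of which is then shown to lie in $K^\mu$, $B^\beta$, $X^u$, or $(X^u)^*$ respectively by the computation just described.

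The third step is to promote this set-level decomposition to the statement that $X^u$, with the operations \emph{inherited from} $M(L(X))$, is a $K^\mu$--$B^\beta$ imprimitivity bimodule, and that the linking algebra of that bimodule is precisely $L(X)^{L(u)}$. Since $L(X)^{L(u)}$ is a $C^*$-subalgebra of $M(L(X))$ containing the complementary projections $p,q$ with $pL(X)^{L(u)}p=K^\mu$, $qL(X)^{L(u)}q=B^\beta$ both full in $L(X)^{L(u)}$ (fullness of the corners in $L(X)^{L(u)}$ is inherited from fullness of $K^\mu,B^\beta$ as generalised fixed-point algebras, or checked directly from density of $E(K_0)$, $E(B_0)$), the abstract theory of linking algebras (a $C^*$-algebra with two complementary full projections is canonically the linking algebra of its $pC q$ corner) gives at once that $pL(X)^{L(u)}q$ is a $K^\mu$--$B^\beta$ imprimitivity bimodule and that $L(X)^{L(u)}=L(X^u)$. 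The only thing to be careful about is \emph{fullness} of the corner $X^u$: one must verify that $\clsp\{\langle x,y\rangle : x,y\in X^u\}=B^\beta$ and symmetrically on the left; I expect this to be the main obstacle, since it is not purely formal — it requires knowing that $E^{L(X)}$ does not collapse the off-diagonal part, equivalently that the inner products $E^{L(X)}(x^*y)$ for $x,y$ in the $(1,2)$-corner of $L(X)_0$ span a dense subset of $B^\beta=\overline{E^{L(X)}(B_0)}$. This in turn should come from Rieffel's saturation of the action $L(u)$ on $L(X)$ (which holds because $G$ acts freely, by \cite[Theorem~5.7]{integrable} applied to $(L(X),L(u),\phi)$): saturation says the full linking-algebra-level ideal generated by the range of $E$ on all of $L(X)_0$ is everything, and cutting by $p,q$ distributes this fullness across the four corners. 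Once fullness is in hand, everything else — the module axioms, positivity, the identification of the two inner products — is inherited verbatim from the ambient $C^*$-algebra $L(X)^{L(u)}$, so no further work is required.
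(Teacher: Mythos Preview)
Your overall plan is sound and matches the paper's: check that $(L(X),L(u),\phi)$ lies in the semi-comma category, identify the diagonal corners of $L(X)^{L(u)}$ with $K^\mu$ and $B^\beta$ by compatibility of the averaging maps, and then show $X^u:=pL(X)^{L(u)}q$ is an imprimitivity bimodule. You also correctly flag fullness of the inner products on $X^u$ as the real issue.

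The gap is your argument for that fullness. Saturation in Rieffel's sense concerns the \emph{crossed-product} side: it says the left inner products on $Z(L(X),L(u),\phi)$ span all of $L(X)\rtimes_{L(u),r}G$. Your claim that ``saturation says the full linking-algebra-level ideal generated by the range of $E$ on all of $L(X)_0$ is everything'' is either a misreading of saturation or vacuous --- the closure of $E(L(X)_0)$ is $L(X)^{L(u)}$ \emph{by definition}, so this gives nothing. The parenthetical ``or checked directly from density of $E(K_0)$, $E(B_0)$'' fails for the same reason. What is actually needed is that products of \emph{off-diagonal} fixed elements fill the diagonal corner, and there is no purely formal route from fullness of $p$ in $L(X)$, or from saturation, to fullness of $p$ in the subalgebra $L(X)^{L(u)}$. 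The paper instead argues directly, and the key ingredient is \cite[Lemma~3.4]{kqrproper}: the subspace $\phi(C_c(T))E^{L(X)}(L(X)_0)$ is dense in $Z(L(X),L(u),\phi)$. Given a generator $E^{L(X)}\big(\big(\begin{smallmatrix}0&0\\0&fbg\end{smallmatrix}\big)\big)$ of $B^\beta$, one first uses fullness of $p$ in $L(X)$ and norm-continuity of $R\mapsto E^{L(X)}(fRg)$ to approximate by sums $\sum_i \langle pR_i^*q, pS_iq\rangle_{L(X)^{L(u)}}$ with $R_i,S_i\in L(X)_0$, and then invokes the density lemma to replace each $R_i^*$ (viewed in $Z$) by a finite sum $\sum_j \phi(h_{ij})E^{L(X)}(R_{ij})$. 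Since $p$ commutes with $\phi$ and each $E^{L(X)}(R_{ij})$ is $L(u)$-fixed, a short computation shows the result lies in $(X^u)^*X^u$. That density lemma is precisely the missing idea: it is what converts a factorisation through $p$ in the ambient algebra $L(X)$ into one through $p$ in $L(X)^{L(u)}$.
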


\begin{proof}
  Let $k\in K_0$. It follows from \cite[Lemma~2.2]{kqrproper} that
  multiplying elements $\phi(f)$ by $\big(\begin{smallmatrix}
    E^K(k)&0\\0&0 \end{smallmatrix}\big)$ and
  $E^{L(X)}\big(\begin{smallmatrix} k&0\\0&0 \end{smallmatrix}\big)$
  both give $\big(\begin{smallmatrix} \int_G f\mu_s(k)\,
    ds&0\\0&0 \end{smallmatrix}\big)$; since $\phi$ is nondegenerate
  this implies that $\big(\begin{smallmatrix}
    E^K(k)&0\\0&0 \end{smallmatrix}\big)=E^{L(X)}\big(\begin{smallmatrix}
    k&0\\0&0 \end{smallmatrix}\big)$. Thus $K^\mu:=\clsp\{E^K(k):k\in
  K_0\}$ embeds as $pL(X)^{L(u)}p$. Similarly
  $B^\beta=qL(X)^{L(u)}q$. So $X^u:=pL(X)^{L(u)}q$ is a
  $K^\mu$\,--\,$B^\beta$ bimodule, and has $K^\mu$- and $B^\beta$-valued
  inner products given by computing in $L(X)^{L(u)}$.  We need to see
  that these inner products are full.

  Let $f,g\in C_c(T)$ and $b\in B$, so that $E^{L(X)}\big(\big(\begin{smallmatrix} 0&0\\0&fbg \end{smallmatrix}\big)\big)$ is a typical element of $qL(X)^{L(u)}q$.  Since $R\mapsto E^{L(X)}(fRg)$ is
  norm continuous on $M(L(X))$ and $p$ is full, we can
  approximate $E^{L(X)}\big(\big(\begin{smallmatrix} 0&0\\0&fbg \end{smallmatrix}\big)\big)$ by a
  sum $\sum_iE^{L(X)}(qR_ipS_iq) $ where $S_i,R_i\in L(X)_0$.  Since
  $\phi(C_c(T))E^{L(X)}(L(X)_0)$ is dense in $Z(L(X),L(u),\phi)$ by
  \cite[Lemma~3.4]{kqrproper}, we can approximate each $R_i^*\in
  L(X)_0$ by a sum $ \sum_j\phi(h_{ij})E^{L(X)}(R_{ij}) $ where
  $h_{ij}\in C_c(T)$ and $R_{ij}\in L(X)_0$. Now
  \begin{align}\label{eq-lem0}
    E^{L(X)}\big(\big(\begin{smallmatrix} 0&0\\0&fbg \end{smallmatrix}\big)\big)
    &\sim \sum_iE^{L(X)}(qR_ipS_iq)\notag\\
    &=\sum_i\langle pR_i^*q,pS_iq\rangle_{L(X)^{L(u)}}\notag\\
    &\sim\sum_{i,j}\big\langle p\phi(h_{ij})E^{L(X)}(R_{ij})q\,,\,pS_iq\big\rangle_{L(X)^{L(u)}}\notag\\
    &=\sum_{i,j} E^{L(X)}\big( qE^{L(X)}(R_{ij})^*\phi(h_{ij})^*pS_iq \big)\notag\\
    &=\sum_{i,j} qE^{L(X)}(R_{ij})^*pE^{L(X)}\big( \phi(h_{ij})^*S_i
    \big)q
  \end{align}
  because $p$ commutes with the image of $\phi$. But the
  right-hand side of \eqref{eq-lem0} is an element of
  $(X^u)^*X^u=\langle X^u\,,\, X^u\rangle_{B^\beta}$, and we have proved that the
  $B^\beta$-valued inner product is full.  Similarly, the
  $K^\mu$-valued inner product is full.
\end{proof}

We now extend the definition of $\Fix$ to right-Hilbert
bimodules. Suppose that $(X,u)$ is a right-Hilbert $A$\,--\,$B$ bimodule
which implements a morphism \[[X,u]:(A,\alpha,\phi_A)\to
(B,\beta,\phi_B)\] in $\Aa(G, (C_0(T),\rt))$. We factor
$[X,u]=[{}_{\K(X)}X_{B},u]\,[\kappa_A]$ as in
Corollary~\ref{factor}. By \cite[Proposition~2.6]{kqrproper}, the
extension of $\kappa_A$ to a homomorphism $\kappa_A:M(A)\to M(\K(X))$
restricts to a nondegenerate homomorphism
\[\kappa_A|:\Fix(A,\alpha,\phi_A)\to M(\Fix(\K(X),\mu,\kappa_A\circ
\phi_A)).
\]
Proposition~\ref{defgfpaib} implies that $X^u$ is a
$\Fix(\K(X),\mu,\kappa_A\circ \phi_A)$\,--\,$\Fix(B,\beta,\phi_B)$
imprimitivity bimodule, and since $\Fix(\K(X),\mu,\kappa_A\circ
\phi_A)=\K(X^u)$, we can view $\kappa_A|$ as a homomorphism of $\Fix
A$ into $\L(X^u)$. We define $\Fix(X,u)$ to be the right-Hilbert
$\Fix(A,\alpha,\phi_A)$\,--\,$\Fix(B,\beta,\phi_B)$ bimodule $X^u$ in
which the left action is given by $\kappa_A|$.  In view of
Lemma~\ref{lem-well-defined} below we can define
\[\label{defFixmor}
\Fix([X,u]):=[\Fix(X,u)]=[{}_{\K(X^u)}(X^u)_{\Fix(B,\beta,\phi_B)}]\,[\kappa_A|].
\]

\begin{lemma}\label{lem-well-defined}
  Suppose that $\psi:(X,u)\to (Y,v)$ is an equivariant isomorphism of
  right-Hilbert $A$\,--\,$B$ bimodules, so that $[X,u]=[Y,v]$ as morphisms
  from $(A,\alpha,\phi_A)$ to $(B,\beta,\phi_B)$. Then $\Fix(X,u)$ is
  isomorphic to $\Fix(Y,v)$.
\end{lemma}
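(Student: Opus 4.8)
The plan is to realise the asserted isomorphism as the restriction to generalised fixed-point algebras of the isomorphism of linking algebras induced by $\psi$, using the functoriality of $\Fix$ on the comma category $(C_0(T),\rt)\downarrow\Aa_{\nd}(G)$ from \cite{kqrproper} together with Proposition~\ref{defgfpaib}. Write $(\K(X),\mu,\phi_{\K(X)})$ and $(\K(Y),\nu,\phi_{\K(Y)})$ for the objects occurring in the canonical decompositions of $[X,u]$ and $[Y,v]$ furnished by Corollary~\ref{factor}, so that $\Fix(X,u)=X^u$ carries the left action $\kappa_A^X|$ and $\Fix(Y,v)=Y^v$ carries the left action $\kappa_A^Y|$.

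First I would transport the canonical decomposition along $\psi$. Since $\psi$ is a bijective, inner-product-preserving $A$\,--\,$B$ bimodule map, $T\mapsto\psi T\psi^{-1}$ defines a $*$-isomorphism $\Ad\psi\colon\K(X)\to\K(Y)$ carrying $\Theta_{x,y}$ to $\Theta_{\psi x,\psi y}$; because $\psi$ intertwines $u$ and $v$, $\Ad\psi$ intertwines the actions $\mu$ and $\nu$ of Proposition~\ref{prop-enchilada}, and because $\psi$ is $A$\,--\,linear, $\kappa_A^Y=\Ad\psi\circ\kappa_A^X$ as maps on $M(A)$, whence $\phi_{\K(Y)}=\Ad\psi\circ\phi_{\K(X)}$. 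So $\Ad\psi$ is an isomorphism $(\K(X),\mu,\phi_{\K(X)})\to(\K(Y),\nu,\phi_{\K(Y)})$ in the comma category. Next, $\psi$, $\Ad\psi$, the induced map $\widetilde\psi$ on conjugate modules, and $\id_B$ assemble into a $*$-isomorphism of linking algebras $\Psi\colon L(X)\to L(Y)$ carrying the corner projections $p,q\in M(L(X))$ to those of $M(L(Y))$. Since $\psi$ intertwines $u$ and $v$, $\Psi$ intertwines $L(u)$ and $L(v)$; and since $\Psi$ is $\Ad\psi$ on the $p$-corner and $\id_B$ on the $q$-corner, its extension to multipliers intertwines the diagonal embeddings $\phi=\phi_{\K(X)}\oplus\phi_B$ and $\phi'=\phi_{\K(Y)}\oplus\phi_B$. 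Hence $\Psi\colon(L(X),L(u),\phi)\to(L(Y),L(v),\phi')$ is an isomorphism in $(C_0(T),\rt)\downarrow\Aa_{\nd}(G)$.

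Next I would push $\Psi$ through $\Fix$. The defining property \eqref{defE} of the averaging maps shows that the multiplier extension of $\Psi$ intertwines $E^{L(X)}$ and $E^{L(Y)}$ on the dense subalgebras $L(X)_0$ and $L(Y)_0$ (equivalently, apply the comma-category functor $\Fix$ of \cite{kqrproper} to the isomorphism $\Psi$), so it restricts to a $*$-isomorphism $L(X)^{L(u)}\to L(Y)^{L(v)}$ which respects the corner decomposition and equals the identity on the $B$-corner $\Fix(B,\beta,\phi_B)$. By Proposition~\ref{defgfpaib} this restriction carries $\K(X^u)=\Fix(\K(X),\mu,\phi_{\K(X)})$ onto $\K(Y^v)=\Fix(\K(Y),\nu,\phi_{\K(Y)})$, and carries $X^u=pL(X)^{L(u)}q$ onto $Y^v=pL(Y)^{L(v)}q$ by an isometric $\Fix(B,\beta,\phi_B)$-linear map preserving the $\Fix(B,\beta,\phi_B)$-valued inner product. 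Finally, the left action of $\Fix(A,\alpha,\phi_A)$ on $\Fix(X,u)$ is $\kappa_A^X|$, while on $\Fix(Y,v)$ it is $\kappa_A^Y|=(\Ad\psi)|\circ\kappa_A^X|$ by the first step; since $(\Ad\psi)|$ is exactly the restriction of $\Psi$ to the $p$-corner, $\Psi$ intertwines these left actions. Therefore $\Psi$ restricts to an isomorphism of right-Hilbert $\Fix(A,\alpha,\phi_A)$\,--\,$\Fix(B,\beta,\phi_B)$ bimodules from $\Fix(X,u)$ onto $\Fix(Y,v)$.

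The hard part is not conceptual but bookkeeping: one must check carefully that $\Psi$ is a well-defined $*$-isomorphism of linking algebras compatible both with the $G$-actions and with the diagonal embeddings of $C_0(T)$, and then keep precise track of how $\Psi$ interacts with the corners (and with the ambient multiplier algebras) when passing to fixed-point algebras, so that the map produced is genuinely an isomorphism of right-Hilbert bimodules over the identity maps on $\Fix(A,\alpha,\phi_A)$ and $\Fix(B,\beta,\phi_B)$ and carrying the prescribed left action.
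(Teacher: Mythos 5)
Your proposal is correct and follows essentially the same route as the paper: factor $[X,u]$ and $[Y,v]$ via Corollary~\ref{factor}, note that $\Ad\psi:\K(X)\to\K(Y)$ is an equivariant isomorphism compatible with the $\kappa_A$'s, assemble $\psi$, $\Ad\psi$ and $\id_B$ into an equivariant, $C_0(T)$-compatible isomorphism of linking algebras, and restrict it to the generalised fixed-point algebras so that its corners give the desired right-Hilbert bimodule isomorphism of $\Fix(X,u)$ onto $\Fix(Y,v)$. Your explicit verification that the left actions are intertwined (via $\kappa_A^Y|=(\Ad\psi)|\circ\kappa_A^X|$) is just a slightly more careful rendering of the paper's statement that $[\kappa_A|]=[\kappa_A'|]$.
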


\begin{proof}
  We factor $X$ and $Y$ as in Corollary~\ref{factor} to obtain actions
  $\mu$, $\mu'$ and nondegenerate homomorphisms $\kappa_A:(A,\alpha)\to
  (\K(X),\mu)$, $\kappa_A':(A,\alpha)\to (\K(Y),\mu')$.  Define
  $\rho:\K(X)\to \K(Y)$ by
  $\rho(\Theta_{x,w})=\Theta_{\psi(x),\psi(w)}$, and  then $\rho$ is a
  $\mu$\,--\,$\mu'$ equivariant isomorphism satisfying
  $\rho\circ\kappa_A=\kappa_A'$. It follows that
  $[\kappa_A|]=[\kappa_A'|]$, and we need to show that
  $({}_{\K(X)}X_B)^u$ and $({}_{\K(Y)}Y_B)^v$ are isomorphic as
  imprimitivity bimodules.  Define $\psi_L:L(X)\to L(Y)$ and
  $\phi_X:C_0(T)\to M(L(X))$, $\phi_Y:C_0(T)\to M(L(Y))$ by
  \[
\psi_L=
  \begin{pmatrix}\rho&\psi\\ * &\id_B
  \end{pmatrix},\quad \phi_{L(X)}
  =(\kappa_A\circ\phi_A)\oplus\phi_B\quad\text{and}\quad\phi_{L(Y)}=
(\kappa_A'\circ\phi_A)\oplus\phi_B.
  \]
  Then $\psi_L$ is an isomorphism which maps $L(X)_0$ to $L(Y)_0$, and
  hence $\psi_L|$ is an isomorphism of the generalised fixed point
  algebras $L(X)^{L(u)}$ and $L(Y)^{L(v)}$. It follows that the three
  corners $\rho|$, $\psi|$ and $\id_B|$ in $\psi_L|$ form an
  imprimitivity-bimodule isomorphism of $({}_{\K(X)}X_B)^u$ onto
  $({}_{\K(Y)}Y_B)^v$.
\end{proof}

\begin{thm}\label{thm-fix}
  Suppose a locally compact group $G$ acts freely and properly on a
  locally compact space $T$. Then the assignments
  \[
  (A,\alpha,\phi_A)\mapsto \Fix (A,\alpha,\phi_A)\ \text{ and }\
  [X,u]\mapsto \Fix([X,u])
  \]
  form a functor $\Fix:\Aa(G,(C_0(T),\rt))\to\CC$.
\end{thm}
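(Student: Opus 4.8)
The plan is to verify the two functor axioms. The identity axiom is quick: the identity morphism at $(A,\alpha,\phi_A)$ is $[{}_AA_A,\alpha]$, whose canonical decomposition (Corollary~\ref{factor}) has $\K(A_A)=A$, $\mu=\alpha$ and $\kappa_A=\id_A$, and the argument of Proposition~\ref{defgfpaib} applied to the trivial imprimitivity bimodule ${}_AA_A$ identifies $(A_A)^\alpha$ with $\Fix(A,\alpha,\phi_A)$ viewed as an imprimitivity bimodule over itself, so $\Fix([{}_AA_A,\alpha])$ is the identity morphism on $\Fix(A,\alpha,\phi_A)$. All the weight is in showing that $\Fix$ respects composition.

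Given composable morphisms $[X,u]\colon(A,\alpha,\phi_A)\to(B,\beta,\phi_B)$ and $[Y,v]\colon(B,\beta,\phi_B)\to(C,\gamma,\phi_C)$, write their canonical decompositions $[X,u]=\iota_X\circ h_A$ and $[Y,v]=\iota_Y\circ h_B$, where $h_A=[\kappa_A]$ and $h_B=[\kappa_B]$ come from morphisms in the comma category $(C_0(T),\rt)\downarrow\Aa_{\nd}(G)$ and $\iota_X$, $\iota_Y$ are implemented by imprimitivity bimodules; by the very definition of $\Fix$ on morphisms, $\Fix([X,u])=\Fix(\iota_X)\circ\Fix(h_A)$ and $\Fix([Y,v])=\Fix(\iota_Y)\circ\Fix(h_B)$. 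Since $[Y,v]\circ[X,u]=\iota_Y\circ(h_B\circ\iota_X)\circ h_A$, it suffices to establish three compatibilities: (i)~$\Fix$ respects composition of two morphisms coming from the comma category --- this is the functoriality of $\Fix$ from \cite[\S2]{kqrproper}, noting that the homomorphism in a canonical decomposition is by construction compatible with the maps $\phi$, so that \cite[Proposition~2.6]{kqrproper} applies; (ii)~$\Fix$ respects the composite of a comma-category morphism after an imprimitivity-bimodule morphism, and vice versa; and (iii)~$\Fix$ respects the composite of two imprimitivity-bimodule morphisms. Granting these, we rewrite $h_B\circ\iota_X$ via its canonical decomposition $\iota'\circ h'$, check that $(\iota_Y\circ\iota')\circ(h'\circ h_A)$ is the canonical decomposition of $[Y,v]\circ[X,u]$ --- this reduces to identifying $\K\big((X\otimes_B Y)_C\big)$ and the induced left action of $A$ on the composite bimodule, a routine computation with internal tensor products --- and then collapse
\[
\Fix([Y,v]\circ[X,u])=\Fix(\iota_Y\circ\iota')\circ\Fix(h'\circ h_A)
\]
using (iii), (i), (ii) and the associativity of composition in $\CC$, arriving at $\Fix(\iota_Y)\circ\Fix(h_B)\circ\Fix(\iota_X)\circ\Fix(h_A)=\Fix([Y,v])\circ\Fix([X,u])$.

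The crux is (iii), which we attack with a $3\times3$ matrix trick. Given equivariant imprimitivity bimodules ${}_{(A,\alpha)}(X,u)_{(B,\beta)}$ and ${}_{(B,\beta)}(Y,v)_{(C,\gamma)}$ and $\rt$-equivariant nondegenerate homomorphisms $\phi_A$, $\phi_B$, $\phi_C$, form the $3\times3$ linking algebra
\[
M=\begin{pmatrix}A&X&X\otimes_B Y\\ \widetilde X&B&Y\\ \widetilde{X\otimes_B Y}&\widetilde Y&C\end{pmatrix},
\]
a $C^*$-algebra carrying the diagonal action $L(u,v)$ of $G$ and the diagonal embedding $\Phi=\phi_A\oplus\phi_B\oplus\phi_C$ of $C_0(T)$, so $(M,L(u,v),\Phi)$ is an object of $\Aa(G,(C_0(T),\rt))$. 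Its $2\times2$ sub-blocks along the index sets $\{1,2\}$, $\{2,3\}$ and $\{1,3\}$ are full corners of $M$ equal to the linking algebras $L(X)$, $L(Y)$ and $L(X\otimes_B Y)$. Running the argument of Proposition~\ref{defgfpaib} for these full corners shows that $\Fix M$ has corresponding $2\times2$ sub-blocks $L(X)^{L(u)}$, $L(Y)^{L(v)}$ and $L(X\otimes_B Y)^{L(u\otimes v)}$, hence diagonal corners $\Fix A$, $\Fix B$, $\Fix C$ and off-diagonal corners $\Fix(X,u)$, $\Fix(Y,v)$ and $\Fix(X\otimes_B Y,u\otimes v)$. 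On the other hand, inside the $C^*$-algebra $\Fix M$ the projection onto the $(2,2)$ corner is full --- the inner products furnished by Proposition~\ref{defgfpaib} are full --- so the $(1,3)$ corner is the closed span of the product of the $(1,2)$ and $(2,3)$ corners; thus $\Fix(X\otimes_B Y,u\otimes v)=\clsp\{\,\xi\eta:\xi\in\Fix(X,u),\ \eta\in\Fix(Y,v)\,\}$, and, with the operations inherited from $\Fix M$, the assignment $\xi\otimes\eta\mapsto\xi\eta$ identifies this closed span with the balanced tensor product $\Fix(X,u)\otimes_{\Fix B}\Fix(Y,v)$. The left $\Fix A$-actions agree because each is multiplication by the $(1,1)$ corner inside $M(\Fix M)$. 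This gives (iii).

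Compatibility (ii) is proved by the same method, with one of the two bimodules replaced by the right-Hilbert bimodule underlying the relevant homomorphism, or else deduced from (iii) together with (i). I expect the genuine obstacle to be (iii), and specifically the density arguments needed to see that the module structure on the $(1,3)$ corner of $\Fix M$ really does reconstruct the balanced tensor product: this requires careful use of the norm-continuity of $a\mapsto E(fag)$, the fullness of the corners of $M$, and \cite[Lemmas~2.2 and~3.4]{kqrproper}. It is precisely here that the $3\times3$ trick earns its keep, by letting us compute $\Fix$ of all three linking algebras $L(X)$, $L(Y)$ and $L(X\otimes_B Y)$ simultaneously inside the single ambient $C^*$-algebra $\Fix M$.
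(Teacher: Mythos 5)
Your overall architecture --- factor each morphism via Corollary~\ref{factor}, use the functoriality of \cite{kqrproper} for composites of comma-category homomorphisms, handle a composite of two equivariant imprimitivity bimodules by a $3\times 3$ linking-algebra trick, and reassemble --- is the paper's, and your step (iii) is essentially Theorem~\ref{thm-lemma-b} together with Lemma~\ref{lem-lemma-e} (your fullness-of-$p_2$ argument for surjectivity is the same observation as Remark~\ref{rem-new-3x3}). The genuine gap is step (ii), which your collapsing argument cannot avoid: after rewriting $h_B\circ\iota_X$ as $\iota'\circ h'$, what you need is exactly $\Fix(\iota')\circ\Fix(h')=\Fix(h_B)\circ\Fix(\iota_X)$, i.e.\ an isomorphism $X^u\otimes_{B^\beta}\K(Y)^\notnu\cong(X\otimes_B\K(Y))^{u\otimes\notnu}$ of right-Hilbert bimodules, and neither of your two suggestions for proving it works. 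The $3\times3$ trick cannot be run ``with one of the two bimodules replaced by the right-Hilbert bimodule underlying the homomorphism'': the bimodule ${}_B\K(Y)_{\K(Y)}$ underlying $\kappa_B$ is not an imprimitivity bimodule, so in the would-be $3\times 3$ algebra the middle diagonal corner is the algebra of compacts on the module rather than $B$ (one cannot even form a two-sided linking algebra with $B$ in the $(2,2)$ slot unless $\kappa_B(B)\subseteq\K(Y)$), the off-diagonal corners are not imprimitivity bimodules, and the hypotheses of Lemma~\ref{lem-lemma-e} --- equivalently, fullness of all three projections --- fail; in particular the construction never even produces $B^\beta$. Nor does (ii) follow formally from (i) and (iii): (i) concerns composites of two nondegenerate homomorphisms and (iii) composites of two imprimitivity bimodules, and the mixed composite $h_B\circ\iota_X$ is of neither type; passing to its canonical decomposition $\iota'\circ h'$ merely restates (ii), since the claim to be proved is precisely that $\Fix$ assigns the same morphism to $\iota'\circ h'$ and to the pair $(\iota_X,h_B)$.

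The paper supplies this missing ingredient by a genuinely different argument (Proposition~\ref{prop-lemma-a}): using the creation and annihilation operators of Lemma~\ref{lem-basic} one builds a nondegenerate homomorphism $\Phi_L\colon L(X)\to M(L(X\otimes_B C))$ which is $L(u)$\,--\,$L(u\otimes\gamma)$ equivariant and $C_0(T)$-linear (here the compatibility $\psi\circ\phi_B=\phi_C$, automatic for $\psi=\kappa_B$, is essential), applies \cite[Proposition~2.6]{kqrproper} to get a homomorphism of the fixed-point linking algebras, and then extracts the isomorphism $X^u\otimes_{B^\beta}C^\gamma\cong(X\otimes_B C)^{u\otimes\gamma}$ from the corner decomposition (Lemma~\ref{lem-lemma-c}). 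Some such argument must be added to your proposal; without it the proof that $\Fix$ respects composition is incomplete. Two smaller points: the identification of your $3\times3$ algebra $M$ with $\K\bigl((X\otimes_BY)\oplus Y\oplus C\bigr)$, which is needed before Proposition~\ref{defgfpaib} can be applied to its $2\times2$ corners, itself requires the identifications $\K(Y,X\otimes_BY)\cong X$, $\K(X\otimes_BY,Y)\cong\widetilde X$ proved in the paper; and your final ``routine'' identification of $(\iota_Y\circ\iota')\circ(h'\circ h_A)$ with the canonical decomposition of $[X\otimes_BY,u\otimes v]$ needs an equivariant linking-algebra isomorphism argument (the paper's $L(\theta)$ step), though these are indeed routine by comparison with (ii).
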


Theorem~\ref{thm-fix} is the hardest result in this paper: it includes, for example, the assertion (proved as
Theorem~\ref{thm-lemma-b} in \S\ref{2ibs}) that if
$_{(A,\alpha)}(X,u)_{(B,\beta)}$ and $_{(B,\beta)}(Y,v)_{(C,\gamma)}$
are imprimitivity bimodules implementing isomorphisms in the
semi-comma category, then $(X\otimes_B Y)^{u\otimes v}$ is isomorphic
to $X^u\otimes_{B^\beta}Y^v$. We prove Theorem~\ref{thm-fix} in
\S\ref{pf-fixed}.

\begin{prop}\label{prop-rcp}
  Suppose a locally compact group $G$ acts freely and properly on a
  locally compact space $T$.  Then the assignments
  $(A,\alpha,\phi_A)\mapsto A\rtimes_{\alpha,r}G$ and $(X,u)\mapsto
  X\rtimes_{u,r}G$ give a functor $\RCP$ from $\Aa(G,(C_0(T),\rt))$ to
  $\CC$.
\end{prop}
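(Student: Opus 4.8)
The plan is to verify the three defining properties of a functor: that $\RCP$ is well defined, with morphisms landing in $\CC$; that it sends identities to identities; and that it respects composition. Since neither $A\rtimes_{\alpha,r}G$ nor $X\rtimes_{u,r}G$ involves the homomorphisms $\phi_A$, $\RCP$ is really the composite of the forgetful functor $\Aa(G,(C_0(T),\rt))\to\Aa(G)$ with the reduced-crossed-product construction on $\Aa(G)$, so I would work in $\Aa(G)$ and restore the $\phi$'s at the end. The first task is to see that $X\rtimes_{u,r}G$ is a morphism in $\CC$, i.e.\ a full right-Hilbert $(A\rtimes_{\alpha,r}G)$\,--\,$(B\rtimes_{\beta,r}G)$ bimodule whose left action comes from a nondegenerate homomorphism. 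Here I would use the canonical decomposition $[X,u]=[{}_{\K(X)}X_B,u]\,[\kappa_A]$ of Corollary~\ref{factor}: by the standard theory of reduced crossed products of imprimitivity bimodules (see \cite{enchilada}), the imprimitivity bimodule ${}_{\K(X)}X_B$ has a reduced crossed product $X\rtimes_{u,r}G$ that is a $(\K(X)\rtimes_{\mu,r}G)$\,--\,$(B\rtimes_{\beta,r}G)$ imprimitivity bimodule --- so the right inner product is full and $\K(X\rtimes_{u,r}G)=\K(X)\rtimes_{\mu,r}G$ --- while the nondegenerate equivariant homomorphism $\kappa_A\colon A\to M(\K(X))$ induces a nondegenerate homomorphism $\kappa_A\rtimes_r G\colon A\rtimes_{\alpha,r}G\to M(\K(X)\rtimes_{\mu,r}G)=\L(X\rtimes_{u,r}G)$ that supplies the left action. (That a nondegenerate equivariant homomorphism induces a homomorphism of \emph{reduced} crossed products is the fact underlying the functor $\RCP$ on the comma category in \cite{kqrproper}; one proves it by inducing a faithful representation of the target.) Well-definedness on isomorphism classes, and the fact that $[{}_AA_A,\alpha]$ maps to the identity morphism of $A\rtimes_{\alpha,r}G$, are then routine.

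The substantive point is that $\RCP$ respects composition: for $[X,u]\colon(A,\alpha)\to(B,\beta)$ and $[Y,v]\colon(B,\beta)\to(C,\gamma)$ one needs an isomorphism
\[
(X\otimes_B Y)\rtimes_{u\otimes v,r}G\;\cong\;(X\rtimes_{u,r}G)\otimes_{B\rtimes_{\beta,r}G}(Y\rtimes_{v,r}G)
\]
of right-Hilbert $(A\rtimes_{\alpha,r}G)$\,--\,$(C\rtimes_{\gamma,r}G)$ bimodules. The analogous identity for full crossed products, $(X\otimes_B Y)\rtimes_{u\otimes v}G\cong(X\rtimes_u G)\otimes_{B\rtimes_\beta G}(Y\rtimes_v G)$, is part of the known functoriality of the full crossed product on $\Aa(G)$ (\cite{taco,enchilada}), so I would deduce the reduced version from it. Let $R_B$ be the right-Hilbert $(B\rtimes_\beta G)$\,--\,$(B\rtimes_{\beta,r}G)$ bimodule which is $B\rtimes_{\beta,r}G$ over itself, with left action of $B\rtimes_\beta G$ via the regular representation $\Lambda_B\colon B\rtimes_\beta G\twoheadrightarrow B\rtimes_{\beta,r}G$; the canonical decomposition shows that $X\rtimes_{u,r}G$ is $(X\rtimes_u G)\otimes_{B\rtimes_\beta G}R_B$ with its left $A\rtimes_\alpha G$-action factored through $A\rtimes_{\alpha,r}G$. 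Granting this, the desired isomorphism follows from associativity of the balanced tensor product and the full-crossed-product identity, using that $R_B\otimes_{B\rtimes_{\beta,r}G}(Y\rtimes_{v,r}G)\cong Y\rtimes_{v,r}G$ because the left action of $B\rtimes_\beta G$ on $Y\rtimes_{v,r}G$ already factors through $B\rtimes_{\beta,r}G$.

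I expect the main obstacle to be precisely this last compatibility --- that ``reducing on the right'' of a bimodule automatically reduces its left action as well, which is what lets the tensor-product bookkeeping close up. It is harmless for imprimitivity bimodules, where it is just the statement that a reduced crossed product of an imprimitivity bimodule is an imprimitivity bimodule over the two reduced crossed products, and it transfers to a general $[X,u]$ through the canonical decomposition, since the left action of $A\rtimes_{\alpha,r}G$ factors as $\kappa_A\rtimes_r G$ followed by the already-reduced left action of $\K(X)\rtimes_{\mu,r}G$ on $X\rtimes_{u,r}G$. Finally, none of this disturbs the data $\phi_A,\phi_B,\phi_C$, so everything takes place in the semi-comma category. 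Unlike Theorem~\ref{thm-fix}, no linking-algebra or $3\times 3$ matrix argument is needed here: for reduced crossed products the required functoriality of homomorphisms and of imprimitivity bimodules is already available in the literature, which is why this is a proposition rather than the hard theorem.
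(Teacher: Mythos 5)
Your proposal is correct, and its opening observation --- that the morphisms of $\Aa(G,(C_0(T),\rt))$ are just those of $\Aa(G)$, so $\RCP$ is the reduced-crossed-product construction on $\Aa(G)$ precomposed with the forgetful functor --- is exactly the point of the paper's proof. Where you diverge is in what you do with that observation: the paper simply cites \cite[Theorem~3.7]{enchilada}, which already establishes that $(A,\alpha)\mapsto A\rtimes_{\alpha,r}G$, $(X,u)\mapsto X\rtimes_{u,r}G$ is a functor on (a category containing) $\Aa(G)$ with values in (a category containing) $\CC$, and notes that restricting to the semi-comma category changes nothing; you instead re-derive that reduced functoriality from the full-crossed-product functoriality of \cite{taco,enchilada}, using the canonical decomposition of Corollary~\ref{factor} together with the ``reducing'' bimodule $R_B$ and the two standard facts that a nondegenerate equivariant homomorphism descends to reduced crossed products and that the reduced crossed product of an imprimitivity bimodule is an imprimitivity bimodule over the two reduced crossed products (so that $\K(X\rtimes_{u,r}G)=\K(X)\rtimes_{\mu,r}G$ and fullness and nondegeneracy of the left action come for free). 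Your route is more self-contained and makes visible exactly why reduction is compatible with composition (the left action of $B\rtimes_\beta G$ on $Y\rtimes_{v,r}G$ already factors through $B\rtimes_{\beta,r}G$), which is genuinely the delicate point if one were to prove \cite[Theorem~3.7]{enchilada} from scratch; the cost is that you are reproving a cited result, and the identification $X\rtimes_{u,r}G\cong(X\rtimes_u G)\otimes_{B\rtimes_\beta G}R_B$ needs to be squared with whatever definition of the reduced crossed product of a bimodule one adopts (in \cite{enchilada} it is essentially this quotient, so this is a matter of matching conventions rather than a gap). The paper's citation-based proof buys brevity --- which is why this is stated as an easy proposition, with all the real work reserved for Theorem~\ref{thm-fix}.
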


\begin{proof}
  It follows from \cite[Theorem~3.7]{enchilada} that the assignments
  $(A,\alpha)\mapsto A\rtimes_{\alpha,r}G$ and $(X,u)\mapsto
  X\rtimes_{u,r}G$ give a functor from $\Aa(G)$ to $\CC$ (since
  $\Aa(G)$ and $\CC$ are subcategories of the categories considered in
  \cite[Theorem~3.7]{enchilada}).  After restricting to the semi-comma
  category we still have a functor.
\end{proof}

We can now state our main theorem, which extends \cite[Theorem~3.2]{kqrproper} to our
more general setting. We will prove it in \S\ref{pfthmmain}.

\begin{thm}\label{thm-main}
  Suppose a locally compact group $G$ acts freely and properly on a
  locally compact space $T$. Then Rieffel's Morita equivalences
  \[
  Z(A,\alpha,\phi):A\rtimes_{\alpha,r}G\to \Fix(A,\alpha,\phi)
  \]
  form a natural isomorphism between the functors
  $\RCP:\Aa(G,(C_0(T),\rt))\to \CC$ of Proposition~\ref{prop-rcp} and
  $\Fix:\Aa(G,(C_0(T),\rt))\to \CC$ of Theorem~\ref{thm-fix}.
\end{thm}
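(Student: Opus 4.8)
The plan is to verify the two defining properties of a natural isomorphism: that each $Z(A,\alpha,\phi)$ is an isomorphism in $\CC$, and that the family $\{Z(A,\alpha,\phi)\}$ is natural, i.e.\ for every morphism $[X,u]:(A,\alpha,\phi_A)\to(B,\beta,\phi_B)$ in $\Aa(G,(C_0(T),\rt))$ the square
\[
\begin{CD}
A\rtimes_{\alpha,r}G @>{Z(A,\alpha,\phi_A)}>> \Fix(A,\alpha,\phi_A)\\
@V{[X\rtimes_{u,r}G]}VV @VV{\Fix([X,u])}V\\
B\rtimes_{\beta,r}G @>>{Z(B,\beta,\phi_B)}> \Fix(B,\beta,\phi_B)
\end{CD}
\]
commutes in $\CC$. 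The first point is immediate: $Z(A,\alpha,\phi)$ is by construction an imprimitivity bimodule implementing Rieffel's Morita equivalence (from \cite[Theorem~5.7]{integrable}, \cite[Corollary~1.7]{proper}, and the identification $A^\alpha=\Fix(A,\alpha,\phi)$ of \cite[Proposition~3.1]{kqrproper}), and imprimitivity bimodules are precisely the invertible morphisms in $\CC$ by \cite[Proposition~2.6]{taco}. So the real content is the commuting square.

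To prove naturality I would exploit the canonical decomposition of Corollary~\ref{factor}: write $[X,u]=[{}_{\K(X)}X_B,u]\,[\kappa_A]$, where $[\kappa_A]$ comes from the morphism $\kappa_A:(A,\alpha,\phi_A)\to(\K(X),\mu,\phi_{\K(X)})$ in the comma category $(C_0(T),\rt)\downarrow\Aa_{\nd}(G)$ of \cite{kqrproper}, and $[{}_{\K(X)}X_B,u]$ is implemented by a $\K(X)$\,--\,$B$ imprimitivity bimodule. Since $\RCP$ and $\Fix$ are both functors (Proposition~\ref{prop-rcp} and Theorem~\ref{thm-fix}), and the definition of $\Fix([X,u])$ is itself given via this decomposition, it suffices to prove naturality separately for the two pieces, and then paste. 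For the piece $[\kappa_A]$ coming from a comma-category morphism, naturality is exactly \cite[Theorem~3.2]{kqrproper}: that theorem says $(A,\alpha,\phi)\mapsto Z(A,\alpha,\phi)$ is a natural isomorphism between $\RCP$ and $\Fix$ as functors on the comma category into $\CCnd$, and the image of a nondegenerate homomorphism under the canonical functor $\CCnd\to\CC$ (described via \cite[Proposition~2.3]{taco}) carries this over to $\CC$. So the only genuinely new work is the naturality square for a morphism $[{}_{K}X_B,u]$ implemented by an imprimitivity bimodule, where $\Fix$ of the morphism is computed through the linking-algebra recipe of Proposition~\ref{defgfpaib}.

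The heart of the argument is therefore the following identity of Morita equivalences: when $_{(K,\mu,\phi_K)}(X,u)_{(B,\beta,\phi_B)}$ is an imprimitivity bimodule in the semi-comma category, the tensor product
\[
X^u\otimes_{B^\beta}Z(B,\beta,\phi_B)\cong Z(K,\mu,\phi_K)\otimes_{K\rtimes_{\mu,r}G}(X\rtimes_{u,r}G)
\]
as $\Fix(K,\mu,\phi_K)$\,--\,$(B\rtimes_{\beta,r}G)$ imprimitivity bimodules, compatibly with the identifications. I would prove this by passing to the linking algebra: form $(L(X),L(u),\phi)$ with $\phi=\phi_K\oplus\phi_B$, apply \cite[Theorem~3.2]{kqrproper}-type reasoning (or rather the already-established comma-category naturality plus the identities $L(X)^{L(u)}=L(X^u)$ and $L(X)\rtimes_{L(u),r}G=L(X\rtimes_{u,r}G)$) to the identity morphism on $L(X)$, and then read off the $(1,2)$-corner. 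Concretely, one checks that $Z(L(X),L(u),\phi)$, whose $(1,1)$- and $(2,2)$-corners are $Z(K,\mu,\phi_K)$ and $Z(B,\beta,\phi_B)$ by the corner argument of Proposition~\ref{defgfpaib} together with \cite[Lemma~3.4]{kqrproper}, has $(1,2)$-corner equal both to $X^u\otimes_{B^\beta}Z(B,\beta,\phi_B)$ and to $Z(K,\mu,\phi_K)\otimes_{K\rtimes_{\mu,r}G}(X\rtimes_{u,r}G)$. Matching the two descriptions of the corner on the dense subalgebra $L(X)_0$ and using norm continuity of $R\mapsto E^{L(X)}(fRg)$ (as in the proof of Proposition~\ref{defgfpaib}) gives the required isomorphism.

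The main obstacle I anticipate is precisely this corner-matching for the imprimitivity-bimodule piece: keeping careful track, on the level of the dense subalgebras $L(X)_0$, of how the two candidate tensor products sit inside $Z(L(X),L(u),\phi)$, and verifying that the natural identifications of $(1,1)$- and $(2,2)$-corners with $Z(K,\mu,\phi_K)$ and $Z(B,\beta,\phi_B)$ are compatible with the left/right actions and inner products in both descriptions. This is exactly the technology — linking algebras plus (in the two-bimodule composition case) the $3\times 3$ matrix trick of \S\ref{2ibs} — that the authors flag as the delicate part of Theorem~\ref{thm-fix}; once $\Fix$ is known to be a functor, the present theorem follows by assembling the comma-category naturality of \cite{kqrproper} with this one imprimitivity-bimodule computation, so the proof of Theorem~\ref{thm-main} should be comparatively short modulo the results already in hand.
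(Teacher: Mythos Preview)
Your proposal is correct and follows essentially the same route as the paper: factor the morphism via Corollary~\ref{factor}, invoke \cite[Theorem~3.2]{kqrproper} for the nondegenerate-homomorphism piece, and handle the imprimitivity-bimodule piece by passing to the linking algebra using the identifications $L(X)\rtimes_{L(u),r}G\cong L(X\rtimes_{u,r}G)$ and $L(X)^{L(u)}=L(X^u)$ from Proposition~\ref{defgfpaib}. The one difference is that the corner-matching you plan to do by hand is exactly the content of \cite[Lemma~4.6]{ER}, which the paper simply cites; your expectation that the argument is short once Theorem~\ref{thm-fix} is in place is borne out.
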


\section{Proofs of the main theorems}\label{sec-proofs}

The main problem in proving Theorem~\ref{thm-fix} is to prove that $\Fix$ respects composition. To help
understand the issues which arise, we start the proof. Suppose that
${}_{(A,\alpha)}(X,u)_{(B,\beta)}$ and
${}_{(B,\beta)}(Y,v)_{(C,\gamma)}$ are right-Hilbert bimodules. We
need to prove that $\Fix(X\otimes_BY,u\otimes v)$ and
$\Fix(X,u)\otimes_{B^\beta}\Fix(Y,v)$ are isomorphic as right-Hilbert
$A^\alpha$\,--\,$C^\gamma$ bimodules. To compute
$\Fix(X,u)\otimes_{B^\beta}\Fix(Y,v)$, we factor $(X,u)$ and $(Y,v)$
as in Corollary~\ref{factor}, and apply $\Fix$ to each
factor. Proposition~2.6 of \cite{kqrproper} gives nondegenerate
homomorphisms $\kappa_A|$ and $\kappa_B|$,
Proposition~\ref{defgfpaib} gives imprimitivity bimodules $X^u$ and
$Y^v$, and then $\Fix(X,u)\otimes_{B^\beta}\Fix(Y,v)$ is  the composition
\begin{equation}\label{eqfunct1}
  \xymatrix{
    A^\alpha
    \ar[r]^-{\kappa_A|}
    &\K(X)^\mu
    \ar[r]^-{X^u}
    &B^\beta
    \ar[r]^-{\kappa_B|}
    &\K(Y)^\notnu
    \ar[r]^-{Y^v}
    &C^\gamma.
  }
\end{equation}
To compare this to the canonical form of $\Fix(X\otimes_B Y,u\otimes v)$, we
need to first realise $X^u\otimes_{B^\beta}\K(Y)^\notnu$ in canonical
form, which we do in \S\ref{hmib}. This gives us a sequence with two
homomorphisms on the left, which compose easily, and two imprimitivity
bimodules $Z^w$, say, and $Y^v$ on the right. We then deal with tensor
products of the form $Z^w\otimes_{K^\mu}Y^\notnu$ in \S\ref{2ibs}. The
proof of Theorem~\ref{thm-fix} itself is in \S\ref{pf-fixed}.

\subsection{Composing a homomorphism with an imprimitivity
  bimodule}\label{hmib} The object of this subsection is to prove the following proposition.

\begin{prop}
  \label{prop-lemma-a}
  Suppose that $(K,\mu,\phi_K)$, $(B,\beta,\phi_B)$ and $(C,\gamma,
  \phi_C)$ are objects in the semi-comma category $\Aa(G,
  (C_0(T),\rt))$, that ${}_{(K,\mu)}(X,u)_{(B,\beta)}$ is an
  imprimitivity bimodule, and that $\psi:(B,\beta)\to
  M(C,\gamma)$ is a nondegenerate homomorphism satisfying $\psi\circ\phi_B=\phi_C$.  Then
  \begin{equation*}
    X^{u}\tensor_{B^{\beta}}C^{\gamma}\cong (X\tensor_{B}C)^{u\tensor\gamma}
  \end{equation*}
  as right-Hilbert $K^\mu$\,--\,$ C^{\gamma}$ bimodules.
\end{prop}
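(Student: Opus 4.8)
The plan is to realise both sides as corners of the same linking-algebra fixed-point construction and read off the isomorphism from an identity at the linking-algebra level, exactly as in the proof of Proposition~\ref{defgfpaib} and Lemma~\ref{lem-well-defined}. First I would form the linking algebra $L(X)$ with its $G$-action $L(u)$ and diagonal embedding $\phi_{L(X)}=\phi_K\oplus\phi_B$, so that $(L(X),L(u),\phi_{L(X)})$ is an object of the semi-comma category with $\K$ in the $p$-corner, $B$ in the $q$-corner, and $X$ in the off-diagonal corner; Proposition~\ref{defgfpaib} already tells us $L(X)^{L(u)}=L(X^u)$ with $X^u=pL(X)^{L(u)}q$. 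On the other side, $X\tensor_B C$ has linking algebra $L(X\tensor_B C)$, and since $\psi$ is a nondegenerate equivariant homomorphism with $\psi\circ\phi_B=\phi_C$, the map $\Psi:=\big(\begin{smallmatrix}\id_K & \id_X\tensor 1\\ *&\psi\end{smallmatrix}\big)$ is a nondegenerate equivariant homomorphism from $(L(X),L(u),\phi_K\oplus\phi_B)$ to $M(L(X\tensor_B C),L(u\tensor\gamma),\phi_K\oplus\phi_C)$ — this is just the linking-algebra incarnation of the statement that tensoring with $C$ along $\psi$ is a morphism in $\Aca$-land. So I am in the situation of the comma category $(C_0(T),\rt)\downarrow \Aa_{\nd}(G)$: a nondegenerate equivariant homomorphism between two objects there.

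Now apply \cite[Proposition~2.6]{kqrproper}: the restriction $\Psi|$ is a nondegenerate homomorphism $L(X)^{L(u)}\to M(L(X\tensor_B C)^{L(u\tensor\gamma)})$, and since $L(X)^{L(u)}=L(X^u)$ and $L(X\tensor_B C)^{L(u\tensor\gamma)}=L((X\tensor_B C)^{u\tensor\gamma})$, the homomorphism $\Psi|$ is exactly the linking-algebra map built from the three corner maps $\id_{K^\mu}$, some map $X^u\to M((X\tensor_B C)^{u\tensor\gamma})$, and $\psi|:B^\beta\to M(C^\gamma)$. The right-Hilbert bimodule underlying the morphism $[\Psi|]$ is therefore, by the usual corner description, the $\K(X^u)$--$C^\gamma$ bimodule $X^u\tensor_{B^\beta}C^\gamma$ with its left action through $\id_{K^\mu}$. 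But by construction $[\Psi]$ on the semi-comma category is the morphism $[{}_{\K(X)}X_B,u]$ followed by $[\psi]$, i.e.\ it represents $X\tensor_B C$ with its canonical decomposition $[{}_{\K(X)}(X\tensor_B C)_C,u\tensor\gamma]\,[\id]$; applying $\Fix$ via the formula \eqref{defFixmor} gives precisely $(X\tensor_B C)^{u\tensor\gamma}$ on the nose. Tracking the left actions (both are through $\kappa_K|=\id$) shows the two right-Hilbert $K^\mu$--$C^\gamma$ bimodules coincide, so the desired isomorphism is an equality of modules arising from the single map $\Psi|$.

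The main obstacle I anticipate is not the abstract bookkeeping but the concrete verification that the off-diagonal corner map coming from $\Psi|$ really is the canonical identification $X^u\tensor_{B^\beta}C^\gamma\to(X\tensor_B C)^{u\tensor\gamma}$, rather than merely \emph{some} imprimitivity-bimodule map. Because $\Psi$ has a genuinely nontrivial off-diagonal corner ($\id_X\tensor 1$ sends $x\in X$ into $M(X\tensor_B C)$), I need to check that the expectation $E^{L(X\tensor_B C)}$ restricted to this corner computes, on elementary tensors $pR_ipS_iq$ with $R_i,S_i\in L(X)_0$, the same thing as first applying $E^{L(X)}$ and then $\Psi$ — this is the linking-algebra analogue of the identity used in the proof of Proposition~\ref{defgfpaib}, and it rests on the fact that $E^{L(X\tensor_B C)}\circ\Psi=\Psi\circ E^{L(X)}$ on $L(X)_0$, which in turn follows from \eqref{defE} together with the equivariance $\Psi\circ L(u)_s=L(u\tensor\gamma)_s\circ\Psi$ and nondegeneracy of $\Psi$. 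Once that intertwining of expectations is in hand, fullness of both inner products is automatic from Proposition~\ref{defgfpaib} applied to $X$ and to $X\tensor_B C$, and the isomorphism of $K^\mu$--$C^\gamma$ bimodules follows by restricting $\Psi|$ to the three corners exactly as in Lemma~\ref{lem-well-defined}.
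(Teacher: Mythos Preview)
Your approach coincides with the paper's: construct a nondegenerate equivariant $C_0(T)$-linear homomorphism $\Phi_L\colon L(X)\to M(L(X\otimes_B C))$ (the paper writes out its four corners explicitly via the creation and annihilation operators of Lemma~\ref{lem-basic}, so your top-left entry is the induced map $K\to\L(X\otimes_B C)$ rather than literally $\id_K$), apply \cite[Proposition~2.6]{kqrproper} to obtain $\Phi_L|\colon L(X^u)\to M\bigl(L((X\otimes_B C)^{u\otimes\gamma})\bigr)$, and then read off the desired isomorphism from the corners. The ``usual corner description'' you invoke is isolated in the paper as Lemma~\ref{lem-lemma-c}, which shows that any corner-preserving nondegenerate homomorphism between linking algebras yields the isomorphism $\Omega\colon X^u\otimes_{B^\beta}C^\gamma\to (X\otimes_B C)^{u\otimes\gamma}$ via $\Omega(\xi\otimes c)=(\Phi_L|)_X(\xi)\cdot c$; this is the step your second paragraph gestures at, and once it is stated cleanly there is no need to route through $\Fix$ or \eqref{defFixmor}.
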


We begin the proof of Proposition~\ref{prop-lemma-a} with two lemmas.

\begin{lemma}
  \label{lem-basic}
  Let $X$ be a right Hilbert $B$-module and $Y$ a
  right-Hilbert $B$\,--\,$C$ bimodule.  For $x\in X$, define the \emph{creation
  operator} $C_{x}:Y\to X\otimes_{B}Y$ by $C_{x}(y):=x\otimes y$ and the \emph{annihilation operator}
  $A_{x}:X\otimes_{B}Y\to Y$ by
  $A_{x}(x'\otimes y):=\rip B<x,x'>\cdot y$.  Then
  $C_{x}$ is adjointable with adjoint $C_{x}^{*}=A_{x}\in
  \mathcal{L}(X\tensor_{B}Y,Y)$, and $\|C_{x}\|=\|A_{x}\|\le
  \|x\|$ for all $x\in X$, with equality if $\kappa_B:B\to \mathcal{L}(Y)$ is injective.
\end{lemma}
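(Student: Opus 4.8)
The plan is to settle everything about $C_x$ first and then recover the properties of $A_x$ from the adjoint identity, rather than verifying them directly. First I would show that $C_x$ is bounded with $\|C_x\|\le\|x\|$: since $\kappa_B\colon B\to\L(Y)$ is a $*$-homomorphism it is positive and norm-decreasing, so $\kappa_B(\langle x,x\rangle_B)\le\|x\|^2\id_Y$ in $\L(Y)$, and hence for every $y\in Y$
\[
\langle C_xy,C_xy\rangle_C=\langle x\otimes y,x\otimes y\rangle_C=\langle y,\langle x,x\rangle_B\cdot y\rangle_C=\langle y,\kappa_B(\langle x,x\rangle_B)y\rangle_C\le\|x\|^2\langle y,y\rangle_C ;
\]
taking norms gives $\|C_xy\|\le\|x\|\,\|y\|$.

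Next I would check that the defining formula for $A_x$ respects the balancing relation $x'b\otimes y=x'\otimes b\cdot y$ --- immediate from $\langle x,x'b\rangle_B\cdot y=\langle x,x'\rangle_B\, b\cdot y$ --- so that $A_x$ is a well-defined linear map on the dense subspace of $X\otimes_B Y$ spanned by elementary tensors, and on such tensors a short computation with the interior tensor product inner product gives
\[
\langle C_xy,x'\otimes y'\rangle_C=\langle x\otimes y,x'\otimes y'\rangle_C=\langle y,\langle x,x'\rangle_B\cdot y'\rangle_C=\langle y,A_x(x'\otimes y')\rangle_C .
\]
To upgrade $A_x$ to a bounded operator without circularity, for $\xi$ in that dense subspace I would substitute $y=A_x\xi$ into the identity $\langle C_xy,\xi\rangle_C=\langle y,A_x\xi\rangle_C$ and apply Cauchy--Schwarz for the $C$-valued inner product together with the bound $\|C_x\|\le\|x\|$ already obtained:
\[
\|A_x\xi\|^2=\|\langle A_x\xi,A_x\xi\rangle_C\|=\|\langle C_x(A_x\xi),\xi\rangle_C\|\le\|x\|\,\|A_x\xi\|\,\|\xi\| ,
\]
so $\|A_x\xi\|\le\|x\|\,\|\xi\|$ on the dense subspace. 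Hence $A_x$ extends to a bounded operator on $X\otimes_B Y$ with $\|A_x\|\le\|x\|$, the identity $\langle C_xy,\xi\rangle_C=\langle y,A_x\xi\rangle_C$ extends by continuity to all $\xi\in X\otimes_B Y$, and the standard criterion for adjointability (a map between Hilbert modules possessing an everywhere-defined algebraic adjoint lies in $\L$) shows that $C_x\in\L(Y,X\otimes_B Y)$ with $C_x^{*}=A_x$; in particular $\|C_x\|=\|A_x\|\le\|x\|$.

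For the last assertion, when $\kappa_B$ is injective it is isometric, so with $S:=\kappa_B(\langle x,x\rangle_B)\in\L(Y)_{+}$ we have $\|S\|=\|\langle x,x\rangle_B\|=\|x\|^2$; using $\langle y,Sy\rangle_C=\langle S^{1/2}y,S^{1/2}y\rangle_C$ together with the computation of $\langle C_xy,C_xy\rangle_C$ above,
\[
\|C_x\|^2=\sup_{\|y\|\le1}\|\langle y,Sy\rangle_C\|=\sup_{\|y\|\le1}\|S^{1/2}y\|^2=\|S^{1/2}\|^2=\|S\|=\|x\|^2 ,
\]
so $\|C_x\|=\|x\|$. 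The one step I expect to be the real obstacle is the boundedness of $A_x$: the temptation is to define $A_x$ as a bounded operator at the outset, which cannot be done directly; the fix is to prove the bound on $C_x$ independently and then read the bound on $A_x$ off the algebraic adjoint relation, after which the rest is routine bookkeeping with the interior tensor product.
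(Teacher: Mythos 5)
Your proposal is correct and follows essentially the same route as the paper: the key step in both is the computation $\langle C_x y, x'\otimes y'\rangle_C=\langle y, A_x(x'\otimes y')\rangle_C$ on elementary tensors, followed by the identification $C_x^*C_x=\kappa_B(\langle x,x\rangle_B)$ (your $S^{1/2}$ computation is just the $C^*$-identity $\|C_x\|^2=\|C_x^*C_x\|$ unwound, together with the fact that an injective $*$-homomorphism is isometric). The only difference is that you spell out the well-definedness and boundedness of $A_x$ on the dense span before invoking adjointability, details the paper's terser proof leaves implicit.
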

\begin{proof}
For $x,x'\in X$ and $y,y'\in Y$ we have
  \begin{equation*}
    \btrip C<C_{x}(y),x'\tensor y'> = \brip C<y,{\rip B<x,x'>}\cdot
    y'> = \brip C<y,A_{x}(x'\tensor y')>,
  \end{equation*}
which implies that $C_x$ is adjointable with $C_{x}^{*}=A_{x}$. Then $C_{x}^{*}C_{x}(y)=\rip B<x,x>\cdot
  y$, so $C_x^*C_x=\kappa_B(\rip B<x,x>)$, and since $\|C_x\|^2=\|C_x^*C_x\|$, the result follows.
\end{proof}

\begin{lemma}
  \label{lem-lemma-c} Let $X$ be a right Hilbert $B$-module and $Y$ a
  right Hilbert $C$-module, and let $p_X, q_X, p_Y, q_Y$ be the corner
  projections in $M(L(X))$ and $M(L(Y))$,
  respectively.  Suppose that $\Psi:L(X)\to M(L(Y))$ is a
  nondegenerate homomorphism satisfying $\Psi(p_X)=p_Y$ and
  $\Psi(q_X)=q_Y$. Then there are nondegenerate homomorphisms
  $\Psi_{\K}:\K(X)\to \L(Y)$ and $\Psi_B:B\to M(C)$, and a
  $\Psi_{\K}$\,--\,$\Psi_B$ compatible linear map $\Psi_X:X\to M(Y)$,
  such that
  \[
  \Psi\begin{pmatrix}
    k&x\\ *&b
  \end{pmatrix}=
  \begin{pmatrix}
    \Psi_{\K}(k)&\Psi_X(x)\\ *&\Psi_B(b)
  \end{pmatrix}.
  \]
  The map $\Omega(x\tensor c):=\Psi_X(x)\cdot c$ extends to an
  isomorphism $\Omega: X\tensor_{B}C\to Y$ of right-Hilbert
  $\K(X)$\,--\,$C$ bimodules.
\end{lemma}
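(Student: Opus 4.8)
The plan is to recover the three component maps by compressing $\Psi$ with the corner projections of $M(L(Y))$, to read off the matrix form from multiplicativity, and then to build $\Omega$ out of the off-diagonal component $\Psi_X$. Since $\Psi$ is nondegenerate it extends uniquely to a unital homomorphism $\bar\Psi\colon M(L(X))\to M(L(Y))$, still with $\bar\Psi(p_X)=p_Y$ and $\bar\Psi(q_X)=q_Y$. Using the standard corner identifications $\L(X)=p_XM(L(X))p_X$, $M(B)=q_XM(L(X))q_X$, $M(X)=p_XM(L(X))q_X$ and their counterparts for $Y$, compressing $\bar\Psi$ produces a unital homomorphism $\Psi_{\K}\colon\L(X)\to\L(Y)$, a unital homomorphism $\Psi_B\colon M(B)\to M(C)$ and a linear map $\Psi_X\colon M(X)\to M(Y)$; restricting these to $\K(X)$, $B$ and $X$ gives the maps in the statement, and $\Psi_{\K}$ and $\Psi_B$ are nondegenerate exactly because they extend to unital homomorphisms on the multiplier algebras. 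Writing an element of $L(X)$ as the sum of its four corner pieces $p_X(\,\cdot\,)p_X$, $p_X(\,\cdot\,)q_X$, $q_X(\,\cdot\,)p_X$, $q_X(\,\cdot\,)q_X$ and applying the multiplicative map $\Psi$ shows each piece lands in the corresponding corner of $M(L(Y))$, which is the claimed $2\times 2$ matrix decomposition; the same computation, together with the fact that $*$ interchanges the two off-diagonal corners, shows that $\Psi_{\K}$ and $\Psi_B$ are $*$-homomorphisms and that $\Psi_X$ is $\Psi_{\K}$\,--\,$\Psi_B$ compatible, i.e.\ $\Psi_X(k\cdot x\cdot b)=\Psi_{\K}(k)\Psi_X(x)\Psi_B(b)$ and $\langle\Psi_X(x),\Psi_X(x')\rangle=\Psi_B(\langle x,x'\rangle_B)$.

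To construct $\Omega$, I would set $\Omega(x\otimes c):=\Psi_X(x)\cdot c$ on elementary tensors; this lies in $Y$ because $\Psi_X(x)\in M(Y)$ and $M(Y)C\subseteq Y$. It is balanced over $B$ --- with $X\otimes_B C$ formed using the left action $b\cdot c:=\Psi_B(b)c$ --- since $\Psi_X(x\cdot b)c=\Psi_X(x)\Psi_B(b)c=\Psi_X(x)(b\cdot c)$, and it intertwines the left $\K(X)$-action (through $\Psi_{\K}$) and the right $C$-action by the compatibility relations. The inner-product relation gives
\[
\langle\Omega(x\otimes c),\Omega(x'\otimes c')\rangle_C
=c^*\Psi_B(\langle x,x'\rangle_B)c'
=\langle c,\langle x,x'\rangle_B\cdot c'\rangle_C
=\langle x\otimes c,\,x'\otimes c'\rangle_{X\otimes_B C},
\]
so $\Omega$ is isometric, hence extends to an isometric right-Hilbert-bimodule map $\Omega\colon X\otimes_B C\to Y$ with closed range $\overline{\Psi_X(X)C}$.

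The one step with genuine content is surjectivity, which amounts to transporting the nondegeneracy of $\Psi$ through the off-diagonal corner. I would show that $\overline{\Psi_X(X)C}\supseteq\Psi_{\K}(\K(X))Y$, which is dense in $Y$ because $\Psi_{\K}$ is nondegenerate and $\K(Y)Y=Y$. For the inclusion, realising a rank-one operator $\Theta_{x,x'}\in\K(X)$ as a product of off-diagonal matrix entries in $L(X)$ and applying $\Psi$ gives, after a short computation in $M(L(Y))$, that $\Psi_{\K}(\Theta_{x,x'})\cdot y=\Psi_X(x)\cdot\langle\Psi_X(x'),y\rangle_C$ for every $y\in Y$, so $\Psi_{\K}(\K(X))Y\subseteq\overline{\Psi_X(X)C}$; since such operators are dense in $\K(X)$ this completes the argument. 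Thus $\Omega$ is onto, and, intertwining the $\K(X)$- and $C$-actions and the $C$-valued inner products, it is an isomorphism of right-Hilbert $\K(X)$\,--\,$C$ bimodules. I expect the main obstacle to be nothing more than keeping the multiplier-module bookkeeping straight --- pinning down $M(Y)=p_YM(L(Y))q_Y$ and the module actions and pairings it carries --- since everything else is a formal consequence of $\Psi$ being a nondegenerate $*$-homomorphism; the one place an idea is needed is the surjectivity argument, and routing it through nondegeneracy of $\Psi_{\K}$ seems the cleanest option.
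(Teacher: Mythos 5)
Your proposal is correct in substance and, modulo one justification, reaches the same endpoint as the paper by a more self-contained route: where the paper simply cites \cite[Lemma~1.52]{enchilada} both for the existence of the corner maps $\Psi_{\K}$, $\Psi_B$, $\Psi_X$ and for the density of $\Psi_X(X)\cdot C$ in $Y$, you reconstruct these facts by hand (extend $\Psi$ to $M(L(X))$, compress by the corner projections, and prove surjectivity of $\Omega$ via a rank-one computation). The inner-product calculation and the verification that $\Omega$ intertwines the $\K(X)$- and $C$-actions coincide with what the paper does explicitly. So your version buys independence from the cited lemma at the cost of the multiplier-algebra bookkeeping you flag.

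The one step whose justification is too quick is the claim that $\Psi_{\K}$ (and $\Psi_B$) are nondegenerate \emph{because} they extend to unital homomorphisms on the multiplier algebras: that implication is false in general --- the zero map on $c_0$ extends to a unital homomorphism $\ell^\infty\to\C$ via a limit along a free ultrafilter --- and to make it work here you would have to invoke strict continuity of the canonical extension $\bar\Psi$ on bounded sets together with the (true but not free) fact that the relative strict topology on the corner $p_YM(L(Y))p_Y$ agrees on bounded sets with the strict topology of $\L(Y)=M(\K(Y))$. You can bypass this entirely: compressing the nondegeneracy relation $\clsp\{\Psi(L(X))L(Y)\}=L(Y)$ by $p_Y(\cdot)q_Y$, and using that $p_XL(X)$ is the top row $\K(X)+X$ of $L(X)$, shows that $Y$ is the closed span of $\Psi_{\K}(\K(X))\cdot Y$ together with $\Psi_X(X)\cdot C$; your identity $\Psi_{\K}(\Theta_{x,x'})\cdot y=\Psi_X(x)\cdot\langle\Psi_X(x'),y\rangle_C$ then places the first set inside $\overline{\Psi_X(X)\cdot C}$, so $\overline{\Psi_X(X)\cdot C}=Y$ --- which is exactly the density statement the paper extracts from \cite[Lemma~1.52]{enchilada} --- and surjectivity of $\Omega$ follows without ever needing nondegeneracy of $\Psi_{\K}$ itself.
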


\begin{proof}
The existence of $\Psi_{\K}$, $\Psi_B$ and $\Psi_X$ is proved in \cite[Lemma~1.52]{enchilada}; since $\Psi$ is nondegenerate,
  \cite[Lemma~1.52]{enchilada} also implies that $\overline{\Psi_X(X)\cdot C}=Y$, and thus $\Omega$ has dense
  range.  For $x_1,x_2\in X$ and $c_1,c_2\in C$ we have
  \begin{align*}
    \trip C< x_1\otimes c_1,x_2\otimes c_2>
    &=\langle\langle x_2\,,\, x_1\rangle_B\cdot c_1\,,\, c_2\rangle_C=\langle \Psi_B(\langle x_2\,,\, x_1\rangle_B)c_1\,,\, c_2\rangle_C\\
    &=\langle\langle\Psi_X(x_2)\,,\,\Psi_X(x_1)\rangle_Cc_1\,,\, c_2\rangle_C=c_1^*\langle\Psi_X(x_2)\,,\,\Psi_X(x_1)\rangle_C^*c_2\\
    &=\langle\Psi_X(x_1)\cdot c_1\,,\,\Psi_X(x_2)\cdot c_2\rangle_C,
  \end{align*}
  so $\Omega$ is inner-product preserving. It is clearly a $C$-module homomorphism, and is a $\K(X)$-module homomorphism because $\Psi_X(k\cdot x)=\Psi_{\K}(k)\cdot\Psi_X(x)$.
  \end{proof}

\begin{remark}\label{idaction}
In the proof of Proposition~\ref{prop-lemma-a} we make a lot of identifications, and it will be helpful to get these straight before we begin. Suppose $(Z,v)$ is a right Hilbert $(C,\gamma)$-module. Then its linking algebra $L(Z)$ carries an action $L(v)$; in the top left-hand corner of $L(v)$ is the action $\mu$ on $\K(Z)$, which can be described on an operator $k\in \K(Z)$ as $\mu_s(k)=v_s\circ k\circ v_s^{-1}$ (see the top of page 292 in \cite{combes}). When we identify $L(Z)$ with $\K(Z\oplus C)$, the action $L(v)$ is given by
\[
L(v)_s(K)=(v_s\oplus \gamma_s)\circ K\circ(v_s^{-1}\oplus\gamma_s^{-1})\ \text{ for $K\in \K(Z\oplus C)$.}
\]
The automorphisms $L(v)_s$ extend to multipliers $m\in M(\K(Z\oplus C))$ according to the formula $L(v)_s(m)K=L(v)_s(mL(v)_s^{-1}(K))$, and when we identify $\L(Z\oplus C)$ with the multiplier algebra of $\K(Z\oplus C)=L(Z)$, $L(v)$ is given by
\[
L(v)_s(R)=(v_s\oplus \gamma_s)\circ R\circ(v_s^{-1}\oplus\gamma_s^{-1})\ \text{ for $R\in \L(Z\oplus C)$.}
\]
On elements $T$ in the corner $\L(C,Z)=p\L(Z\oplus C)q$, for example, $L(v)_s$ is given by
\begin{align*}
L(v)_s\big(\big(\begin{smallmatrix}0&T\\0&0\end{smallmatrix}\big)\big)
=(v_s\oplus \gamma_s)\circ \big(\begin{smallmatrix}0&T\\0&0\end{smallmatrix}\big)\circ(v_s^{-1}\oplus\gamma_s^{-1}) =\big(\begin{smallmatrix}0&v_s\circ T\circ\gamma_s^{-1}\\0&0\end{smallmatrix}\big).
\end{align*}
\end{remark}

\begin{proof}
  [Proof of Proposition~\ref{prop-lemma-a}]
  We will define $\Phi_{L}:L(X_{B})\to M\bigl(L(X\tensor_{B}C)\bigr)$ by
  identifying $M\bigl(L(X\tensor_{B}C)\bigr)$ with
\[
  \mathcal{L}((X\tensor_{B}C)\oplus C)=\begin{pmatrix}\L(X\otimes_B C)&\L(C,X\otimes_BC)\\
  \L(X\otimes_BC,C)&\L(C)\end{pmatrix},
  \]
as in Remark~\ref{idaction} with $Z=X\otimes_B C$ and $v=u\oplus\gamma$, and defining $\Phi_L=(\Phi_L^{ij})$ on the corners.
We will then show that $\Phi_L$ is  a morphism in the comma
  category $(C_0(T),\rt)\downarrow\Aa_{\nd}(G)$, and apply \cite[Proposition~2.6]{kqrproper} to $\Phi_L$.

  For $a\in K, b\in B, c\in C$ and $x\in X$ define
  \begin{align*}
    &\Phi_{L}^{11}:K\to \L(X\otimes_B C)\ \text{ by }\ \Phi_{L}^{11}(a)(x\tensor c)=a\cdot x\tensor c,\\
    &\Phi_{L}^{12}: X\to \L(C,X\otimes_B C)\ \text{ by }\ \Phi_{L}^{12}(x)=C_{x},\\
    &\Phi_{L}^{21}:\widetilde X\to\L(X\otimes_B C, C)\ \text{ by }\ \Phi_{L}^{21}(\flat(x))=A_{x},\ \text{ and}\\
    &\Phi_{L}^{22}:B\to\L (C)\ \text{ by }\ \Phi_{L}^{22}(b)(c)=\psi(b)(c),
  \end{align*}
  where $C_x$ and $A_x$ are the creation and annihilation
  operators of Lemma~\ref{lem-basic}.  To see that $\Phi_{L}$ is
  a homomorphism, we check what happens on the four corners.
  Let
  \[
  m_1=\begin{pmatrix}
    a_1&x_1\\\flat(y_1)&b_1 \end{pmatrix}\quad\text{and}\quad
  m_2=\begin{pmatrix} a_2&x_2\\\flat(y_2)&b_2 \end{pmatrix}\in L(X).
  \]

The top left-hand corner of $\Phi_L(m_1)\circ\Phi_L(m_2)$ applied to $x_3\otimes c\in X\otimes_B C$ is
  \begin{align*}
    \big(\Phi_L^{11}(a_1)\circ \Phi_L^{11}(a_2)+\Phi_L^{12}(x_1)
    \circ&\Phi_L^{21}(\flat(y_2))\big)(x_3\otimes
    c)\\
    &=(a_1a_2)\cdot x_3\otimes c+C_{x_1}(A_{y_2}(x_3\otimes c))\\
    &=(a_1a_2)\cdot x_3\otimes c+x_1\otimes\langle y_2\,,\,
    x_3\rangle_B\cdot c\\
    &=(a_1a_2)\cdot x_3\otimes c+x_1\cdot \langle y_2\,,\,
    x_3\rangle_B\otimes c\\
    &=(a_1a_2)\cdot x_3\otimes c+{}_K\langle x_1\,,\, y_2\rangle\cdot
    x_3\otimes c\\
    &=\Phi_L^{11}(a_1a_2+{}_K\langle x_1\,,\,
    y_2\rangle)(x_3\otimes c),
  \end{align*}
  which is the top left-hand corner of $\Phi_L(m_1m_2)$ applied to $x_3\otimes c$.
The top right-hand corner of $\Phi_L(m_1)\circ \Phi_L(m_2)$ applied to $c\in C$ is
  \begin{align*}
    \big(
    \Phi_L^{11}(a_1)\circ\Phi_L^{12}(x_2)+\Phi_L^{12}(x_1)\circ\Phi_L^{22}(b_2)
    \big)(c)
    &=\Phi_L^{11}(a_1)(x_2\otimes c)+\Phi_L^{12}(x_1)(\psi(b_2)c)\\
    &=a_1\cdot x_2\otimes c+x_1\otimes\psi(b_2)c\\
    &=a_1\cdot x_2\otimes c+x_1\cdot b_2\otimes c\\
    &=\Phi_L^{12}(a_1\cdot x_2+x_1\cdot b_2)(c),
  \end{align*}
which is the top right-hand corner of $\Phi_L(m_1m_2)$ applied to $c$.
  The calculations for the two bottom corners are similar. So $\Phi_L$
  is a homomorphism.  Since $\Phi_{L}^{12}(X)(C)$ spans a dense subset
  of $X\tensor_{B}C$, it follows from \cite[Lemma~1.52]{enchilada}
  that $\Phi_{L}$ is nondegenerate.

  To show that $\Phi_{L}$ is $L(u)$\,--\,$ L(u\tensor
  \gamma)$ equivariant, we recall from Remark~\ref{idaction} that $L(u\otimes\gamma)_s$ is conjugation by $(u_s\otimes\gamma_s)\oplus \gamma_s$. Thus the calculations
  \begin{align*}
    \big(\Phi_{L}^{11}\bigl(\mu_{s}(a)\big)\bigr)(x\otimes c)&=
    \mu_{s}(a)\cdot x\tensor
    c
    = u_{s}\tensor \gamma_{s}\bigl(a\cdot
    u_{s}^{-1}\tensor\gamma_{s}^{-1}(x\tensor c)\bigr)\\
    &= u_{s}\tensor \gamma_{s}\bigl(\Phi_L^{11}(a)(
    u_{s}^{-1}\tensor\gamma_{s}^{-1}(x\tensor c))\bigr),\\
  \Phi_{L}^{12}\bigl(u_{s}(x)\bigr)(c)&=u_{s}(x)\tensor c \notag = u_{s}\tensor \gamma_{s}\bigl(x\tensor\gamma_{s}^{-1}(c)\bigr)\\&=u_{s}\tensor \gamma_{s}\bigl(\Phi_L^{12}(x)(\gamma_{s}^{-1}(c))\bigr),\text{ and }\\
 \Phi_L^{22}(\beta_s(b))(c)&=\psi(\beta_{s}(b))c= \gamma_{s}\bigl(\psi(b)\bigr)c= \gamma_{s}\bigl(\Phi_L^{22}(b)\gamma_s^{-1}(c)\bigr)
  \end{align*}
  show that three of the four corners are appropriately equivariant. The fourth is too:
  \begin{align*}
  \Phi_L^{21}(\flat&(u_s(x)))(x_{1}\tensor c_{1})
   = A_{u_{s}(x)}(x_{1}\tensor c_{1})
   = \psi\bigl(\brip B<u_{s}(x),x_{1}>\bigr)c_{1}\\
   &= \psi\bigl(\beta_{s}\bigl(\brip
    B<x,u_{s}^{-1}(x_{1})>\bigr)\bigr)c_{1}
    = \gamma_{s}\bigl(\psi\bigl(\brip
    B<x,u_{s}^{-1}(x_{1})>\bigr)\gamma_{s}^{-1}(c_{1})\bigr)\\
    &=
    \gamma_{s}
    \bigl(\brip B<x,u_{s}^{-1}(x_{1})>\cdot \gamma_{s}^{-1}(c_{1})\bigr)
    =\gamma_{s}\bigl(\Phi_{L}^{21}\bigl(\flat(x)\bigr)\bigl(u_{s}^{-1}
    \tensor \gamma_{s}^{-1}(x_{1}\tensor c_{1})\bigr)\bigr).
  \end{align*}
   Thus $\Phi_{L}$ is
  $L(u)$\,--\,$ L(u\tensor \gamma)$-equivariant.

Define $\phi_{L(X)}:C_0(T)\to M(L(X))$ and
  $\phi_{L(X\otimes_BC)}:C_0(T)\to M(L(X\otimes_BC))$ by
$\phi_{L(X)}=
    \phi_K\oplus \phi_B$ and $\phi_{L(X\otimes_BC)}=\phi_K\oplus \phi_C$. Then $\Phi_{L}$ is $C_{0}(T)$-linear because $\psi$ is, and we can apply \cite[Proposition~2.6]{kqrproper} to $\Phi_{L}$. This gives a nondegenerate homomorphism
  \begin{equation*}
    \Phi_{L}|:L(X)^{L(u)}\to M\big(L(X\tensor_{B}C)^{L(u\tensor\gamma)}\big).
  \end{equation*}
 By Proposition~\ref{defgfpaib}
  the two fixed-point algebras are themselves linking algebras:
$L(X)^{L(u)}=L(X^u)$ and $L(X\tensor_{B}C)^{L(u\otimes\gamma)}=L((X\otimes_BC)^{u\otimes\gamma})$.
   Now applying Lemma~\ref{lem-lemma-c} to $\Phi_L|:L(X^u)\to M\big(L((X\tensor_{B}C)^{u\tensor\gamma})\big)$ gives the required  isomorphism.
\end{proof}

\subsection{Composing two imprimitivity bimodules}\label{2ibs}

\begin{theorem}
  \label{thm-lemma-b}
  Suppose that $_{(A,\alpha)}(X,u)_{(B,\beta)}$ and
  $_{(B,\beta)}(Y,v)_{(C,\gamma)}$ are \ib s.  Then
  $X^{u}\tensor_{B^{\beta}}Y^{v}\cong (X\tensor_{B}Y)^{u\tensor v}$ as
  $A^{\alpha}$\,--\,$ C^{\gamma}$ \ib s.
\end{theorem}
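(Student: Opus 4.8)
The plan is to mimic the strategy of Proposition~\ref{prop-lemma-a}, but now both factors are imprimitivity bimodules, so I cannot pass one of them to a target $C^*$-algebra via a homomorphism. Instead I would build a single homomorphism at the level of linking algebras that simultaneously encodes $X$, $Y$, and their balanced tensor product $X\otimes_B Y$. The natural ambient object is the $3\times 3$ linking algebra $L$ of the triple $(A,B,C)$ together with $X$, $Y$, $X\otimes_B Y$: concretely the $C^*$-algebra of $3\times 3$ matrices with entries $A$, $X$, $X\otimes_B Y$ (top row), $\widetilde X$, $B$, $Y$ (middle row), $\widetilde{X\otimes_B Y}$, $\widetilde Y$, $C$ (bottom row), which is isomorphic to $\K(X\oplus B\oplus C)$ --- this is the ``$3\times 3$ matrix trick'' promised in the paper's introduction. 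This $L$ carries a diagonal action built from $u$, $\beta$, $\gamma$ (equivalently the action conjugation-by-$(u_s\oplus\beta_s\oplus\gamma_s)$ as in Remark~\ref{idaction}), and a diagonal embedding $\phi=\phi_A\oplus\phi_B\oplus\phi_C$ of $C_0(T)$ into $M(L)$, making $(L,L(u),\phi)$ an object of the semi-comma category.

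Next I would identify the relevant $2\times 2$ linking-algebra corners inside $L$: the top-left $2\times 2$ block (rows/columns indexed by $A$-side and $B$-side) is $L(X)$, the bottom-right $2\times 2$ block is $L(Y)$, and the block obtained by deleting the middle row and column (the $A$-side and $C$-side) is $L(X\otimes_B Y)$. Since $X\otimes_B Y$ is an $A$\,--\,$C$ imprimitivity bimodule, the multiplier algebra $M(L)$ is $\L(X\oplus B\oplus C)$ and the inclusions $L(X)\hookrightarrow M(L)$, $L(Y)\hookrightarrow M(L)$ are nondegenerate homomorphisms that match corner projections. Now I apply Proposition~\ref{defgfpaib} to the object $(L,L(u),\phi)$: the generalised fixed-point algebra $L^{L(u)}$ is again a linking algebra, and its corners recover $A^\alpha$, $B^\beta$, $C^\gamma$, $X^u$, $Y^v$, and $(X\otimes_B Y)^{u\otimes v}$ (the last as the $A$-side\,--\,$C$-side corner, using $\K(X\otimes_B Y)^{\text{ad}}=A^\alpha$ via Proposition~\ref{defgfpaib}). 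Restricting the inclusions to fixed-point algebras (via \cite[Proposition~2.6]{kqrproper}) gives $L(X^u)\hookrightarrow M(L^{L(u)})$ and $L(Y^v)\hookrightarrow M(L^{L(u)})$ compatibly with corners.

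The final step is to read off the tensor-product identification from the matrix structure of $L^{L(u)}$: the $A$-side\,--\,$C$-side corner $(X\otimes_B Y)^{u\otimes v}$ equals the closure of the product of the $A$-side\,--\,$B$-side corner $X^u$ with the $B$-side\,--\,$C$-side corner $Y^v$ inside $L^{L(u)}$, and for corners of a linking algebra this product-corner is canonically isomorphic to the balanced tensor product $X^u\otimes_{B^\beta}Y^v$ over the $(B$-side$)$-corner $B^\beta$ (this is the standard linking-algebra description of $\otimes$, e.g. as used implicitly in Lemma~\ref{lem-lemma-c}). One checks the left $A^\alpha$-action matches --- it is conjugation by the $A$-side corner $A^\alpha\subset M(L^{L(u)})$, which is exactly $\kappa_A|$ --- and the $C^\gamma$-valued inner products agree because both are computed in $L^{L(u)}$. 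The main obstacle I anticipate is verifying that $L$ really is a $C^*$-algebra with the asserted $3\times 3$ structure (i.e.\ that the products of entries land where they should and that the norm from $\K(X\oplus B\oplus C)$ is the right one) and, more delicately, that passing to fixed-point algebras commutes with extracting all six corners simultaneously --- essentially one needs the analogue of the computation $\big(\begin{smallmatrix}E^K(k)&0\\0&0\end{smallmatrix}\big)=E^{L(X)}\big(\begin{smallmatrix}k&0\\0&0\end{smallmatrix}\big)$ from the proof of Proposition~\ref{defgfpaib}, now in the $3\times 3$ setting and including the off-diagonal corner $X\otimes_B Y$, so that $E^L$ restricted to each corner is the corner-wise $E$; once that bookkeeping is done, the isomorphism is forced.
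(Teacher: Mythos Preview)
Your proposal is correct and follows essentially the same route as the paper: build a $3\times 3$ linking algebra carrying a diagonal $G$-action and a diagonal embedding of $C_0(T)$, identify its $2\times 2$ sublinking algebras with $L(X)$, $L(Y)$, $L(X\otimes_B Y)$, pass to the generalised fixed-point algebra corner-by-corner, and then read off the isomorphism $X^u\otimes_{B^\beta}Y^v\cong (X\otimes_B Y)^{u\otimes v}$ from the matrix structure (the paper isolates this last step as Lemma~\ref{lem-lemma-e}). One small correction: your realisation ``$\K(X\oplus B\oplus C)$'' with action $\Ad(u_s\oplus\beta_s\oplus\gamma_s)$ is not literally well-defined since $X$, $B$, $C$ are modules over different algebras; the paper fixes this by tensoring everything into right Hilbert $C$-modules and working with $\K\bigl((X\otimes_B Y)\oplus Y\oplus C\bigr)$ under $\Ad\bigl((u_s\otimes v_s)\oplus v_s\oplus\gamma_s\bigr)$, using Lemma~\ref{lem-main} to identify the $(1,2)$ and $(2,1)$ corners with $X$ and $\widetilde X$.
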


A key ingredient in the proof of Theorem~\ref{thm-lemma-b} is
the following $3\times 3$ matrix trick.

\begin{lemma}
  \label{lem-lemma-e}
  Suppose that $D$ is a $C^{*}$-algebra and that $p_1$, $p_2$ and $p_3$ are
  projections in $M(D)$ such that $p_{1}+p_{2}+p_{3}=1$ and each $p_iDp_j$ is a $p_iDp_i$\,--\,$ p_jDp_j$ imprimitivity
  bimodule. Then the map
  \begin{equation}\label{eq:33}
    p_{1}dp_{2}\tensor p_{2}d'p_{3}\mapsto p_{1}dp_{2}d'p_{3}
  \end{equation}
  extends to a $p_1Dp_1$\,--\,$ p_3Dp_3$ imprimitivity-bimodule
  isomorphism of $p_{1}Dp_{2}\tensor_{p_{2}Dp_{2}}p_{2}Dp_{3}$ onto
  $p_{1}Dp_{3}$.
\end{lemma}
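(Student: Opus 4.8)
The plan is to show that the map induced by multiplication in $D$ is the required isomorphism. Define $\Omega$ on the balanced algebraic tensor product by $\Omega(p_1dp_2\otimes p_2d'p_3)=p_1dp_2d'p_3$, i.e.\ $\Omega(x\otimes y)=xy$ for $x\in p_1Dp_2$ and $y\in p_2Dp_3$. Since for $b\in p_2Dp_2$ both $(x\cdot b)\otimes y$ and $x\otimes(b\cdot y)$ map to $xby$, this is well defined; it is clearly linear, right $p_3Dp_3$-linear, and intertwines the two left actions of $p_1Dp_1$, all of these actions being multiplication in $D$. The content of the hypothesis is that, with the module structures and inner products coming from multiplication in $D$ --- so that $\langle x,x'\rangle_{p_jDp_j}=x^*x'$ and ${}_{p_iDp_i}\langle x,x'\rangle=x(x')^*$ --- each $p_iDp_j$ is an imprimitivity bimodule; the only nontrivial part is the fullness, namely $\clsp\,p_iDp_jp_jDp_i=p_iDp_i$ and $\clsp\,p_jDp_ip_iDp_j=p_jDp_j$ for all $i,j$.

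Next I would check that $\Omega$ preserves both inner products. For the right-hand one, the formula for the inner product on a balanced tensor product gives
\[
\langle x_1\otimes y_1,x_2\otimes y_2\rangle_{p_3Dp_3}=\langle y_1,\langle x_1,x_2\rangle_{p_2Dp_2}\cdot y_2\rangle_{p_3Dp_3}=y_1^*x_1^*x_2y_2=(x_1y_1)^*(x_2y_2),
\]
which is $\langle\Omega(x_1\otimes y_1),\Omega(x_2\otimes y_2)\rangle_{p_3Dp_3}$; in particular $\Omega$ is isometric on the algebraic tensor product and extends to an isometry $\Omega:p_1Dp_2\otimes_{p_2Dp_2}p_2Dp_3\to p_1Dp_3$. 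The one-line computation ${}_{p_1Dp_1}\langle x_1\otimes y_1,x_2\otimes y_2\rangle={}_{p_1Dp_1}\langle x_1\cdot(y_1y_2^*),x_2\rangle=(x_1y_1)(x_2y_2)^*$ shows that $\Omega$ preserves the left $p_1Dp_1$-valued inner products as well. So it only remains to see that $\Omega$ is surjective, for which it is enough that $\clsp\,p_1Dp_2p_2Dp_3=p_1Dp_3$. Here I would combine left-fullness of the imprimitivity bimodule $p_1Dp_2$, which gives $\clsp\,p_1Dp_2p_2Dp_1=p_1Dp_1$, with the nondegeneracy of the left action of $p_1Dp_1$ on the imprimitivity bimodule $p_1Dp_3$ (the left action of any imprimitivity bimodule is nondegenerate): then
\[
p_1Dp_3=\clsp\,p_1Dp_1\cdot p_1Dp_3=\clsp\,p_1Dp_2p_2Dp_1p_1Dp_3\subseteq\clsp\,p_1Dp_2p_2Dp_3\subseteq p_1Dp_3,
\]
forcing equality throughout. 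Thus $\Omega$ is an isometry with dense range, hence a surjective isometry preserving both inner products and intertwining the left actions, i.e.\ an imprimitivity-bimodule isomorphism of $p_1Dp_2\otimes_{p_2Dp_2}p_2Dp_3$ onto $p_1Dp_3$.

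I expect the surjectivity step to be the only place that genuinely uses the hypothesis that the corners are imprimitivity bimodules rather than merely bimodules, and hence the step I would write out carefully; everything else is a routine computation with the corners $p_iDp_j$ and the standard description of the internal tensor product.
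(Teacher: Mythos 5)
Your proof is correct, and up to the surjectivity step it is the same as the paper's: you define the multiplication map on the balanced tensor product and verify that it preserves both inner products by the same one-line computations. Where you diverge is in showing the map is onto. The paper observes that the image is a closed sub-bimodule of $p_1Dp_3$ whose inner products are full, and then invokes the Rieffel correspondence (\cite[Theorem~3.22]{tfb}) between closed submodules and ideals to conclude that the image is all of $p_1Dp_3$. You instead prove the density $\clsp p_1Dp_2\,p_2Dp_3=p_1Dp_3$ directly, combining left-fullness of $p_1Dp_2$ (so $\clsp p_1Dp_2\,p_2Dp_1=p_1Dp_1$) with nondegeneracy of the left $p_1Dp_1$-action on $p_1Dp_3$ to get
\[
p_1Dp_3=\clsp p_1Dp_1\cdot p_1Dp_3=\clsp p_1Dp_2\,p_2Dp_1\,p_1Dp_3\subseteq\clsp p_1Dp_2\,p_2Dp_3\subseteq p_1Dp_3.
\]
Your version is more elementary and self-contained (no appeal to the submodule--ideal correspondence, only fullness and the standard nondegeneracy of the left action of an imprimitivity bimodule), and it has the added merit of establishing explicitly the density $p_iDp_kDp_j$ dense in $p_iDp_j$ which the paper only extracts afterwards, in Remark~\ref{rem-new-3x3}, as a by-product of its proof. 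What the paper's route buys is brevity: once one knows the image is a closed sub-bimodule with full inner products, surjectivity is a one-line citation. Both arguments are sound, and your identification of surjectivity as the only place where fullness of the corners is genuinely needed matches the paper's own commentary.
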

\begin{proof}
  If $x,x_{1}\in p_{1}Dp_{2}$ and $y,y_{1}\in p_{2}Dp_{3}$, then
  \begin{align*}
    \trip p_{3}Dp_{3}<x\tensor y,x_{1}\tensor y_{1}>
    &= \brip p_{3}Dp_{3}<{\rip p_{2}Dp_{2}<x_{1},x>}y,y_{1}>
    = \rip p_{3}Dp_{3}<x^{*}_{1}xy,y_{1}>\\
    &= y^{*}x^{*}x_{1}y_{1}=\rip p_{3}Dp_{3}<xy,x_{1}y_{1}>.
  \end{align*}
  Therefore \eqref{eq:33} defines a right-Hilbert bimodule isomorphism
  of $p_{1}Dp_{2}\tensor_{p_{2}Dp_{2}}p_{2}Dp_{3}$ into $p_{1}Dp_{3}$.
  This map clearly preserves the right and left actions, and
  another computation like that above shows that it preserves the left
  inner products as well.  Therefore \eqref{eq:33} is an imprimitivity
  bimodule isomorphism of
  $p_{1}Dp_{2}\tensor_{p_{2}Dp_{2}}p_{2}Dp_{3}$ onto a closed
  sub-bimodule $M$ of $p_{1}Dp_{3}$.  Since, for example, $p_3Dp_1Dp_3$ is dense in $p_3Dp_3$ because $p_3Dp_1$ is an imprimitivity bimodule,  the inner products of $M$ are full. So the  Rieffel correspondence \cite[Theorem~3.22]{tfb} implies that
  the map \eqref{eq:33} is surjective.
\end{proof}

\begin{remark}
  \label{rem-new-3x3}
  We think the $3\times 3$-matrix trick in Lemma~\ref{lem-lemma-e} may be new, and it yields an
  interesting generalisation of \cite[Proposition~1.48]{enchilada}.  In the lemma, we assume that
  $p_{i}Dp_{j}Dp_{i}$ is dense in $p_{i}Dp_{i}$, and the proof shows
  that we then have $p_{i}Dp_{k}Dp_{j}$  dense in $p_{i}Dp_{j}$. Then, since
  $p_{1}+p_{2}+p_{3}=1$,
  \begin{equation*}
    \overline{D
      p_{k}D}=\sum_{i,j}p_{i}\overline{Dp_{k}D}p_{j}=\sum_{i,j}p_{i}Dp_{j}
    = D,
  \end{equation*}
 and each $p_{k}$ is full.  Conversely, if we know that each
  $p_{k}$ is full, then the hypotheses of the lemma are satisfied.  So
the off-diagonal entries are \ib s if and only
  if $p_{1}$, $p_{2}$ and $p_{3}$ are full.
\end{remark}

Next we prove some basic facts about adjointable operators on tensor products of Hilbert modules.
If $Z$ and $X$ are Hilbert $B$-modules and  $Y$ is a $B$\,--\,$C$ right-Hilbert
bimodule, then $T\mapsto T\tensor_{B}1$ induces a norm-decreasing map
$\tau_{Y}$ from $\mathcal{L}(X,Z)$ to $\mathcal{L}(X\tensor_{B}Y,
Z\tensor_{B}Y)$ (see, for example, \cite[Lemma~I.3]{tfb^2}).
\begin{remark}
  \label{rem-injective}
  If $\kappa_B:B\to \mathcal{L}(Y)$ is injective, then $\tau_{Y}$
  is injective.  To see this, suppose $T\in \mathcal{L}(X,Z)$ is nonzero. Then there  exists $x\in X$
  such that $Tx\not=0$.  Then $\langle Tx,Tx\rangle_B^{1/2}\not=0$, and since $\kappa_B$ is injective, there exists  $y\in Y$ such that $\langle Tx,Tx\rangle_B^{1/2}\cdot
  y\not=0$.  But then $\tau_{Y}(T)(x\tensor y)=Tx\tensor
  y\not=0$.
\end{remark}

For the proof of Lemma~\ref{lem-main}, we need a slight generalisation
of \cite[Proposition~4.7]{lan:hilbert}.

\begin{lemma}
  \label{lem-helper}
  Suppose that $X$ and $Z$ are right Hilbert $B$-modules and that $Y$
  is a $B$\,--\,$ C$ \ib.  Then $\tau_{Y}:T\mapsto T\otimes 1$ is an isometric isomorphism of
  $\mathcal{L}(X,Z)$ onto $\mathcal{L}(X\tensor _{B}Y,Z\tensor_{B}Y)$
  which takes $\mathcal{K}(X,Z)$ onto
  $\mathcal{K}(X\tensor_{B}Y,Z\tensor_{B} Y)$.
\end{lemma}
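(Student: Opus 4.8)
The plan is to upgrade the existing norm-decreasing map $\tau_Y\colon\mathcal L(X,Z)\to\mathcal L(X\tensor_B Y,Z\tensor_B Y)$ into an isometric isomorphism by constructing an explicit inverse, using the fact that $Y$ is now an imprimitivity bimodule rather than just a right-Hilbert bimodule. The key new ingredient is the \emph{dual module} $\widetilde Y$, which is a $C$\,--\,$B$ imprimitivity bimodule with $Y\tensor_C\widetilde Y\cong B$ as $B$\,--\,$B$ imprimitivity bimodules and $\widetilde Y\tensor_B Y\cong C$ as $C$\,--\,$C$ imprimitivity bimodules. First I would observe that since $Y$ is an imprimitivity bimodule, the left action $\kappa_B\colon B\to\mathcal L(Y)$ is injective (in fact an isomorphism onto $\mathcal K(Y)$), so Remark~\ref{rem-injective} already gives that $\tau_Y$ is injective. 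Moreover Lemma~\ref{lem-basic} (with equality, since $\kappa_B$ is injective) shows $\|\tau_Y(T)\|\ge$ the relevant norms on generators, and a short computation with $\langle\tau_Y(T)(x\tensor y),\tau_Y(T)(x\tensor y)\rangle = \langle\kappa_B(\langle Tx,Tx\rangle_B)y,y\rangle$ together with injectivity of $\kappa_B$ gives $\|\tau_Y(T)\|=\|T\|$; so $\tau_Y$ is isometric.

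The main work is surjectivity. Given $S\in\mathcal L(X\tensor_B Y,Z\tensor_B Y)$, I would form $\tau_{\widetilde Y}(S)\in\mathcal L\big((X\tensor_B Y)\tensor_C\widetilde Y,\ (Z\tensor_B Y)\tensor_C\widetilde Y\big)$ and then transport along the natural isomorphisms $(X\tensor_B Y)\tensor_C\widetilde Y\cong X\tensor_B(Y\tensor_C\widetilde Y)\cong X\tensor_B B\cong X$ (associativity of the internal tensor product, plus $Y\tensor_C\widetilde Y\cong B$), and similarly for $Z$. This produces an operator $T:=\rho(\tau_{\widetilde Y}(S))\in\mathcal L(X,Z)$, where $\rho$ denotes conjugation by these isomorphisms. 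One then checks $\tau_Y(T)=S$: applying $\tau_Y$ to $T$ and then $\tau_{\widetilde Y}$ recovers (up to the canonical isomorphisms) $\tau_{\widetilde Y}(S)$, because $\tau_{\widetilde Y}\circ\tau_Y$ is conjugation by the associativity isomorphism $X\tensor_B(Y\tensor_C\widetilde Y)\cong (X\tensor_B Y)\tensor_C\widetilde Y$ and so, modulo $Y\tensor_C\widetilde Y\cong B$, is the identity; since $\tau_{\widetilde Y}$ is injective (by the same Remark~\ref{rem-injective}, as $\widetilde Y$ is also an imprimitivity bimodule), we conclude $\tau_Y(T)=S$. Thus $\tau_Y$ is onto.

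Finally, for the statement about compact operators: $\tau_Y(\Theta_{z,x})=\Theta_{z,x}\tensor 1$, and a direct computation shows $(\Theta_{z,x}\tensor 1)(x'\tensor y')=\Theta_{z,x}(x')\tensor y' = z\cdot\langle x,x'\rangle_B\tensor y' = z\tensor\kappa_B(\langle x,x'\rangle_B)y'$. Since $\kappa_B$ maps $B$ onto $\mathcal K(Y)$, spanning over $y',y''$ one identifies $\Theta_{z,x}\tensor 1$ with a norm-limit of sums of operators $\Theta_{z\tensor y'', x\tensor y'}$; hence $\tau_Y$ carries $\mathcal K(X,Z)$ into $\mathcal K(X\tensor_B Y,Z\tensor_B Y)$. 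Conversely, running the inverse $\rho\circ\tau_{\widetilde Y}$ through the same generator computation (now over the imprimitivity bimodule $\widetilde Y$, using $\langle\widetilde Y,\widetilde Y\rangle_B$ dense in $B$) shows it carries $\mathcal K(X\tensor_B Y,Z\tensor_B Y)$ back into $\mathcal K(X,Z)$, giving equality. The only delicate point is bookkeeping of the canonical associativity and $Y\tensor_C\widetilde Y\cong B$ isomorphisms so that they cancel correctly; this is routine but must be done carefully, and is where I expect the real (if mild) obstacle to lie. (An alternative to the explicit inverse is to pass to linking algebras: view $\mathcal L(X,Z)$ and $\mathcal L(X\tensor_B Y,Z\tensor_B Y)$ as corners of linking-algebra multiplier algebras and invoke the imprimitivity-bimodule structure there, but the dual-module argument above is cleanest given what is already available in the paper.)
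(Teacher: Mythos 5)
Your proposal is correct and follows essentially the same route as the paper: both introduce $\tau_{\widetilde Y}$, exploit $Y\otimes_C\widetilde Y\cong B$ to produce a (one-sided, hence two-sided, by injectivity) inverse for $\tau_Y$, deduce isometry, and treat the compact operators via the creation/annihilation operators and the fact that $\kappa_B(B)=\K(Y)$. The only point worth tightening is the last step: to see that $\tau_Y(\Theta_{z,x})$ is a norm limit of finite-rank operators one should first factor $z=z_0\cdot b$ (or use an approximate identity of $B$) so that $\tau_Y(\Theta_{z_0\cdot b,x})=C_{z_0}\circ\kappa_B(b)\circ A_x$ with $\kappa_B(b)\in\K(Y)$, which is exactly how the paper argues.
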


\begin{proof}
  Let $\tau_{\widetilde Y}$ be the norm-decreasing map $S\mapsto S\otimes_C1$ of
  $\mathcal{L}(X\tensor_{B}Y,Z\tensor_{B}Y)$ into
  $\mathcal{L}(X\tensor_{B}Y\tensor _{C}\widetilde Y,
  Z\tensor_{B}Y\tensor_{C}\widetilde Y)$.  Since $Y\tensor_{C}\widetilde Y\cong B$, we have an
  isometric isomorphism  $\psi$ of $\mathcal{L}(X\tensor_{B}Y\tensor
  _{C}\widetilde Y, Z\tensor_{B}Y\tensor_{C}\widetilde Y)$ onto
  $\mathcal{L}(X,Z)$.  It is straightforward to see that $\psi\circ
  \tau_{\widetilde Y}\circ \tau_{Y}=\id$. Since $\tau_{\widetilde Y}$ is injective so is $\psi\circ
  \tau_{\widetilde Y}$, and hence $\psi\circ
  \tau_{\widetilde Y}\circ \tau_{Y}=\id$ implies $\tau_{Y}\circ \psi\circ
  \tau_{\widetilde Y}=\id$.  Thus $\tau_Y$ is a bijection.  Since $\tau_Y$ is norm decreasing, $\psi\circ
  \tau_{\widetilde Y}\circ \tau_{Y}=\id$ implies that $\tau_Y$ is isometric.

 For $x\in X$, $x\in Z$ and $b\in B$, a calculation shows that $\tau_Y(\Theta_{z\cdot b, x})=C_z\circ \kappa_B(b)\circ A_x$, which is compact because $\kappa_B(b)$ is.   Since every element of $Z$ has the form $z\cdot b$ (see, for example, \cite[Proposition~2.3]{tfb}), we deduce that $\tau_Y$ maps $\mathcal{K}(X,Z)$ into $\mathcal{K}(X\tensor_{B}Y,Z\tensor_{B} Y)$. The surjectivity argument used above shows that $\tau_Y$ maps the compacts onto the compacts.
\end{proof}

\begin{lemma}
  \label{lem-main}
  Suppose that $X$ is a right Hilbert $B$-module and that $Y$ is a
  $B$\,--\,$ C$ \ib.  Then Lemma~\ref{lem-basic} applies, and
  \begin{enumerate}
  \item the map $x\mapsto C_{x}$ is an isometric isomorphism of $X$
    onto $\mathcal{K}(Y,X\tensor_{B}Y)$ and
  \item the map $x\mapsto A_{x}$ is an isometric isomorphism of
    $\widetilde X$ onto $\mathcal{K}(X\tensor_{B}Y,Y)$.
  \end{enumerate}
\end{lemma}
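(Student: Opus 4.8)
The plan is to prove part~(1) directly and to deduce part~(2) by passing to adjoints. Since $Y$ is an imprimitivity bimodule, the left action $\kappa_B\colon B\to\L(Y)$ is an isomorphism of $B$ onto $\K(Y)$; in particular it is injective, so Lemma~\ref{lem-basic} applies and gives the equality $\|C_x\|=\|A_x\|=\|x\|$. Thus $x\mapsto C_x$ is already a linear isometry of $X$ into $\L(Y,X\otimes_B Y)$ (with adjoint $C_x^*=A_x$), and $x\mapsto A_x$ is a conjugate-linear isometry of $X$ into $\L(X\otimes_B Y,Y)$. What remains is to identify the two ranges as the spaces of \emph{compact} operators.

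For part~(1), I would first check that every $C_x$ is compact. Using that each element of $X$ factors as $x=x'\cdot b$ with $x'\in X$ and $b\in B$ (see, for example, \cite[Proposition~2.3]{tfb}), one has $C_x=C_{x'}\circ\kappa_B(b)$. Since $\kappa_B(b)\in\K(Y)$, it is a norm-limit of finite sums $\sum_i\Theta_{y_i,y_i'}$, and because $C_{x'}\circ\Theta_{y_1,y_2}=\Theta_{x'\otimes y_1,\,y_2}$ this exhibits $C_x$ as a norm-limit of elementary operators in $\K(Y,X\otimes_B Y)$. For surjectivity, recall that $\K(Y,X\otimes_B Y)$ is the closed span of the operators $\Theta_{z,y}$ with $z\in X\otimes_B Y$ and $y\in Y$, and that the elementary tensors $x\otimes y_0$ span a dense subspace of $X\otimes_B Y$; a short computation using the imprimitivity-bimodule identity $\kappa_B({}_B\langle y_0,y\rangle)=\Theta_{y_0,y}$ gives $\Theta_{x\otimes y_0,\,y}=C_{x\cdot{}_B\langle y_0,y\rangle}$. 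The image of the isometry $x\mapsto C_x$ is complete, hence norm-closed, so it contains the closed span of all these operators, namely all of $\K(Y,X\otimes_B Y)$. Therefore $x\mapsto C_x$ is an isometric isomorphism of $X$ onto $\K(Y,X\otimes_B Y)$.

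For part~(2), I would simply transport part~(1) through the adjoint operation. By Lemma~\ref{lem-basic} we have $A_x=C_x^*$, and $T\mapsto T^*$ is a conjugate-linear isometric bijection of $\K(Y,X\otimes_B Y)$ onto $\K(X\otimes_B Y,Y)$ (it sends $\Theta_{z,y}$ to $\Theta_{y,z}$). Composing with the isometric isomorphism of part~(1) shows that $x\mapsto A_x$ is a conjugate-linear isometric bijection of $X$ onto $\K(X\otimes_B Y,Y)$, i.e.\ a (linear) isometric isomorphism of $\widetilde X$ onto $\K(X\otimes_B Y,Y)$.

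The only step that is not purely formal is the compactness of $C_x$ in part~(1); the factorisation $x=x'\cdot b$ together with the identification $\kappa_B(B)=\K(Y)$ — available precisely because $Y$ is an imprimitivity bimodule — reduces it to approximating $\kappa_B(b)$ by finite-rank operators on $Y$, after which everything is bookkeeping with elementary operators.
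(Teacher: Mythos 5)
Your argument is correct, and it takes a genuinely different route from the one in the paper. You work directly with rank-one operators: compactness of $C_x$ comes from factoring $x=x'\cdot b$ and using that $\kappa_B(b)\in\K(Y)$ (which is exactly where the imprimitivity hypothesis enters), surjectivity comes from the identity $C_{x\cdot{}_B\langle y_0,y\rangle}=\Theta_{x\tensor y_0,\,y}$ together with closedness of the isometric image, and part~(2) is obtained for free by composing with the conjugate-linear bijection $T\mapsto T^*$ of $\K(Y,X\tensor_BY)$ onto $\K(X\tensor_BY,Y)$. The paper instead routes both parts through the standard identifications $X\cong\K(B,X)$ and $\widetilde X\cong\K(X,B)$ of \cite[Lemma~2.32]{tfb} and then applies Lemma~\ref{lem-helper}, the statement that $T\mapsto T\tensor 1$ is an isometric isomorphism carrying compacts to compacts when one tensors with an imprimitivity bimodule; the isometry in your approach comes instead from the equality clause of Lemma~\ref{lem-basic}, which you correctly invoke because $\kappa_B$ is injective for an imprimitivity bimodule. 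Your route is more elementary and self-contained: it would let one dispense with Lemma~\ref{lem-helper} (and the surrounding discussion of $\tau_Y$) entirely, at the cost of a few explicit computations with elementary operators. The paper's route is shorter on the page once Lemma~\ref{lem-helper} is available, and that lemma records a slight generalisation of \cite[Proposition~4.7]{lan:hilbert} of some independent interest; also note the paper proves (2) by a parallel argument with the operators $a(x)$ rather than by your adjoint trick, which is a small but genuine simplification on your side.
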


\begin{proof}
  The map $L_{x}:B\to X$ given by $L_{x}(b):=x\cdot b$ is in
  $\mathcal{K}(B,X)$ and $x\mapsto L_{x}$ is an isometric isomorphism
  of $X$ onto $\mathcal{K}(B,X)$ (see, for example,
  \cite[Lemma~2.32]{tfb}).  Using Lemma~\ref{lem-helper} and the isomorphism $B\otimes_BY\to Y$ given by $b\otimes y\mapsto b\cdot y$ we have
  isometric isomorphisms
  \begin{equation}\label{decompC}
      \xymatrix{{X}\ar[r]&\mathcal{K}(B,X)\ar[r]&
      \mathcal{K}(B\tensor_{B}Y,X\tensor_{B}Y)
      \ar[r]^-{\psi}&\mathcal{K}(Y,X\tensor_{B}Y)   }
  \end{equation}
 which take
$x\mapsto L_{x}\mapsto L_{x}\tensor 1\mapsto \psi(L_x\otimes 1).
$
 For $b\in B$ and $y\in Y$ we have
 \begin{align*}
 \psi(L_x\otimes 1)(b\cdot y)&=(L_x\otimes 1)(b\otimes y)=x\cdot b\otimes y= x\otimes b\cdot y=C_x(b\cdot y).
 \end{align*}
 Since $BY$ is dense in $Y$ we deduce that $\psi(L_x\otimes 1)=C_x$, and this gives (1).

  To establish (2), we observe that $\flat(x)\mapsto a(x)$ given
  by $a(x)(x')=\langle x,x'\rangle_B$ is an isometric isomorphism of
  $\widetilde X$ onto $\mathcal{K}(X,B)$ (see, for example,
  \cite[Lemma~2.32]{tfb}).  Applying Lemma~\ref{lem-helper}, we have
  isometric isomorphisms
  \begin{equation*}
    \xymatrix{{\widetilde{X}}
  \ar[r]& \mathcal{K}(X,B)\ar[r]&
      \mathcal{K}(X\tensor_{B}Y,
      B\tensor_{B}Y)\ar[r]^-{\psi'}&\mathcal{K}(X\tensor_{B}Y,Y)}
  \end{equation*}
  which takes $\flat(x)\mapsto a(x)\mapsto a(x)\tensor 1\mapsto \psi'(a(x)\otimes 1)$.
 Since $(a(x)\otimes 1)(x'\otimes y)=\langle x,x'\rangle_B\otimes y$ we have $\psi'(a(x)\otimes 1)(x'\otimes y)=\langle x, x'\rangle_B\cdot y=A_x(x'\otimes y)$.  Thus $\psi'(a(x)\otimes 1)=A_x$, and (2) follows.
\end{proof}

\begin{proof}[Proof of Theorem~\ref{thm-lemma-b}]
We view $(X\tensor_{B}Y)\oplus Y\oplus C$ as a right Hilbert
  $C$-module, and realise
  $\K\bigl((X\tensor_{B}Y)\oplus Y\oplus C\bigr)$ as matrices
  \begin{equation*}
    \begin{pmatrix}
      \K(X\tensor_{B}Y)&\K(Y,X\tensor_{B}Y)& \K(C,X\tensor_{B}Y) \\
      \K(X\tensor_{B}Y,Y)&\K(Y)&\K(C,Y) \\
      \K(X\tensor_{B}Y,C)&\K(Y,C)&\K(C)
    \end{pmatrix}.
  \end{equation*}
We identify the diagonal entries with $A$, $B$ and $C$,
  respectively, and we use Lemma~\ref{lem-main} to identify the $(1,2)$ entry with $X$ and the $(2,1)$ entry with $\widetilde X$. We also know that $L_y:c\mapsto y\cdot c$ gives an isomorphism $L$ of $Y$ onto $\K(C,Y)$. Thus we can realise $\K\bigl((X\tensor_{B}Y)\oplus
  Y\oplus C\bigr)$ as
  \begin{equation*}
   F:= \begin{pmatrix}
      A&X&X\tensor_{B}Y\\
      \widetilde{X}&B&Y \\
      (X\tensor_{B}Y)^{\sim}&\widetilde{Y}&C
    \end{pmatrix};
  \end{equation*}
 the action on $F$ on $(X\tensor_{B}Y)\oplus Y\oplus C$ is given by
  \begin{multline*}
    \begin{pmatrix}
      a&x_{1}&x_{2}\tensor y_{2}\\
      \flat(x_{3})&b&y_{1}\\
      \flat(x_{4}\tensor y_{4})&\flat(y_{3})&c_{1}
    \end{pmatrix}
    \begin{pmatrix}
      x_{5}\tensor y_{5}\\ y_{6}\\c_{2}
    \end{pmatrix}
    \\=
    \begin{pmatrix}
      a\cdot x_{5}\tensor y_{5}+x_{1}\tensor y_{6}+x_{2}\tensor
      y_{2}\cdot
      c_{2} \\
      \rip B<x_{3},x_{5}>\cdot y_{5}+b\cdot y_{6}+y_{1}\cdot c_{2} \\
      \brip C <{\rip B<x_{5},x_{4}>}\cdot y_{4},y_{5}> + \rip
      C<y_{3},y_{6}>+c_{1}c_{2}
    \end{pmatrix}.
  \end{multline*}
The action of $G$ on
  $\K\bigl((X\tensor_{B}Y)\oplus Y\oplus C\bigr)$ given by
  $\operatorname{Ad}\bigl((u_{s}\tensor v_{s})\oplus
  v_{s}\oplus \gamma_{s}\bigr)$ is strongly continuous; we claim that our identification of
  $\K\bigl((X\tensor_{B}Y)\oplus Y\oplus C\bigr)$ with $F$ intertwines
  $\operatorname{Ad}\bigl((u_{s}\tensor v_{s})\oplus
  v_{s}\oplus \gamma_{s}\bigr)$ with
  \begin{equation*}
  \eta:=  \begin{pmatrix}
      \alpha&u&u\tensor v\\
      \flat (u)&\beta&v\\
      \flat(u\tensor v)& \flat (v)&\gamma
    \end{pmatrix}.
  \end{equation*}
  This is clear on the diagonal blocks, and the other entries
  follow from routine calculations.  For example, if $L_{x\tensor
    y}\in \K(C,X\tensor_{B}Y)$ maps $c$ to $x\tensor
  y\cdot c$, then
  \begin{align*}
  \operatorname{Ad}\bigl((u_{s}\tensor v_{s})\oplus
  v_{s}\oplus \gamma_{s}\bigr)(L_{x\tensor y})(c)&=
    (u_{s}\tensor v_{s})(L_{x\tensor y}( \gamma_{s}^{-1}(c))\\
    & =
    u_{s}(x)\tensor v_{s}\bigl(y\cdot \gamma_{s}^{-1}(c)\bigr)\\
    &=u_{s}(x)\tensor v_{s}(y)\cdot c\\&=L_{u_{s}(x)\tensor v_{s}(y)}(c).
  \end{align*}

  By assumption, we have nondegenerate maps $\phi_{A}:C_{0}(T)\to
  M(A)$, $\phi_{B}:C_{0}(T)\to M(B)$ and $\phi_{C}:C_{0}(T)\to M(C)$, and these give a nondegenerate diagonal map
$\phi:= \phi_{A}\oplus \phi_{B}\oplus \phi_{C}$ of $C_{0}(T)$ into $M(F)$. Now $(F,\eta,\phi)$ is an object in the
  semi-comma category
  $\mathcal{A}\bigl(G,\bigl(C_{0}(T),\operatorname{rt})\bigr)\bigr)$.
  The linking algebras $L(X)$, $L(Y)$ and
  $L(X\tensor_{B}Y)$ embed into $F$ as, respectively, the $p_{1}+p_{2}$,
  $p_{2}+p_{3}$ and $p_{1}+p_{3}$ corners.  As in
  Proposition~\ref{defgfpaib}, this allows us to transfer
  $E^{L(X)}$ to $E^{F}$ and realise $L(X)^{L(u)}$ as
  $(p_{1}+p_{2})F^{\eta}(p_{1}+p_{2})$.  Similarly, we realise
  $L(Y)^{L(v)}$ as $(p_{2}+p_{3})F^{\eta}(p_{2}+p_{3})$ and
  $L(X\tensor_{B}Y)^{L(u\tensor v)}$ as
  $(p_{1}+p_{3})F^{\eta}(p_{1}+p_{3})$.  Putting these identifications together shows that
  \begin{equation*}
    F^\eta=
    \begin{pmatrix}
      A^{\alpha}&X^{u}&(X\tensor_{B}Y)^{u\tensor v} \\ *
      &B^{\beta}&Y^{v}\\{}*&*&C^{\gamma}
    \end{pmatrix}.
  \end{equation*}
  We know that $X^{u}$, $Y^{v}$ and $(X\tensor_{B}Y)^{u\tensor v}$
  are imprimitivity bimodules with actions and inner products coming
  from the matrix operations in $F^{\eta}$, so
  Lemma~\ref{lem-lemma-e} gives the required isomorphism of
  $X^{u}\tensor_{B^{\beta}}Y^{u}$ onto $(X\tensor_{B}Y)^{u\tensor v}$.
\end{proof}

\subsection{Proof of Theorem~\ref{thm-fix}}\label{pf-fixed} To show
that $\Fix$ is a functor we need to show that it maps the identity
morphism at an object $(A,\alpha, \phi_A)$ in $\Aa(G)$ to the identity
morphism at $\Fix(A,\alpha,\phi_A)$ in $\CC$, and that $\Fix$ preserves
composition.

The identity morphism at $(A,\alpha, \phi_A)$ is implemented by the
bimodule $(_AA_A,\alpha)$, and the linking algebra $L({}_AA_A)$ is
just $M_2(A)$. Since every representation of $M_2(A)$ is equivalent to
one of the form $\pi\otimes 1$ for some representation $\pi$ of $A$,
every positive functional on $M_2(A)$ is a linear combination of
vector functionals applied to the entries of the matrices. Thus the
linear map
$\big(\begin{smallmatrix}a&b\\c&d\end{smallmatrix}\big)\mapsto
\big(\begin{smallmatrix}E^A(a)&E^A(b)\\E^A(c)&E^A(d)\end{smallmatrix}\big)$
has the property \eqref{defE} which characterises $E^{M_2(A)}$, and it
follows that
\begin{align*}
  \Fix({}_AA_A,\alpha)&=p\big(\overline{E^{M_2(A)}(C_c(T)M_2(A)C_c(T))}\big)q\\
  &=\overline{E^A(C_c(T)AC_c(T))}\\&={}_{\Fix
    A}\Fix(A,\alpha,\phi_A)_{\Fix A},
\end{align*}
which is the identity morphism at $\Fix(A,\alpha, \phi_A)$ in $\CC$.

It remains to show that if ${}_{(A,\alpha)}(X,u)_{(B,\beta)}$ and
${}_{(B,\beta)}(Y,v)_{(C,\gamma)}$ are right-Hilbert bimodules, then
$\Fix(X\otimes_BY,u\otimes v)$ is isomorphic to
$\Fix(X,u)\otimes_{B^\beta}\Fix(Y,v)$ as a right-Hilbert
$A^\alpha$\,--\,$C^\gamma$ bimodule. To form
$\Fix(X,u)\otimes_{B^\beta}\Fix(Y,v)$, we factor $(X,u)$ and $(Y,v)$
as in Corollary~\ref{factor}. Applying Proposition~2.6 of \cite{kqrproper}
gives us nondegenerate homomorphisms $\kappa_A|$ and $\kappa_B|$, and
Proposition~\ref{defgfpaib} gives imprimitivity bimodules $X^u$ and
$Y^v$, and then $\Fix(X,u)\otimes_{B^\beta}\Fix(Y,v)$ is implemented
by the composition
\begin{equation}\label{eqfunct1a}
  \xymatrix{
    A^\alpha
    \ar[r]^-{\kappa_A|}
    &\K(X)^\mu
    \ar[r]^-{X^u}
    &B^\beta
    \ar[r]^-{\kappa_B|}
    &\K(Y)^\notnu
    \ar[r]^-{Y^v}
    &C^\gamma,
  }
\end{equation}
where $\mu$ and $\notnu$ are the compatible actions coming from
Corollary~~\ref{factor}.  Similarly, $\Fix(X\otimes_BY,u\otimes v)$ is
implemented by the composition
\begin{equation}\label{eqfunct2a}
  \xymatrix{
    A^\alpha
    \ar[r]^-{\kappa_A'|}
    &\K(X\otimes_BY)^\xi
    \ar[rr]^-{(X\otimes_BY)^{u\otimes v}}
    &&C^\gamma
  }
\end{equation}
where $\xi$ is the compatible action coming from
Corollary~~\ref{factor}. We need to show that the right-Hilbert
$A^\alpha$\,--\,$C^\gamma$ bimodules defined by \eqref{eqfunct1a} and
\eqref{eqfunct2a} are isomorphic.

We begin by applying Proposition~\ref{prop-lemma-a} to the middle two
arrows of \eqref{eqfunct1a}. This gives us a right-Hilbert bimodule
isomorphism of $X^u\otimes_{B^\beta} \K(Y)^\rho$ onto
$(X\otimes_B\K(Y)\big)^{u\otimes\notnu}$, and hence the composition
\eqref{eqfunct1a} is isomorphic to
\begin{equation}\label{eqfunct1b}
  \xymatrix{
    A^\alpha
    \ar[r]^-{\kappa_A|}
    &\K(X)^\mu
    \ar[rr]^-{\kappa_{\K(X)}|}
    &&\K((X\otimes_B\K(Y))^{u\otimes\notnu}
    \ar[rr]^-{(X\otimes_B\K(Y))^{u\otimes\notnu}}
    &&\K(Y)^\notnu
    \ar[r]^-{Y^v}
    &C^\gamma.
  }
\end{equation}
Applying Theorem~\ref{thm-lemma-b} to the third and fourth arrows in
\eqref{eqfunct1b} gives an isomorphism
\begin{equation}\label{eqfunct3}
  \big(X\otimes_B\K(Y)\big)^{u\otimes\notnu}\otimes_{\K(Y)^\notnu}Y^v\cong \big((X\otimes_B\K(Y))\otimes_{\K(Y)}Y\big)^{(u\otimes\notnu)\otimes v}
\end{equation}
of imprimitivity bimodules; the composition of the first two arrows
just implements the left action by $A$ (or rather, of the subalgebra $A^\alpha$ of $M(A)$) on $M(X\otimes_B\K(Y))$. So the
compositions \eqref{eqfunct1a} and \eqref{eqfunct1b} are isomorphic to
\begin{equation*}\label{eqfunct1c}
  \xymatrix{
    A^\alpha
    \ar[r]
    &\K((X\otimes_B\K(Y)\otimes_{\K(Y)} Y)^{u\otimes\notnu\otimes v})
    \ar[rrrr]^-{(X\otimes_B\K(Y)\otimes_{\K(Y)} Y)^{u\otimes\notnu\otimes v}}
    &&&&C^\gamma
  }
\end{equation*}
as right-Hilbert bimodules, and we need a
right-Hilbert $C^\gamma$-module isomorphism of the right-hand side of
\eqref{eqfunct3} onto $(X\otimes_BY)^{u\otimes v}$ which respects the
left actions of $A^\alpha$.

The map $(x\otimes\Theta)\otimes y\mapsto x\otimes\Theta(y)$ extends
to a right $(C,\gamma)$-module isomorphism $\theta$ of
$\big(X\otimes_B\K(Y)\otimes_{\K(Y)}Y, (u\otimes\notnu)\otimes v\big)$
onto $(X\otimes_BY,u\otimes v)$, and $\theta$ preserves the left actions of
$A$. This isomorphism induces an equivariant isomorphism $\Ad\theta$
of $\K(X\otimes_B\K(Y)\otimes_{\K(Y)}Y)$ onto $\K(X\otimes_B Y)$. Proposition~\ref{defgfpaib} implies that the
two linking algebras are objects in $(\Aa(G), (C_0(T),\rt))$, and
\[
L(\theta)=\begin{pmatrix}\Ad\theta&\theta\\ *
  &\id\end{pmatrix}:\big(L(X\otimes_B\K(Y)\otimes_{\K(Y)}Y),L(u\otimes\notnu\otimes
v)\big)\to \big(L(X\otimes_B Y),u\otimes v\big)
\]
is an isomorphism. By \cite[Proposition~2.6]{kqrproper}, $L(\theta)$
induces an isomorphism $L(\theta)|$ on generalised fixed-point
algebras; by Proposition~\ref{defgfpaib}, the three corners
$(\Ad\theta)|$, $\theta|$ and  $\id$ of $L(\theta)|$ form an imprimitivity-bimodule isomorphism of the right-hand side of \eqref{eqfunct3} onto
$(X\otimes_B Y)^{u\otimes v}$. This isomorphism preserves the left
action of $A^\alpha$ because $\theta$
preserves the left action of $A$.

This completes the proof of Theorem~\ref{thm-fix}.

\subsection{Proof of Theorem~\ref{thm-main}}\label{pfthmmain} Let
$(X,u): (A,\alpha,\phi_A)\to (B,\beta,\phi_B)$ be a morphism in
$\Aa(G,(C_0(T),\rt))$. We factor $X$ using Corollary~\ref{factor} to
get:
\begin{equation}\label{eq-main1}
  \xymatrix{
    (A,\alpha,\phi_A)
    \ar[r]^-{\kappa}
    &(\K(X),\mu,\phi_{\K(X)})
    \ar[r]^-{(X,u)}
    &(B,\beta,\phi_B).
  }
\end{equation}
Write $\K=\K(X)$. We need to show that the outer square of the
following diagram
\begin{equation}\label{eq-main-2}
  \xymatrix{
    A\rtimes_{r} G
    \ar[rrrr]^-{Z(A,\alpha,\phi_A)}
    \ar[dd]_-{X\rtimes_{r}G}
    \ar[rd]^-{\kappa\rtimes_r G}
    &&&&
    \Fix(A,\alpha,\phi_A)
    \ar[ld]_-{\kappa|}
    \ar[dd]^-{\Fix(X,u)}
    \\
    & \K\rtimes_{r} G
    \ar[ld]^-{X_B\rtimes_r G}
    \ar[rr]_-{Z(\K,\mu,\phi_{\K})}
    &&\Fix(\K,\mu,\phi_\K)
    \ar[rd]^-{({}_\K X_B)^u}
    \\
    B\rtimes_{r}G
    \ar[rrrr]^-{Z(B,\beta,\phi_B)}
    &&&&
    \Fix(B,\beta,\phi_B)
  }
\end{equation}
commutes.  The factorisation \eqref{eq-main1} induces a factorisation
of $X\rtimes_{u,r}G$, and this factorisation says precisely that the
left triangle of diagram \eqref{eq-main-2} commutes.  Since
$\kappa:(A,\alpha,\phi_A)\to (\K(X),\mu,\phi_\kappa)$ is a morphism in
$(C_0(T),\rt)\downarrow \Aa_{\nd}(G)$,  the upper
quadrilateral of diagram \eqref{eq-main-2} commutes by
\cite[Theorem~3.2]{kqrproper}.

Next,  recall that $L( X)\rtimes_{L(u),r}G$ is isomorphic to $L(
X\rtimes_{u,r}G)$ (see, for example,
\cite[Proposition~4.3]{aHRWproper}) and that $L(X)^{L(u)}=L(X^u)$ by
Proposition~\ref{defgfpaib}. So we can view $Z(L(X))$ as a
$L(X\rtimes_{u,r}G)$\,--\,$L(X^u)$ imprimitivity bimodule, and apply
\cite[Lemma~4.6]{ER} to see that that the lower quadrilateral of
diagram \eqref{eq-main-2} commutes.  Since $\Fix(X,u)$ is by
definition the composition of $\kappa|$ with $X^u$, the right
triangle of diagram \eqref{eq-main-2} commutes. Thus
the outer square commutes too.


\section{Naturality of Mansfield imprimitivity}\label{sec-appl}

We want to use Theorem~\ref{thm-main} to prove naturality for the Mansfield im\-prim\-itivity theorem for closed subgroups of
\cite{hrman}. As observed in \cite{kqrproper}, for every closed
subgroup $H$, $((B\rtimes_\delta G,\hat\delta|),j_G)$ is an object in
the comma category $(C_0(G),\rt)\downarrow \Aa_{\nd}(H)$, and the general theory of \cite[\S3]{kqrproper} gives a version of naturality for functors
defined on a category of coactions built from the smaller category
$\CCnd$, but taking values in the larger category $\CC$. Here we prove a similar result for functors defined on a category of coactions built from $\CC$ (Theorem~\ref{newnatforcoact}
below). This new theorem directly extends \cite[Theorem~4.3]{enchilada} from closed normal subgroups to arbitrary closed subgroups.
Throughout this section, $H$ is a closed subgroup of a locally compact group $G$.

Mansfield's theorem is about crossed products by coactions, and since
there are several different kinds of coactions, we either have to
prove several versions of every theorem or make choices. Since the
crossed products associated to the different kinds of coactions
typically coincide, and since our goal is a theorem about crossed
products, we are going to make choices and hope that whoever wants
them can deduce the other versions easily enough. Here we choose to
use the \emph{normal} coactions which were introduced in \cite{quigg-fr} and discussed at length in \cite[Appendix~A]{enchilada}. We have three reasons for making this choice. First, we know from
\cite[Theorem~2.15]{enchilada} that there is a category $\Aca^{\nor}(G)$ (denoted there by $\Cc^n(G)$) in
which the objects $(B,\delta)$ consist of a normal coaction
$\delta$ of $G$ on $B$, and in which the morphisms are based on those
in the category $\CC$. Second, we know from
\cite[Theorem~3.13]{enchilada} that the assignment $(B,\delta)\mapsto
B\rtimes_\delta G$ extends to a functor $\CP$ from $\Aca^{\nor}(G)$ to
$\CC$. And third, we want our results to directly generalise
Theorem~4.3 of \cite{enchilada}, which is couched in terms of normal
coactions and the crossed product functors on $\Aca^{\nor}(G)$. However, our
choice has downsides: we have to jump around a bit to apply results from \cite{hrman} and \cite{kqrproper}
about reduced coactions, and we have to add the hypothesis of normality, which irks a little because we believe we could prove a similar result
without normality using Quigg's normalisation process \cite{quigg-fr}.

Suppose that $\delta:B\to M(B\otimes C^*(G))$ is a normal
coaction and that $(j_B,j_G^B)$ is the canonical covariant
representation of $(B,\delta)$ in $M(B\rtimes_\delta G)$. Then $j_G=j^B_G$
is equivariant for the action $\rt$ of $H$ by right translation on
$C_0(G)$ and the restriction $\hat\delta|_H$ of the dual coaction, and
hence $(B\rtimes_\delta G,\hat\delta|_H,j_G)$ is an object in the
semi-comma category $\Aa(H,(C_0(G),\rt))$. Since the morphisms in the
semi-comma category are just those in $\Aa(H)$, it follows from
\cite[Theorem~3.13]{enchilada} that there is a functor $\CP:\Aca^{\nor}(G)\to
\Aa(H,(C_0(G),\rt))$ which takes  $(B,\delta)$ to
$(B\rtimes_\delta G,\hat\delta|_H,j_G)$. The natural isomorphism of
Theorem~\ref{thm-main} immediately gives:

\begin{cor}\label{1stMan}
  The assignment $(B,\delta)\mapsto Z(B\rtimes_\delta
  G,\hat\delta|_H,j_G)$ implements a natural isomorphism between the
  functors $\RCP_H\circ\CP$ and $\Fix\circ\CP$ from $\Aca^{\nor}(G)$ to
  $\CC$.
\end{cor}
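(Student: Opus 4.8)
The plan is to deduce this immediately from Theorem~\ref{thm-main} by instantiating it with $(T,G):=(G,H)$ and then precomposing the resulting natural isomorphism with the functor $\CP$. The only hypothesis of Theorem~\ref{thm-main} that needs checking in this situation is that $H$ acts freely and properly on the right of $G$.

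First I would note that the right-translation action $\rt$ of the closed subgroup $H$ on $G$ is free --- $gh=g$ forces $h=e$ --- and proper: the map $(g,h)\mapsto(gh,g)$ is a homeomorphism of $G\times H$ onto the set $\{(g_1,g_2):g_2^{-1}g_1\in H\}$, which is closed in $G\times G$ because $H$ is closed, and a homeomorphism onto a closed subset is a proper map. Consequently Theorems~\ref{thm-fix} and \ref{thm-main}, as well as Proposition~\ref{prop-rcp}, all apply with the space $T$ replaced by $G$ and the acting group replaced by $H$; in particular, Rieffel's Morita equivalences $Z(A,\alpha,\phi):A\rtimes_{\alpha,r}H\to\Fix(A,\alpha,\phi)$ form a natural isomorphism between the functors $\RCP_H$ and $\Fix$ from $\Aa(H,(C_0(G),\rt))$ to $\CC$.

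Finally I would invoke the functor $\CP:\Aca^{\nor}(G)\to\Aa(H,(C_0(G),\rt))$ recalled in the preceding paragraph, which sends $(B,\delta)$ to $(B\rtimes_\delta G,\hat\delta|_H,j_G)$, together with the elementary fact that precomposing a natural isomorphism with a functor again yields a natural isomorphism. It follows that the family $\{Z(B\rtimes_\delta G,\hat\delta|_H,j_G)\}$ is a natural isomorphism from $\RCP_H\circ\CP$ to $\Fix\circ\CP$: for each morphism $\sigma:(B,\delta)\to(C,\epsilon)$ of $\Aca^{\nor}(G)$, the morphism $\CP(\sigma)$ of $\Aa(H,(C_0(G),\rt))$ makes the naturality square for $Z$ commute in $\CC$ by Theorem~\ref{thm-main}, and that square is exactly the naturality square required at $\sigma$. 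There is no real obstacle here: all of the substance lies in Theorem~\ref{thm-main}, and the only point to verify --- properness of the translation action --- is standard.
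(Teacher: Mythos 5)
Your proposal is correct and follows exactly the paper's route: the paper also obtains this corollary by applying Theorem~\ref{thm-main} with $(T,G)$ replaced by $(G,H)$ and precomposing the resulting natural isomorphism with the functor $\CP:\Aca^{\nor}(G)\to\Aa(H,(C_0(G),\rt))$, $(B,\delta)\mapsto(B\rtimes_\delta G,\hat\delta|_H,j_G)$. The only addition in your write-up is the (correct, standard) verification that $H$ acts freely and properly on $G$ by right translation, which the paper takes for granted.
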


Suppose for the moment that $G$ is amenable, so that full and reduced
crossed products coincide, and normal and reduced coactions
coincide. Then it follows from results in \cite{hrman} that the
fixed-point algebra $\Fix(B\rtimes_\delta G,\hat\delta|_H,j_G)$ is the
crossed product $B\rtimes_\delta(G/H)$ by the homogeneous space $G/H$
(see \cite[\S6]{kqrproper} for the details), and
Corollary~\ref{1stMan} gives a natural isomorphism between functors
with object maps $(B,\delta)\mapsto
(B\rtimes_\delta G)\rtimes_{\hat\delta}H$ and
$\RCP_{G/H}:(B,\delta)\mapsto B\rtimes_\delta(G/H)$. This isomorphism
extends \cite[Theorem~4.3]{enchilada} to non-normal subgroups of
amenable groups. Our next goal is to remove the assumption of
amenability, and thereby obtain a theorem which includes the full
strength of \cite[Theorem~4.3]{enchilada}.

The amenability of $G$ appeared above because we needed to use the
results on functoriality of $\CP$ from \cite{enchilada}, which apply
to normal coactions, alongside results from \cite{hrman}, which are
about reduced coactions. So to lift the amenability hypothesis, we
have to convert the results about reduced coactions to normal
coactions.

Suppose again that $\delta:B\to M(B\otimes C^*(G))$ is a normal
coaction. Then $\delta^r:=(\id\otimes\lambda)\circ\delta:B\to
M(B\otimes C_r^*(G))$ is a reduced coaction, called the
\emph{reduction} of $\delta$, and
$(B\rtimes_{\delta^r}G,j_B,j_G)$ is also a crossed product for the
original system $(B,G,\delta)$ \cite[Propositions~3.3
and~2.8]{quigg-fr}. The dual actions $\hat{\delta}$ and
$\widehat{\delta^r}$ also coincide, because both are characterised by
their behaviour on the spanning set $\{j_B(b)j_G(f)\}$, and
\[
\hat\delta_s(j_B(b)j_G(f))=j_B(b)j_G(\rt_s(f))=(\widehat{\delta^r})_s(j_B(b)j_G(f)).
\]
Thus
$(B\rtimes_{\delta}G,\hat{\delta}|_H,j_G)=(B\rtimes_{\delta^r}G,\widehat{\delta^r}|_H,j_G)$. This
system is an element of the semi-comma category $\Aa(H,(C_0(G),\rt))$,
which has the same objects as the comma category
$(C_0(G),\rt|)\downarrow \Aa_{\nd}(H)$ in \cite{kqrproper}, so
Proposition~3.1 of \cite{kqrproper} implies that $\hat{\delta}|_H$ is
proper with $\Fix(B\rtimes_\delta
G,\hat\delta|,j_G)=\Fix(B\rtimes_{\delta^r}
G,\widehat{\delta^r}|,j_G)$ (properness itself was first proved in
\cite[Theorem~5.7]{integrable}). Since $H$ acts freely on $G$, the
action $\hat\delta|_H$ is also saturated in the sense of
\cite[Definition~1.6]{proper}, and Corollary~1.7 of \cite{proper}
gives an imprimitivity bimodule
\[
Z(B\rtimes_{\delta}G,\hat{\delta}|_H,j_G)=Z(B\rtimes_{\delta^r}G,\widehat{\delta^r}|_H,j_G)
\]
implementing  a Morita equivalence between $(B\rtimes_\delta
G)\rtimes_{\hat\delta,r}H$ and $\Fix(B\rtimes_\delta
G,\hat\delta|_H,j_G)$. The discussion at the start of
\cite[\S6]{kqrproper} (applied to the reduced coaction $\delta^r$)
gives the following description of $\Fix(B\rtimes_\delta
G,\hat\delta|_H,j_G)$.

\begin{prop}
  As a subset of of $M(B\rtimes_{\delta}G)=M(B\rtimes_{\delta^r} G)$,
  the generalised fixed-point algebra $\Fix(B\rtimes_\delta
  G,\hat\delta|_H,j_G):=\Fix(B\rtimes_{\delta^r}
  G,\widehat{\delta^r}|_H,j_G)$ coincides with the subset
  \begin{equation}\label{spanforBG/H}
    B\rtimes_{\delta,r}(G/H):=\clsp\{j_B(b)j_G(f):b\in B,\ f\in C_0(G/H)\},
  \end{equation}
  which is called the \emph{crossed product of $B$ by the homogeneous space $G/H$}.
\end{prop}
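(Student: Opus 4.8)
The plan is to reduce the proposition to the corresponding statement for a reduced coaction and then quote the opening discussion of \cite[\S6]{kqrproper}. The reduction is already in place: the discussion preceding the proposition shows that
\[
(B\rtimes_{\delta}G,\hat{\delta}|_H,j_G)=(B\rtimes_{\delta^r}G,\widehat{\delta^r}|_H,j_G)
\]
as objects of the semi-comma category $\Aa(H,(C_0(G),\rt))$ --- the same $C^*$-algebra sitting inside $M(B\rtimes_\delta G)=M(B\rtimes_{\delta^r}G)$, the same action of $H$, and the same nondegenerate equivariant embedding $j_G$ of $C_0(G)$. Since $\Fix(A,\alpha,\phi)$ --- the norm closure of $E^A(A_0)$, with $A_0=\newspan\phi(C_c(G))A\phi(C_c(G))$ and $E^A$ characterised by \eqref{defE} --- depends only on the triple $(A,\alpha,\phi)$, the two displayed systems have the same generalised fixed-point algebra; so it suffices to identify $\Fix(B\rtimes_{\delta^r}G,\widehat{\delta^r}|_H,j_G)$ with the span \eqref{spanforBG/H}. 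For the reduced coaction $\delta^r$ this identification is exactly the content of the discussion at the start of \cite[\S6]{kqrproper}, which we invoke.

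To see what is being invoked, recall its mechanism. Using the commutation relations between $j_B$ and $j_G$ for a reduced coaction, the two outer copies of $j_G(C_c(G))$ in $A_0$ can be absorbed, so that $A_0$ and $\newspan\{j_B(b)j_G(f):b\in B,\ f\in C_c(G)\}$ have the same norm closure. From $\widehat{\delta^r}_h(j_B(b)j_G(f))=j_B(b)j_G(\rt_h f)$ for $h\in H$, together with \eqref{defE}, one computes, for $b\in B$ and $f\in C_c(G)$,
\[
E^{B\rtimes_{\delta^r}G}\bigl(j_B(b)j_G(f)\bigr)=j_B(b)\,j_G(\Phi f),\qquad \Phi f(xH):=\int_H f(xh)\,dh .
\]
The averaging map $\Phi$ carries $C_c(G)$ onto $C_c(G/H)$, and since $C_c(G/H)$ is dense in $C_0(G/H)$, the norm closure of $E(A_0)$ is precisely $\clsp\{j_B(b)j_G(g):b\in B,\ g\in C_0(G/H)\}$, which is \eqref{spanforBG/H}.

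The one genuinely delicate point --- and the reason this computation is relegated to \cite[\S6]{kqrproper} rather than reproduced here --- is the displayed formula: the integral $\int_H\rt_h f\,dh$ does not converge in norm in $C_0(G)$ (its pointwise limit $\Phi f$ lies in $C_b(G)=M(C_0(G))$, not in $C_0(G)$, unless $H$ is compact), so one must interpret it strictly in $M(C_0(G))$ and check that $j_G$ and the functionals appearing in \eqref{defE} behave correctly with respect to this limit. Everything else --- the commutation relations, the norm-continuity of $a\mapsto E^A(fag)$, and the surjectivity of $\Phi$ --- is routine. Granting the reference's treatment of the delicate point, the proposition follows from the identification of the two triples above together with this computation of $E$ on the spanning elements $j_B(b)j_G(f)$.
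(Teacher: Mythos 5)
Your proposal takes essentially the same route as the paper: the paper likewise identifies $(B\rtimes_{\delta}G,\hat{\delta}|_H,j_G)$ with $(B\rtimes_{\delta^r}G,\widehat{\delta^r}|_H,j_G)$ in the preceding paragraph and then offers no further argument beyond citing the discussion at the start of \cite[\S 6]{kqrproper} applied to the reduction $\delta^r$. Your sketch of the averaging mechanism behind that citation (absorbing the outer copies of $j_G(C_c(G))$, computing $E(j_B(b)j_G(f))=j_B(b)j_G(\Phi f)$ with the strictly convergent integral, and using surjectivity of $\Phi:C_c(G)\to C_c(G/H)$) is a correct account of what the cited discussion does, so the proposal is fine.
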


We have now proved an analogue of \cite[Theorem~3.1]{hrman}
for normal coactions:

\begin{prop}[Mansfield imprimitivity for normal coactions]
  \label{ahrmansfieldnormal}Suppose that $\delta$ is a normal coaction
  of $G$ on a $C^*$-algebra $B$, and that $H$ is a closed subgroup of
  $G$. Then the dual action $\hat\delta$ of $H$ on $B\rtimes_\delta G$
  is proper and saturated with respect to the subalgebra
  $A_0=j_G(C_c(G))(B\rtimes_\delta G)j_G(C_c(G))$, and Rieffel's
  imprimitivity bimodule
  $Z(B\rtimes_{\delta}G,\hat{\delta}|_H,j_G)=\overline{A_0}$
  implements a Morita equivalence between $(B\rtimes_{\delta}
  G)\rtimes_{\hat\delta,r}H$ and $B\rtimes_{\delta,r}(G/H)$.
\end{prop}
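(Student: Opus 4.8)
The plan is to assemble the facts established in the preceding pages; this proposition is essentially a repackaging of them. First I would note that the triple $(B\rtimes_\delta G,\hat\delta|_H,j_G)$ is an object of the semi-comma category $\Aa(H,(C_0(G),\rt))$ for the free and proper action of the closed subgroup $H$ on $G$ by right translation: the homomorphism $j_G:C_0(G)\to M(B\rtimes_\delta G)$ is nondegenerate, and the identity $\hat\delta_s(j_G(f))=j_G(\rt_s(f))$ shows it is $\rt$\,--\,$\hat\delta|_H$ equivariant. Hence Theorem~5.7 of \cite{integrable} applies and shows that $\hat\delta|_H$ is proper in the sense of \cite{proper} with respect to $A_0=j_G(C_c(G))(B\rtimes_\delta G)j_G(C_c(G))$.

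Next, because $H$ acts freely on $G$, I would invoke \cite[Lemma~C.1]{aHRWproper2} to upgrade this to saturation in the sense of \cite[Definition~1.6]{proper}. Corollary~1.7 of \cite{proper} then supplies an imprimitivity bimodule, obtained by completing $A_0$ in the inner product $\langle a,b\rangle=E^{B\rtimes_\delta G}(a^*b)$; by construction this is exactly $Z(B\rtimes_\delta G,\hat\delta|_H,j_G)=\overline{A_0}$, and it implements a Morita equivalence between $(B\rtimes_\delta G)\rtimes_{\hat\delta,r}H$ and the generalised fixed-point algebra $\Fix(B\rtimes_\delta G,\hat\delta|_H,j_G)$.

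To finish, I would substitute the description of $\Fix(B\rtimes_\delta G,\hat\delta|_H,j_G)$ from the preceding proposition, namely its identification with
\[
B\rtimes_{\delta,r}(G/H)=\clsp\{j_B(b)j_G(f):b\in B,\ f\in C_0(G/H)\}
\]
inside $M(B\rtimes_\delta G)$. This converts the Morita equivalence above into the asserted one between $(B\rtimes_\delta G)\rtimes_{\hat\delta,r}H$ and $B\rtimes_{\delta,r}(G/H)$, with implementing bimodule $\overline{A_0}$, and the proof is complete.

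The one point needing care --- the main obstacle, such as it is --- is that the results of \cite{proper,integrable,hrman} are formulated for reduced coactions, whereas this section works with normal coactions. I would handle this exactly as in the discussion leading up to the previous proposition: pass to the reduction $\delta^r=(\id\otimes\lambda)\circ\delta$, use \cite[Propositions~3.3 and~2.8]{quigg-fr} to identify $B\rtimes_{\delta^r}G$ with $B\rtimes_\delta G$ equipped with the same canonical maps $(j_B,j_G)$ and the same dual action, and invoke Proposition~3.1 of \cite{kqrproper} to see that the generalised fixed-point algebra is unaffected by this replacement. Once that translation is in place every cited result applies verbatim, so no further work is required; the real content of the statement lives in the preceding proposition and in \cite{proper,integrable}.
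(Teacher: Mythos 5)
Your proposal is correct and follows essentially the same route as the paper: view $(B\rtimes_\delta G,\hat\delta|_H,j_G)$ as an object of the semi-comma category, get properness from \cite[Theorem~5.7]{integrable} (equivalently via \cite[Proposition~3.1]{kqrproper}), saturation from freeness of the $H$-action on $G$, the Morita equivalence with the generalised fixed-point algebra from \cite[Corollary~1.7]{proper}, and then substitute the identification of $\Fix(B\rtimes_\delta G,\hat\delta|_H,j_G)$ with $B\rtimes_{\delta,r}(G/H)$ from the preceding proposition. Your handling of the normal-versus-reduced coaction issue via the reduction $\delta^r$ and \cite[Propositions~3.3 and~2.8]{quigg-fr} is exactly the translation the paper performs in the discussion preceding the proposition.
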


\begin{remark}\label{ours=man}
  The algebras $(B\rtimes_{\delta}
  G)\rtimes_{\hat\delta,r}H$ and $B\rtimes_{\delta,r}(G/H)$ in
  Proposition~\ref{ahrmansfieldnormal} are the same as those in
  \cite[Theorem~3.1]{hrman} (for the reduction $\delta^r$). The discussion in \cite[\S 6]{kqrproper}
  shows that the bimodule $Z(B\rtimes_{\delta}G,\hat{\delta}|_H,j_G)$
  is the same as the bimodule $\overline{\DD}$ constructed in
  \cite[Theorem~3.1]{hrman}. Thus when $H$ is normal,
  $Z(B\rtimes_{\delta}G,\hat{\delta}|_H,j_G)$ is the usual Mansfield
  bimodule $Y_{G/H}^G$ appearing in \cite{man}
  (if $H$ is amenable) or in \cite[\S3]{kqman}.
\end{remark}

To get the naturality of this version of Mansfield imprimitivity, we
need to extend the construction of crossed products by homogeneous
spaces to a functor on $\Aca^{\nor}(G)$. Since the objects
$B\rtimes_{\delta,r}(G/H)$ are the same as $\Fix\circ \CP(B,\delta)$,
applying $\Fix\circ \CP$ to the morphisms in $\Aca^{\nor}(G)$ gives such a
functor. But to see that this functor is the same as the one used in
\cite{enchilada} when $H$ is normal, we need a more concrete
description of what it does to morphisms.

\begin{prop}\label{idcpGH}
  Suppose that ${}_{(B,\delta)}(X,\Delta)_{(C,\epsilon)}$ is a
  right-Hilbert bimodule which implements a morphism in $\Aca^{\nor}(G)$,
  and define
  \begin{equation}\label{fix=xG/H}
    X\rtimes_{\Delta,r}(G/H):=\clsp\{j_X(x)j_G^C(f):x\in X,\ f\in C_0(G/H)\}\subset M(X\rtimes_\Delta G).
  \end{equation}
  Then with the module actions and inner products from
  ${}_{M(B\rtimes_\delta G)}M(X\rtimes_\Delta G)_{M(C\rtimes_\epsilon
    G)}$, the vector space $X\rtimes_{\Delta,r}(G/H)$ becomes a
  right-Hilbert $(B\rtimes_\delta G)$\,--\,$(C\rtimes_\epsilon G)$
  bimodule. The assignments
  \[(B,\delta)\mapsto B\rtimes_{\delta,r}(G/H)\ \text{ and }\
  [X,\Delta]\mapsto [X\rtimes_{\Delta,r}(G/H)]
  \]
  form a functor $\RCP_{G/H}$ from $\Aca^{\nor}(G)$ to $\CC$ which coincides
  with $\Fix\circ\CP$.
\end{prop}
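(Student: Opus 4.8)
The plan is to show that the explicit assignments in the statement are exactly the functor $\Fix\circ\CP:\Aca^{\nor}(G)\to\CC$, where $\CP$ is the crossed-product functor $\Aca^{\nor}(G)\to\Aa(H,(C_0(G),\rt))$ discussed above and $\Fix$ is the functor of Theorem~\ref{thm-fix} with $(T,G)$ replaced by $(G,H)$. Once this is established, everything follows at once: the object map of $\Fix\circ\CP$ is $(B,\delta)\mapsto\Fix(B\rtimes_\delta G,\hat\delta|_H,j_G)=B\rtimes_{\delta,r}(G/H)$ by the preceding proposition, so on objects there is nothing more to do, while the morphism statement and the functoriality of $\RCP_{G/H}$ are automatic from the functoriality of $\Fix$ and $\CP$ together with Lemma~\ref{lem-well-defined}. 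So the real work is to compute $\Fix\circ\CP$ on a morphism $[X,\Delta]:(B,\delta)\to(C,\epsilon)$ and to recognise the answer as the class of $X\rtimes_{\Delta,r}(G/H)$ with the inherited structure.

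Put $W=X\rtimes_\Delta G$, $D=B\rtimes_\delta G$, $E=C\rtimes_\epsilon G$ and $w=\widehat\Delta|_H$, so that $\CP([X,\Delta])=[W,w]$ as a morphism $(D,\hat\delta|_H,j_G)\to(E,\hat\epsilon|_H,j_G)$ in the semi-comma category. Following the definition of $\Fix$ on morphisms, factor $[W,w]=[{}_{\K(W)}W_E,w]\,[\kappa]$ as in Corollary~\ref{factor}; then $\Fix([W,w])=[{}_{\K(W^w)}(W^w)_{\Fix E}]\,[\kappa|]$, where $W^w=pL(W)^{L(w)}q$ is computed in the linking algebra $L(W)=L({}_{\K(W)}W_E)$ as in Proposition~\ref{defgfpaib}, and $\kappa|$ is the restriction to $\Fix D$ of the left-action homomorphism $\kappa:D\to\L(W)$. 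Since $\kappa(d)$ acts as left multiplication by $d$, we have $\kappa|(a)\cdot\xi=a\xi$ for $a\in\Fix D\subseteq M(D)$ and $\xi\in W^w\subseteq M(W)$; thus $\Fix(W,w)$ is $W^w$ with \emph{all} of its module operations (including the left action) inherited from $M(W)=M(X\rtimes_\Delta G)$, and it only remains to identify the set $W^w$ with $X\rtimes_{\Delta,r}(G/H)$.

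This is the heart of the argument, and it is a linking-algebra computation. Write $L(X)$ for the linking algebra of the imprimitivity bimodule ${}_{\K(X)}X_C$, with corner projections $p_0,q_0\in M(L(X))$, and let $L(\Delta)$ be the coaction of $G$ on $L(X)$ restricting to $\mu$ on $\K(X)$, $\Delta$ on $X$ and $\epsilon$ on $C$; then $L(\Delta)$ is normal because $\delta$ and $\epsilon$ are, and it fixes $p_0,q_0$. The coaction analogue of \cite[Proposition~4.3]{aHRWproper} gives an isomorphism $L(W)=L(X\rtimes_\Delta G)\cong L(X)\rtimes_{L(\Delta)}G$ which is equivariant for $L(w)$ and $\widehat{L(\Delta)}|_H$ and carries $p,q$ to $j_{L(X)}(p_0),j_{L(X)}(q_0)$. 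Applying the preceding proposition to the normal-coaction system $(L(X),L(\Delta))$ identifies
\[
L(W)^{L(w)}=L(X)\rtimes_{L(\Delta),r}(G/H)=\clsp\{\,j_{L(X)}(m)j_G(f):m\in L(X),\ f\in C_0(G/H)\,\}
\]
as a subset of $M(L(W))$. Cutting down by $p=j_{L(X)}(p_0)$ and $q=j_{L(X)}(q_0)$, and using $p_0L(X)q_0=X$, $j_{L(X)}|_X=j_X$, and the fact that $j_{L(X)}(p_0),j_{L(X)}(q_0)$ commute with $j_G(C_0(G))$ (because $L(\Delta)$ fixes $p_0,q_0$), we obtain
\[
W^w=pL(W)^{L(w)}q=\clsp\{\,j_X(x)j_G(f):x\in X,\ f\in C_0(G/H)\,\}=X\rtimes_{\Delta,r}(G/H),
\]
with module actions and inner products inherited from $M(W)$; the same computation identifies the diagonal corners $pL(W)^{L(w)}p$ and $qL(W)^{L(w)}q$ with $\Fix D=B\rtimes_{\delta,r}(G/H)$ and $\Fix E=C\rtimes_{\epsilon,r}(G/H)$, consistently with the coefficient algebras of $\Fix(W,w)$. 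Hence $\Fix\circ\CP([X,\Delta])=[X\rtimes_{\Delta,r}(G/H)]$, and since $\Fix\circ\CP$ is a functor into $\CC$, this both shows that $X\rtimes_{\Delta,r}(G/H)$ with the inherited structure is a full right-Hilbert $D$\,--\,$E$ bimodule and shows that $\RCP_{G/H}:=\Fix\circ\CP$ is a functor.

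The step I expect to be the main obstacle is the equivariant linking-algebra identification and the attendant bookkeeping: one has to verify that the isomorphism $L(X\rtimes_\Delta G)\cong L(X)\rtimes_{L(\Delta)}G$ intertwines the two dual $H$-actions and carries corner projections to the images of corner projections, that $L(\Delta)$ really is a normal coaction so that the object-level proposition applies to $(L(X),L(\Delta))$, and that cutting the concrete spanning set down to the $(1,2)$-corner produces $\clsp\{j_X(x)j_G(f):f\in C_0(G/H)\}$ with precisely the module structure inherited from $M(X\rtimes_\Delta G)$ rather than some a priori different one. The remaining points---that $\kappa|$ yields the inherited left action, and that the class $[X\rtimes_{\Delta,r}(G/H)]$ is independent of the representative of $[X,\Delta]$---are routine given Lemma~\ref{lem-well-defined} and the functoriality already in hand.
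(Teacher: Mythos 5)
Your proposal is correct and follows essentially the same route as the paper's proof: reduce to showing that $\Fix(X\rtimes_\Delta G,\hat\Delta|_H)$ coincides with $X\rtimes_{\Delta,r}(G/H)$ inside $M(X\rtimes_\Delta G)$, use the canonical equivariant identification $L(X\rtimes_\Delta G)\cong L(X)\rtimes_{L(\Delta)}G$ (the paper cites \cite[Lemma~3.10]{enchilada} for this), apply the object-level formula \eqref{spanforBG/H} to the linking-algebra coaction, and cut down by the corner projections. Your extra remarks on the normality of $L(\Delta)$ and on the left action via $\kappa|$ are details the paper leaves implicit, so there is nothing substantive to correct.
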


\begin{proof}
  We know from \cite[Theorem~3.13]{enchilada} and
  Theorem~\ref{thm-fix} that $\Fix\circ\CP$ is a functor. So, since
  $\Fix(B\rtimes_\delta G,\hat\delta|,j_G)$ and
  $B\rtimes_{\delta,r}(G/H)$ are the same subset of $M(B\rtimes_\delta
  G)$, and similarly for $(C,\epsilon)$, it suffices to prove that
  $\Fix(X\rtimes_\Delta G,\hat\Delta|_H)\subset M(X\rtimes_\Delta G)$
  coincides with $X\rtimes_{\Delta,r}(G/H)$. For this calculation we
  adopt the notation of \cite[page~65]{enchilada}.

  The bimodule $\Fix(X\rtimes_\Delta G,\hat\Delta|_H)$ has as its
  underlying set the space
  \[
  (X\rtimes_{\Delta} G)^{\hat\Delta|_H}=p\big(\Fix(L(X\rtimes_\Delta
  G),L(\hat\Delta|_H), j_G^L)\big)q,
  \]
  where
  $j_G^L:j_G^{\K}\oplus j_G^C$. The
  canonical identification of $L(X\rtimes_\Delta G)$ with
  $L(X)\rtimes_{L(\Delta)} G$ (see \cite[Lemma~3.10]{enchilada})
  intertwines $L(\hat\Delta|_H)$ and $L(\Delta)^\wedge|_H$. The
  formula in \eqref{spanforBG/H} shows that
  \[
  \big(L(X)\rtimes_{L(\Delta)} G\big)^{L(\Delta)^\wedge|_H}
  =\clsp\{j_L(d)j_G(f):d\in L(X),\ f\in C_0(G/H)\}.
  \]
  When we pull $j_L\big(\begin{smallmatrix}k&x\\
    *&c\end{smallmatrix}\big)$ back to $L(X\rtimes_\delta G)$, we get
  the matrix $\big(\begin{smallmatrix}j_{\K}(k)&j_X(x)\\
    *&j_C(c)\end{smallmatrix}\big)$, and $j_G$ pulls back to
  $j_G^L$. So $L(X\rtimes_\Delta G)^{L(\hat\Delta|_H)}$ is
  \[
  \clsp\Big\{\begin{pmatrix}
    j_{\K}(k)j_G^{\K}(f)&j_X(x)j_G^C(f)\\
    *&j_C(c)j_G^C(f)
  \end{pmatrix}:
  \begin{pmatrix}k&x\\ *&c\end{pmatrix}\in L(X),\ f\in C_0(G/H)\Big\}.
  \]
  Thus
  \[
  (X\rtimes_{\Delta} G)^{\hat\Delta|_H} =\clsp\Big\{p\begin{pmatrix}
    j_{\K}(k)j_G^{\K}(f)&j_X(x)j_G^C(f)\\
    *&j_C(c)j_G^C(f)
  \end{pmatrix}q\Big\},
  \]
  which is the right-hand side of \eqref{fix=xG/H}.
\end{proof}

Now Corollary~\ref{1stMan} shows that Mansfield imprimitivity for
normal coactions (as in Proposition~~\ref{ahrmansfieldnormal}) is
natural. To sum up:

\begin{thm}\label{newnatforcoact}
  Suppose that $H$ is a closed subgroup of a locally compact group
  $G$. Then Rieffel's bimodules
  \[
  \{Z(B\rtimes_\delta G,\hat\delta|_H,j_G):(B,\delta)\in \Aca^{\nor}(G)\}
  \]
  implement a natural isomorphism between the functors $\RCP_H\circ
  \CP$ and $\RCP_{G/H}$ from $\Aca^{\nor}(G)$ to $\CC$.
\end{thm}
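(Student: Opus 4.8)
The plan is to read this off immediately from Corollary~\ref{1stMan} and Proposition~\ref{idcpGH}. Corollary~\ref{1stMan} says that the family $\{Z(B\rtimes_\delta G,\hat\delta|_H,j_G)\}$ implements a natural isomorphism between the functors $\RCP_H\circ\CP$ and $\Fix\circ\CP$ from $\Aca^{\nor}(G)$ to $\CC$, and Proposition~\ref{idcpGH} says that the functor $\Fix\circ\CP$ coincides --- on objects and on morphisms --- with the homogeneous-space crossed-product functor $\RCP_{G/H}$. So one simply substitutes $\RCP_{G/H}$ for $\Fix\circ\CP$ in the statement of Corollary~\ref{1stMan}.

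To present this cleanly I would spell out the two points that the substitution rests on. First, for each object $(B,\delta)$ the bimodule $Z(B\rtimes_\delta G,\hat\delta|_H,j_G)$ really is an isomorphism in $\CC$ between $\RCP_H\circ\CP(B,\delta)=(B\rtimes_\delta G)\rtimes_{\hat\delta,r}H$ and $\RCP_{G/H}(B,\delta)=B\rtimes_{\delta,r}(G/H)$; this is Proposition~\ref{ahrmansfieldnormal}, since the imprimitivity bimodules are exactly the isomorphisms in $\CC$. Second, for each morphism $[X,\Delta]:(B,\delta)\to(C,\epsilon)$ in $\Aca^{\nor}(G)$ the naturality square with sides $Z(B\rtimes_\delta G,\hat\delta|_H,j_G)$, $Z(C\rtimes_\epsilon G,\hat\epsilon|_H,j_G)$, $\RCP_H\circ\CP([X,\Delta])$ and $\RCP_{G/H}([X,\Delta])$ commutes in $\CC$; using Proposition~\ref{idcpGH} to rewrite $\RCP_{G/H}([X,\Delta])$ as $(\Fix\circ\CP)([X,\Delta])$, this is exactly the naturality square furnished by Corollary~\ref{1stMan}.

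There is no real obstacle remaining at this stage: the substance is in Theorem~\ref{thm-main} (whence Corollary~\ref{1stMan}) and in the concrete identification of $\Fix\circ\CP$ with $\RCP_{G/H}$ carried out in Proposition~\ref{idcpGH}. The only thing to be careful about is the bookkeeping --- checking that the object and morphism maps of the two functors are literally equal, not merely isomorphic --- and that is precisely what Proposition~\ref{idcpGH} delivers, so nothing further is needed.
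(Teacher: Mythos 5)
Your proposal is correct and is exactly the paper's argument: the theorem is obtained by substituting $\RCP_{G/H}$ for $\Fix\circ\CP$ in Corollary~\ref{1stMan}, the substitution being justified by the literal identification of functors in Proposition~\ref{idcpGH} (with Proposition~\ref{ahrmansfieldnormal} giving the object-level Morita equivalences). Nothing is missing.
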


\begin{remark}
  Suppose that $H$ is normal. Then it follows from
  Proposition~\ref{idcpGH} that the bimodule
  $X\rtimes_{\Delta,r}(G/H)$ is the same as the one used in the
  construction of the crossed-product functor in
  \cite[Theorem~3.13]{enchilada} (see
  \cite[Proposition~3.9]{enchilada}). We saw in Remark~\ref{ours=man}
  that the imprimitivity bimodules $Z(B\rtimes_\delta
  G,\hat\delta|_H,j_G)$ coincide with the Mansfield bimodules
  $Y_{G/H}^G(B,\delta)$ which implement the natural isomorphism. Thus
  Theorem~\ref{newnatforcoact} is a direct generalisation of
  \cite[Theorem~4.3]{enchilada} to non-normal subgroups.
\end{remark}

\section{Proper actions on graph algebras}

Let $E=(E^0,E^1,r,s)$ be a directed graph. We assume as in \cite[\S6]{kps} that $E$ is row-finite and has no sources, and in general we use the conventions of \cite{cbms}. In particular, the graph algebra $C^*(E)$ is generated by a universal Cuntz-Krieger $E$-family $\{s_e,\;p_v:e\in E^1,\;v\in E^0\}$, and
\begin{equation}\label{defX0}
X_0(E):=\newspan\{s_\mu s_\nu^*:\mu,\nu\in E^*\}
\end{equation}
is a dense $*$-subalgebra of $C^*(E)$.

An action of a group $G$ on $E$ consists of actions on $E^0$ and $E^1$ which preserve the range and source maps $r$ and $s$, and the action is \emph{free} if the action on $E^0$ is free. An action of $G$ on $E$ induces an action $\alpha=\alpha^E$ of $G$ on $C^*(E)$ such that $\alpha_t(s_e)=s_{e\cdot t^{-1}}$. It was shown in \cite[\S1]{pr} that if $G$ acts freely on $E$, then the induced action $\alpha$ is proper and saturated with respect to the subalgebra $X_0(E)$, and the generalised fixed-point algebra is isomorphic to the $C^*$-algebra of the quotient graph $E/G$ \cite[Corollary~1.5]{pr}. Thus Rieffel's theory gives an imprimitivity bimodule $Z(E,G)$ which implements a Morita equivalence between $C^*(E)\rtimes_{\alpha,r}G$ and $C^*(E/G)$ \cite[Theorem~1.6]{pr} (thus giving a new proof of a theorem of Kumjian and Pask \cite{kpaction}).

Here we formulate a naturality result for the Morita equivalence of \cite{pr}, using a category of directed graphs recently introduced by Kumjian, Pask and Sims \cite{kps}. If $E$ and $F$ are directed graphs, an \emph{$E$--$F$ morph} is a countable set $X$ together with maps $r=r^X:X\to E^0$ and $s=s^X:X\to F^0$, and a bijection $b=b^X$ of the fibre product $X*F^1:=\{(x,f):s^X(x)=r^F(f)\}$ onto $E^1*X=\{(e,y):s^E(e)=r^X(y)\}$ such that
\[
b(x,f)=(e,y)\Longrightarrow s^X(y)=s^F(f)\text{ and } r^E(e)=r^X(x).
\]
Proposition~6.1 of \cite{kps} says that there is a category $\Graph$ in which the objects are row-finite directed graphs with no sources, and the morphisms from $E$ to $F$ are the isomorphism classes $[X]$ of $E$--$F$ morphs such that $s^X$ and $r^X$ are surjective and $r^X$ is finite-to-one. It is straightforward to add actions of $G$ on morphs (just demand that they preserve all the structure) to get a category $\AGraph(G)$ in which the objects $(E,G)$ consist of a directed graph $E$ with a right action of $G$. We are also interested in the full subcategory $\FAGraph(G)$ in which the actions of $G$ on objects are free.

Kumjian, Pask and Sims  showed that for each $E$--$F$ morph $X$ there is a right-Hilbert $C^*(E)$\,--\,$C^*(F)$ bimodule $C^*(X)$, whose isomorphism class depends only on the isomorphism class of $X$, and that the assignments $E\mapsto C^*(E)$, $[X]\mapsto [C^*(X)]$ form a functor\footnote{In \cite{kps}, the category $\Graph$ is denoted $\mathcal{M}_1^{\maltese}$, and the functor $\CK$ is denoted $\mathcal{H}$. We have chosen our name because $\CK$ associates to each graph its Cuntz-Krieger algebra $C^*(E)$.} $\CK$ from $\Graph$ to the category $\CC$ (see \cite[Theorem~6.6]{kps}; we discuss their construction  in the proof of Proposition~\ref{fix=quot}). If $(E,G)$ and $(F,G)$ are objects in $\AGraph(G)$, then an action of $G$ on an $E$--$F$ morph $X$ induces an $\alpha^E$--$\alpha^F$ compatible action $\alpha^X$ of $G$ on $C^*(X)$, and hence we can extend $\CK$ to a functor $\CK(G)$ from $\AGraph(G)$ to $\Aa(G)$. On the other hand, passing to the quotient graph also gives a functor $\Qq^G$ from $\AGraph(G)$ to $\Graph$. We can now formulate our theorem:

\begin{thm}\label{prnatural}
The assignment $(E,G)\mapsto Z(E,G)$ implements a natural isomorphism between the functors $\RCP\circ \CK(G)$ and $\CK\circ \Qq^G$ from $\FAGraph(G)$ to $\CC$.
\end{thm}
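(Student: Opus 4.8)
The plan is to deduce Theorem~\ref{prnatural} from Theorem~\ref{thm-main} by realising both $\CK(G)$ and $\Qq^G$ inside semi-comma categories built from the discrete vertex sets, and then matching $\RCP$ and $\Fix$ with the two functors in the statement. First I would set up the semi-comma data. For a graph $E$ carrying a free action of $G$ the vertex projections $p_v$ lie in $M(C^*(E))$ and their finite sums form an approximate unit, so $\delta_v\mapsto p_v$ defines a nondegenerate homomorphism $\phi_E\colon c_0(E^0)=C_0(E^0)\to M(C^*(E))$, and since $\alpha^E_s(p_v)=p_{v\cdot s^{-1}}$ it is $\rt$\,--\,$\alpha^E$ equivariant. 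Because $E^0$ is discrete and $G$ acts freely, the action of $G$ on $E^0$ is automatically proper, so $(C^*(E),\alpha^E,\phi_E)$ is an object of the semi-comma category $\Aa(G,(c_0(E^0),\rt))$; by \cite[Theorem~5.7]{integrable} (or directly by \cite[\S1]{pr}) the action $\alpha^E$ is then proper and saturated with respect to $A_0=\phi_E(C_c(E^0))C^*(E)\phi_E(C_c(E^0))$. Given a morphism $[X]\colon(E,G)\to(F,G)$ in $\FAGraph(G)$ I would work in the single semi-comma category $\Aa(G,(c_0(E^0\sqcup F^0),\rt))$ over the discrete free $G$-space $T=E^0\sqcup F^0$: extending $\phi_E$ and $\phi_F$ by zero across the other summand still gives objects (and does not alter $A_0$, $\Fix$, or Rieffel's bimodule $Z$, which only see the relevant summand), and since morphisms of the semi-comma category are exactly those of $\Aa(G)$, $\CK(G)[X]=[C^*(X),\alpha^X]$ is a morphism between them --- here one uses that the morphs allowed in $\Graph$ have surjective source map, which is precisely what makes $C^*(X)$ full as a right Hilbert $C^*(F)$-module.

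Next I would identify the two sides. On the reduced-crossed-product side there is nothing to prove: $\RCP\circ\CK(G)$ has object map $(E,G)\mapsto C^*(E)\rtimes_{\alpha^E,r}G$ and morphism map $[X]\mapsto[C^*(X)\rtimes_{\alpha^X,r}G]$ by \cite[Theorem~3.7]{enchilada}, and this is exactly $\RCP$ of the semi-comma object and morphism. On the fixed-point side, the object-level identification $\Fix(C^*(E),\alpha^E,\phi_E)=C^*(E/G)$ comes from \cite[Corollary~1.5]{pr}: a short computation gives $\phi_E(f)X_0(E)\phi_E(g)\subseteq X_0(E)$ and $X_0(E)=\newspan\{\phi_E(f)X_0(E)\phi_E(g):f,g\in C_c(E^0)\}$, so by norm-continuity of $a\mapsto E^{C^*(E)}(\phi_E(f)a\phi_E(g))$ the closure $\Fix(C^*(E),\alpha^E,\phi_E)=\overline{E^{C^*(E)}(A_0)}$ equals $\overline{E^{C^*(E)}(X_0(E))}$, which is Rieffel's generalised fixed-point algebra for $X_0(E)$ and hence $C^*(E/G)$; the same argument identifies $Z(C^*(E),\alpha^E,\phi_E)$ with the bimodule $Z(E,G)$ of \cite[Theorem~1.6]{pr}. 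The substantial point is the morphism-level statement $\Fix([C^*(X),\alpha^X])=[C^*(X/G)]=(\CK\circ\Qq^G)[X]$, which I would isolate as Proposition~\ref{fix=quot}. To prove it I would follow the pattern of Proposition~\ref{idcpGH} and the proof of Theorem~\ref{thm-fix}: take the canonical decomposition $[C^*(X),\alpha^X]=[{}_{\K(C^*(X))}C^*(X)_{C^*(F)},\alpha^X]\,[\kappa]$, unpack the Kumjian--Pask--Sims description of $C^*(X)$ and of its completion, realise the linking algebra of the imprimitivity bimodule ${}_{\K(C^*(X))}C^*(X)_{C^*(F)}$ as (a corner of) the $C^*$-algebra of an auxiliary row-finite graph assembled from $E$, $F$ and the data of $X$, on which $G$ acts freely, observe that this assembly commutes with passing to quotients, and then combine \cite[Corollary~1.5]{pr} applied to the auxiliary graph with Proposition~\ref{defgfpaib} to read the bimodule identification off the corners, keeping track of the left action coming from $\kappa$.

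Finally I would assemble the pieces. Applying Theorem~\ref{thm-main} in $\Aa(G,(c_0(E^0\sqcup F^0),\rt))$ to the morphism $[C^*(X),\alpha^X]$ produces a commuting square in $\CC$ whose horizontal edges are $Z(C^*(E),\alpha^E,\phi_E)$ and $Z(C^*(F),\alpha^F,\phi_F)$, whose left edge is $\RCP([C^*(X),\alpha^X])$, and whose right edge is $\Fix([C^*(X),\alpha^X])$; by the identifications above this is precisely the square expressing naturality of $(E,G)\mapsto Z(E,G)$ between $\RCP\circ\CK(G)$ and $\CK\circ\Qq^G$ on $\FAGraph(G)$, and since each $Z(E,G)$ is an isomorphism in $\CC$ by \cite[Theorem~1.6]{pr}, this natural transformation is a natural isomorphism. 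I expect the main obstacle to be Proposition~\ref{fix=quot}: verifying that the Kumjian--Pask--Sims construction is genuinely $G$-equivariant and compatible with quotients, and that the canonical decomposition of $[C^*(X),\alpha^X]$ --- in which $\K(C^*(X))$ need not literally be $C^*(E)$ --- is carried through correctly.
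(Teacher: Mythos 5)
Your proposal is correct and follows essentially the paper's own route: apply Theorem~\ref{thm-main} in a semi-comma category built over the discrete free vertex data, identify $\Fix$ and Rieffel's bimodule with $C^*(E/G)$ and $Z(E,G)$ via the Pask--Raeburn averaging process, and identify $\Fix$ of $[C^*(X),\alpha^X]$ with $C^*(X/G)$ by working in the Kumjian--Pask--Sims linking graph of the morph. The only differences are minor: the paper fixes $S=G$ and chooses equivariant surjections $E^0\to G$, $F^0\to G$ instead of your per-morphism $T=E^0\sqcup F^0$ with extension by zero (both are harmless, since $A_0$, $\Fix$ and $Z$ are unchanged), and your ``auxiliary row-finite graph'' is really the KPS linking $2$-graph, so the quotient identification needs the $2$-graph analogue of \cite[Corollary~1.5]{pr} rather than that result verbatim.
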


To apply Theorem~\ref{thm-main} we need an appropriate semi-comma category. Fix a countable set $S$ with a free right action of $G$. Then we define the \emph{semi-comma category} $\AGraph(G,S)$ to be the category whose objects are triples $(E,G,\pi)$, where $(E,G)$ is an object of $\AGraph(G)$ and $\pi\colon E^0\to S$ is an equivariant surjection, and whose morphisms from $(E,G,\pi)$ to $(F,G,\rho)$ are  the morphisms from $(E,G)$ to $(F,G)$ in $\AGraph(G)$.  Then $\pi$ induces an equivariant nondegenerate homomorphism of $c_0(S)$ into $M(c_0(E^0))$, and hence an equivariant nondegenerate homomorphism $\pi^*\colon c_0(S)\to M(C^*(E))$.  Thus we obtain a functor $\CK(G)$ from $\AGraph(G,S)$ into the semi-comma category $\Aa(G,(c_0(S),\rt))$ which maps objects $(E,G,\pi)$ to $(C^*(E),\alpha^E,\pi^*)$. Applying Theorem~\ref{thm-main} gives:

\begin{prop}\label{applymainthm}
Rieffel's bimodules implement a natural isomorphism
\[
(E,G,\pi)\mapsto Z(C^*(E),\alpha,\pi^*)
\]
between the functors $\RCP\circ\CK(G)$ and $\Fix\circ\CK(G)$ from $\AGraph(G,S)$ to $\CC$.
\end{prop}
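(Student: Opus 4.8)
The plan is to derive Proposition~\ref{applymainthm} as a formal consequence of Theorem~\ref{thm-main}, pulled back along the functor $\CK(G)\colon\AGraph(G,S)\to\Aa(G,(c_0(S),\rt))$ constructed in the paragraph preceding the proposition.

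The first step is to check that Theorem~\ref{thm-main} applies to the pair $(S,G)$ in place of $(T,G)$. We give $S$ the discrete topology, so that $S$ is a locally compact space and $c_0(S)=C_0(S)$. The action of $G$ on $S$ is free by hypothesis, and it is automatically proper because $S$ is discrete: for any finite subset $F\subseteq S$, freeness forces each set $\{g\in G:g\cdot s\in F\}$ (with $s\in F$) to be finite, so the preimage of $F\times F$ under $(g,s)\mapsto(g\cdot s,s)$ is a finite, hence compact, subset of $G\times S$. Consequently Theorem~\ref{thm-main} yields a natural isomorphism $Z$ between the functors $\RCP$ and $\Fix$ from $\Aa(G,(c_0(S),\rt))$ to $\CC$, with components $Z(A,\alpha,\phi)\colon A\rtimes_{\alpha,r}G\to\Fix(A,\alpha,\phi)$.

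The second step is purely formal: compose $Z$ with the functor $\CK(G)$. Recall that $\CK(G)$ sends an object $(E,G,\pi)$ to $(C^*(E),\alpha^E,\pi^*)$ and acts on morphisms exactly as the functor $\CK(G)\colon\AGraph(G)\to\Aa(G)$ does, since by definition the morphisms of $\AGraph(G,S)$ are those of $\AGraph(G)$ and the morphisms of $\Aa(G,(c_0(S),\rt))$ are those of $\Aa(G)$. Composing a natural isomorphism with a functor is again a natural isomorphism, so the assignment $(E,G,\pi)\mapsto Z(C^*(E),\alpha^E,\pi^*)$ is a natural isomorphism between $\RCP\circ\CK(G)$ and $\Fix\circ\CK(G)$ from $\AGraph(G,S)$ to $\CC$: concretely, for a morphism $[X,\Delta]$ of $\AGraph(G,S)$ the required naturality square is precisely the naturality square of Theorem~\ref{thm-main} for the morphism $\CK(G)([X,\Delta])$ of $\Aa(G,(c_0(S),\rt))$, and each component $Z(C^*(E),\alpha^E,\pi^*)$ is an imprimitivity bimodule, hence an isomorphism in $\CC$.

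I expect no genuine obstacle: all the substantive work has already been done in Theorems~\ref{thm-fix} and~\ref{thm-main}, and what remains here is the immediate observation that a free action on a discrete set is proper---so that Theorem~\ref{thm-main} is in force---together with the routine verification, essentially carried out in the text preceding the proposition, that $\CK(G)$ genuinely lands in the semi-comma category $\Aa(G,(c_0(S),\rt))$.
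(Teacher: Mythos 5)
Your proposal is correct and matches the paper's treatment: the paper derives Proposition~\ref{applymainthm} exactly by applying Theorem~\ref{thm-main} with $T=S$ (discrete) and composing with the functor $\CK(G)$ into the semi-comma category $\Aa(G,(c_0(S),\rt))$. Your explicit check that a free action on a discrete set is automatically proper is a detail the paper leaves implicit, and it is a welcome addition rather than a divergence.
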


To deduce Theorem~\ref{prnatural} from Proposition~\ref{applymainthm}, we have to relate Rieffel's bimodules and the functor $\Fix$ to the bimodules and $C^*$-algebras appearing in \cite{pr}. Since $G$ acts freely on $S$, the existence of $\pi$ implies that $G$ acts freely on $E$, so we know from \cite{pr} that $G$ acts properly on $C^*(E)$ with respect to the subalgebra $X_0(E)$ of \eqref{defX0}.

\begin{lemma}\label{idibs}
Suppose $(E,G,\pi)$ is an object in $\AGraph(G,S)$. The subalgebra $X_0(E)$ is contained in $A_0:=c_c(S)C^*(E)c_c(S)$, and the generalised fixed-point algebra $C^*(E)^\alpha$ constructed in \cite[\S1]{pr} and $\Fix(C^*(E),\alpha,\pi^*)$ are the same subalgebra of $M(C^*(E))$.  The completion $Z(C^*(E),\alpha,\pi^*)$ is also a completion of $X_0(E)$, and when we  identify the two completions  they are  equal as $(C^*(E)\rtimes_{\alpha,r}G)$\,--\,$C^*(E)^\alpha$ imprimitivity bimodules.
\end{lemma}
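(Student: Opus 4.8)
The plan is to exploit the fact that $(C^*(E),\alpha)$ now carries \emph{two} proper and saturated structures in the sense of \cite{proper}: the one of \cite[\S1]{pr}, with respect to $X_0(E)$, and the one supplied by \cite[Theorem~5.7]{integrable} (and used to build $\Fix$), with respect to $A_0=\pi^*(c_c(S))C^*(E)\pi^*(c_c(S))$; both are dense $\alpha$-invariant $*$-subalgebras, and the action is saturated over each because $G$ acts freely on $E$ and on $S$. I would reduce the lemma to three assertions: that $X_0(E)\subseteq A_0$; that the map $E$ of \eqref{defE} on $A_0$ restricts on $X_0(E)$ to the averaging map used in \cite{pr}; and that $X_0(E)$ is dense in $A_0$ both for the $C^*$-norm and for the inner-product norm $a\mapsto\|E(a^*a)\|^{1/2}$. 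Granted these, the chain $\Fix(C^*(E),\alpha,\pi^*)=\overline{E(A_0)}=\overline{E(X_0(E))}=C^*(E)^\alpha$ and the coincidence of the two completions (together with all their module structure) follow formally.

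For the inclusion, the key point is that $\pi^*$ acts ``diagonally'' on the spanning elements $s_\mu s_\nu^*$ of $X_0(E)$. Writing $f_v:=\delta_{\pi(v)}\in c_c(S)$, one has $\pi^*(f_v)=\sum_{w\in\pi^{-1}(\pi(v))}p_w$ (strictly convergent in $M(C^*(E))$), and since $p_w s_\mu=0$ unless $w=r(\mu)$ this gives $\pi^*(f_{r(\mu)})\,s_\mu=s_\mu$ and, taking adjoints, $s_\nu^*\,\pi^*(f_{r(\nu)})=s_\nu^*$; hence $s_\mu s_\nu^*=\pi^*(f_{r(\mu)})\,(s_\mu s_\nu^*)\,\pi^*(f_{r(\nu)})\in A_0$. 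More generally the same computation shows $\pi^*(f)\,(s_\mu s_\nu^*)\,\pi^*(g)=f(\pi(r(\mu)))\,g(\pi(r(\nu)))\,s_\mu s_\nu^*$ for all $f,g\in c_c(S)$, so that $\pi^*(c_c(S))\,X_0(E)\,\pi^*(c_c(S))=X_0(E)$. Combining this with the density of $X_0(E)$ in $C^*(E)$, any $a=\sum_i\pi^*(f_i)c_i\pi^*(g_i)\in A_0$ is the $C^*$-norm limit of $a_n:=\sum_i\pi^*(f_i)x_i^{(n)}\pi^*(g_i)\in X_0(E)$, where $x_i^{(n)}\to c_i$ in $C^*(E)$.

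To match the two maps $E$, I would note that both satisfy $\omega(E(x))=\int_G\omega(\alpha_t(x))\,dt$ for every $\omega\in C^*(E)^*$, and that this property determines the multiplier $E(x)$ since $C^*(E)^*$ separates points of $M(C^*(E))$; so the maps agree on $X_0(E)$. For the remaining density claims I would use that, for fixed $f,g\in c_c(S)$, the map $h\mapsto E(\pi^*(f)\,h\,\pi^*(g))$ is norm continuous on $M(C^*(E))$ (\cite[Corollary~3.6(3)]{quigg}): applied to the approximants above this gives $E(a_n)\to E(a)$, hence $E(A_0)\subseteq\overline{E(X_0(E))}$ and therefore $\Fix(C^*(E),\alpha,\pi^*)=\overline{E(A_0)}=\overline{E(X_0(E))}=C^*(E)^\alpha$; and expanding $(a-a_n)^*(a-a_n)$ so that only a norm-null middle factor sits between fixed copies of $\pi^*(g_i)^*$ and $\pi^*(g_j)$ gives $\|E((a-a_n)^*(a-a_n))\|\to0$, i.e.\ $a_n\to a$ in the inner-product norm. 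Thus $Z(C^*(E),\alpha,\pi^*)$, the completion of $A_0$, is also a completion of $X_0(E)$ --- so it is Rieffel's bimodule $Z(E,G)$ of \cite[Theorem~1.6]{pr} --- and since on the common dense subspace $X_0(E)$ the left $(C^*(E)\rtimes_{\alpha,r}G)$-action, the right $C^*(E)^\alpha$-action and both inner products are given by the same formulas in the two constructions and extend continuously, the completions are equal as $(C^*(E)\rtimes_{\alpha,r}G)$\,--\,$C^*(E)^\alpha$ imprimitivity bimodules.

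The main obstacle, and the reason the argument must be organised in this sandwiched form, is that $E$ is \emph{not} norm continuous on all of $M(C^*(E))$: only $h\mapsto E(\pi^*(f)\,h\,\pi^*(g))$ is, for fixed $f,g$. Every approximation therefore has to be kept in the shape $\pi^*(f)\,h\,\pi^*(g)$ with the cutting-down functions on the outside, which is precisely what the identity $\pi^*(c_c(S))\,X_0(E)\,\pi^*(c_c(S))=X_0(E)$ guarantees, both for the approximants $a_n$ and for the products $(a-a_n)^*(a-a_n)$. The other point that needs a word --- identifying the $E$ of \eqref{defE} with the averaging map of \cite{pr} --- is routine once one observes that a multiplier of $C^*(E)$ is determined by its images under the functionals in $C^*(E)^*$.
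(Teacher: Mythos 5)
Your proposal is correct, and for most of the lemma it runs along the same lines as the paper: the same computation $s_\mu s_\nu^*=\pi^*(\delta_{\pi(r(\mu))})s_\mu s_\nu^*\pi^*(\delta_{\pi(r(\nu))})$ gives $X_0(E)\subseteq A_0$, the identification $E^A|_{X_0(E)}=I_G$ is handled in essentially the same way, and the equality $\Fix(C^*(E),\alpha,\pi^*)=\overline{E^A(A_0)}=\overline{E^A(X_0(E))}=C^*(E)^\alpha$ is obtained in both arguments by sandwiching norm approximations between fixed elements of $\pi^*(c_c(S))$ and invoking the norm continuity of $a\mapsto E(fag)$ from \cite[Corollary~3.6(3)]{quigg}. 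Where you genuinely diverge is in the final identification of the two completions: you prove directly that $X_0(E)$ is dense in $A_0$ for the inner-product norm, by expanding $(a-a_n)^*(a-a_n)$ so that the norm-null factor stays pinned between the fixed cutting functions $\pi^*(g_i)^*$ and $\pi^*(g_j)$, and then conclude that the two completions literally coincide. The paper avoids this estimate: it only observes that the inner products and module actions agree on $X_0(E)$, views $\overline{X_0(E)}$ as a closed submodule of $Z(C^*(E),\alpha,\pi^*)$, and then quotes the Rieffel correspondence \cite[Theorem~3.22]{tfb} --- since the right-hand inner product on the submodule already generates all of $C^*(E)^\alpha$ (by the fixed-point algebra identification just proved), the submodule must be everything. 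Your route is more elementary and yields the slightly stronger fact that $X_0(E)$ is dense in $A_0$ in the Hilbert-module norm; the paper's route trades that estimate for an appeal to the general machinery of the Rieffel correspondence, reusing the fullness already in hand. Both are valid, and your use of the identity $\pi^*(c_c(S))X_0(E)\pi^*(c_c(S))=X_0(E)$ to keep every approximant in the required sandwiched form is exactly the right technical point to isolate.
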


\begin{proof}
The algebra $c_c(S)$ is spanned by the point masses $\delta_s$, and $\pi^*(\delta_s)$ is the strictly convergent sum $\sum_{\pi(v)=s} p_v$. Since every
\[
s_\mu s_\nu^*=p_{r(\mu)}s_\mu s_\nu^*p_{r(\nu)}=\pi^*(\delta_{\pi(r(\mu))})s_\mu s_\nu^*\pi^*(\delta_{\pi(r(\nu))})
\]
belongs to $A_0$, we have $X_0(E)\subset A_0$. For every $fag\in A_0$ there is a finite set $K\subset S$ such that $fag=\pi^*(\chi_K)fag\pi^*(\chi_K)$, and the continuity of $b\mapsto E^A(\pi^*(\chi_K)b\pi^*(\chi_K))$ (from \cite[Corollary~3.6]{quigg}) shows that $E^A(A_0)$ is a subset of the closure $\overline{E^A(X_0(E))}$. Since $E^A|_{X_0(E)}$ is the same as the averaging process $I_G$ used in \cite{pr} (they both multiply elements of $X_0(E)$ in the same way), $\overline{E^A(X_0(E))}$ is the generalised fixed-point algebra $C^*(E)^\alpha$ in \cite{pr}, and we can deduce that $\Fix(C^*(E),\alpha,\pi^*)$, which is by definition the closure of $E^A(A_0)$ in $M(C^*(E))$, coincides with $C^*(E)^\alpha$.

Since the averaging processes used to define them are the same, the $C^*(E)^\alpha$- and $(\Fix C^*(E))$-valued inner products coincide on $X_0(E)$, and both the completions $\overline{X_0(E)}$ of $X_0(E)$ and $Z(C^*(E),\alpha,\pi^*)$ of $A_0$ in the norms defined by this inner product are right Hilbert $C^*(E)^\alpha$-modules. The left action of $b\in c_c(G,C^*(E))\subset C^*(E)\rtimes_{\alpha,r}G$ on $x\in X_0(E)$ is given in both $X_0(E)$ and $A_0$ by $b\cdot x=\sum_{t\in G} b(t)\alpha_t(x)$, so  $\overline{X_0(E)}$ and $Z(C^*(E),\alpha,\pi^*)$ are $(C^*(E)\rtimes_{\alpha,r}G)$\,--\,$C^*(E)^\alpha$ imprimitivity bimodules for the same actions, and we can view the completion of $X_0(E)$  as the closure $\overline{X_0(E)}$ of $X_0(E)$ in $Z(C^*(E),\alpha,\pi^*)$. Since the right-hand inner product on the submodule generates $C^*(E)^\alpha$, it follows from the Rieffel correspondence that $\overline{X_0(E)}=Z(C^*(E),\alpha,\pi^*)$.
\end{proof}

Our next step is to show that we can replace $\Fix\circ\CK(G)$ in Proposition~\ref{applymainthm} by $\CK\circ \Qq^G$. We know from \cite[Proposition~1.4]{pr} that for each system $(E,G)$ in $\FAGraph(G)$, the elements $P_{v\cdot G}:=I_G(p_v)$ and $S_{e\cdot G}:=I_G(s_e)$ form a Cuntz-Krieger $(E/G)$-family in $C^*(E)^\alpha$, and the resulting homomorphism $\phi_G^E:C^*(E/G)\to C^*(E)^\alpha$ is an isomorphism \cite[Corollary~1.5]{pr}. When $(E,G,\pi)$ is an object in the semi-comma category, we can use Lemma~\ref{idibs} to view $\phi_G^E$ as an isomorphism onto $\Fix(C^*(E),\alpha,\pi^*)$; when we use this isomorphism to view $Z(C^*(E),\alpha,\pi^*)$ as a $(C^*(E)\rtimes_{\alpha,r}G)$\,--\,$C^*(E/G)$ imprimitivity bimodule, we recover the bimodule $Z(E,G)$ which implements the equivalence of \cite[Theorem~1.6]{pr} and appears in Theorem~\ref{prnatural}.

\begin{prop}\label{fix=quot}
Suppose that $G$ acts freely on $S$. Then the assignments $(E,G,\pi)\mapsto \phi^E_G$ form a natural isomorphism between the functors $\Fix\circ\CK(G)$ and $\CK\circ \Qq^G$ from $\AGraph(G,S)$ to $\CC$.
\end{prop}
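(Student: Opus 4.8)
The plan is to establish the two parts of ``natural isomorphism'' in turn: that each component $\phi^E_G$ is an isomorphism in $\CC$, and that the naturality squares commute. The first part is essentially free. By \cite[Corollary~1.5]{pr}, $\phi^E_G$ is an isomorphism of $C^*$-algebras from $C^*(E/G)$ onto the generalised fixed-point algebra $C^*(E)^{\alpha^E}$ of \cite[\S1]{pr}, and by Lemma~\ref{idibs} this algebra equals $\Fix(C^*(E),\alpha^E,\pi^*)$; hence the morphism $[\phi^E_G]$ induced by $\phi^E_G$ is invertible in $\CC$. So everything reduces to a statement about morphisms: given a morphism $[X]\colon (E,G,\pi)\to (F,G,\rho)$ in $\AGraph(G,S)$, realised by an $E$--$F$ morph $X$ carrying a free action of $G$, we must produce a right-Hilbert bimodule isomorphism $\Psi_X$ of $C^*(X/G)$ onto $\Fix(C^*(X),\alpha^X)$ which is compatible with $\phi^E_G$ on the left and with $\phi^F_G$ on the right and on the $C^*(F/G)$-valued inner product. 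Such a $\Psi_X$ is precisely the commutativity in $\CC$ of the square with edges $[\phi^E_G]$, $[\phi^F_G]$, $(\CK\circ\Qq^G)([X])$ and $(\Fix\circ\CK(G))([X])$, which is the naturality we want; and since $\CK(G)$ and $\Qq^G$ are already known to be functors, we may work with a single representative $X$ of $[X]$.

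To construct $\Psi_X$ we imitate the proof of Lemma~\ref{idibs}. First recall from \cite[\S6]{kps} the construction of the right-Hilbert $C^*(E)$--$C^*(F)$ bimodule $C^*(X)$: it is the closed span of a family of spanning elements built from the paths of $E$, the elements of the morph $X$, and the paths of $F$, and the $\alpha^E$--$\alpha^F$ compatible action $\alpha^X$ permutes these spanning elements by acting on all the data. Let $X_0(C^*(X))$ denote the dense subspace of finite linear combinations of spanning elements. Then, just as for the inclusion $X_0(E)\subseteq A_0$ in the proof of Lemma~\ref{idibs}, one checks: that $X_0(C^*(X))$ is contained in the dense subspace $\pi^*(c_c(S))\,C^*(X)\,\rho^*(c_c(S))$ out of which the semi-comma structure is built; that the averaging operator defining $\Fix$ on $C^*(X)$ (obtained from \eqref{defE} after factoring $[C^*(X),\alpha^X]$ as in Corollary~\ref{factor} and invoking Proposition~\ref{defgfpaib}) acts on each spanning element by averaging over its $G$-orbit, exactly as in \cite[\S1]{pr}; and hence that $\Fix(C^*(X),\alpha^X)$ is the closure inside $M(C^*(X))$ of the span of these averaged spanning elements, with module actions and inner products given by the multiplier operations there. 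Equivalently, one may present $L(C^*(X))$ as the $C^*$-algebra of a linking graph $E_X$, as in \cite{kps}; since $G$ acts freely on $S$ there is an equivariant surjection $E_X^0\to S$, so $E_X$ is an object of $\AGraph(G,S)$, and its quotient $E_X/G$ is the linking graph of the quotient morph $X/G$.

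It remains to assemble $\Psi_X$. By construction of the quotient morph, $C^*(X/G)$ is spanned by the $G$-orbit images of the spanning elements of $C^*(X)$, and the correspondence between such an orbit image and the corresponding averaged spanning element is given by the very same formula that identifies $C^*(E/G)$ with $C^*(E)^{\alpha^E}$ via $\phi^E_G$ (and $C^*(F/G)$ with $C^*(F)^{\alpha^F}$ via $\phi^F_G$). Hence this correspondence extends to a bimodule isomorphism $\Psi_X\colon C^*(X/G)\to\Fix(C^*(X),\alpha^X)$, and because the left and right actions and the $C^*(F/G)$-valued inner product on $C^*(X/G)$ are computed by the same formulas on spanning elements as the corresponding structure on the averaged bimodule, $\Psi_X$ is compatible with $\phi^E_G$ and $\phi^F_G$ as required. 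In the linking-graph packaging, $\Psi_X$, $\phi^E_G$ and $\phi^F_G$ are just the three corners of the $C^*$-isomorphism from $C^*(E_X/G)$ onto the generalised fixed-point algebra of $C^*(E_X)$ supplied by \cite[Corollary~1.5]{pr} and Proposition~\ref{defgfpaib}. This gives the naturality square; with the first paragraph, it proves the Proposition. The main obstacle is precisely this last bookkeeping: one must extract from \cite{kps} the explicit product and inner-product formulas for the spanning elements of $C^*(X)$ and check that orbit-averaging intertwines each of them with the corresponding formula for $C^*(X/G)$ --- equivalently, that the linking-graph construction of \cite{kps} is compatible with $G$-actions and with passage to the quotient. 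Everything else, including the verification that $\phi^E_G$ is a $\CC$-isomorphism, is formal.
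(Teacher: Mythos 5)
Your proposal follows essentially the same route as the paper: both identify $\Fix(C^*(X),\alpha^X)$ with $C^*(X/G)$ by averaging the spanning elements of the linking-graph algebra of the morph over $G$-orbits, in the style of Pask--Raeburn, and then obtain the naturality square from the compatibility of this isomorphism with $\phi_G^E$ and $\phi_G^F$. One small caveat: the linking graph is a $2$-graph (so not itself an object of $\AGraph(G,S)$), and since the Kumjian--Pask--Sims map $\iota_E$ is only an injection into (not onto) the corner $p_EC^*(\Lambda)p_E$, the map $\phi_G^E$ is not literally a corner of the linking-algebra isomorphism $\phi_G^\Lambda$; the precise statement, which the paper checks on generators, is $\iota_E|\circ\phi_G^E=\phi_G^\Lambda\circ\iota_{E/G}$, and your ``same formulas on spanning elements'' argument amounts to exactly this check.
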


\begin{proof}
We have to show that if $(X,G)$ is an $E$--$F$ morph implementing a morphism in $\AGraph(G)$ from $(E,G,\pi)$ to $(F,G,\rho)$, then the diagram
\begin{equation}\label{fixQnat}
\xymatrix{
C^*(E/G)\ar[rrr]^{C^*(X/G)}\ar[d]_{\phi_G^E}
&&&C^*(F/G)\ar[d]^{\phi_G^F}\\
\Fix(C^*(E),\alpha^E,\pi^*)\ar[rrr]^{\Fix(C^*(X),\alpha^X)}&&&\Fix(C^*(F),\alpha^F,\rho^*)
}
\end{equation}
commutes in $\CC$.

At this point, we review the construction of the bimodules $C^*(X)$ and the action $\alpha^X$. The \emph{linking graph} $\Lambda=\Lambda(X)$ of $X$ is a $2$-graph with vertex set $\Lambda^0:=E^0\sqcup F^0$, with blue edges $\Lambda^{e_1}:=E^1\sqcup F^1$ (with the usual ranges and sources), with red edges $\Lambda^{e_2}:=X$ (with ranges and sources determined by $r^X$ and $s^X$), and with the factorisation property determined by $b^X$ via $xf=ey\Longleftrightarrow b(x,f)=(e,y)$; because $b^X$ is a bijection from the set $X*F^1$ of red-blue paths onto the set $E^1*X$ of blue-red paths, we know from \cite[\S6]{kp} that there is exactly one $2$-graph with these commuting squares. This $2$-graph is row-finite and locally convex in the sense of \cite{rsy}, and its  $C^*$-algebra $C^*(\Lambda(X))$, defined in \cite{rsy}, is generated by a universal Cuntz-Krieger family $\{s_\lambda:\lambda\in \Lambda^*\}$ which satisfies a blue Cuntz-Krieger relation at vertices in $F^0$ but no red one. As usual, $C^*(\Lambda(X))$ is spanned by $\{s_\lambda s_\mu^*:\lambda,\mu\in \Lambda^*\}$.

It is shown in \cite[Lemma~6.2]{kps} that $p_E:=\sum_{v\in E^0}s_v$ and $p_F:=\sum_{v\in F^0}s_v$ converge strictly in $M(C^*(\Lambda))$ to complementary full projections, and that there are nondegenerate injections $\iota_F$ of $C^*(F)$ onto $p_FC^*(\Lambda)p_F$ and $\iota_E$ of $C^*(E)$ into (but not necessarily onto) $p_EC^*(\Lambda)p_E$. We use $\iota_F$ to identify $C^*(F)$ with the corner in $C^*(\Lambda)$. Then the corner $p_EC^*(\Lambda)p_F$ is a full right-Hilbert $C^*(E)$\,--\,$C^*(F)$ bimodule, which we denote by $C^*(X)$. Theorem~6.6 of \cite{kps} says that the assignment $[X]\mapsto [C^*(X)]$ makes $\CK$ into a contravariant functor from $\Graph$ to $\CC$. The free action of $G$ on $X$ induces a free action of $G$ on the linking graph $\Lambda$, and this in turn induces an action $\alpha^\Lambda$ on $C^*(\Lambda)$ which is characterised by $\alpha^\Lambda_t(s_\lambda)=s_{\lambda\cdot t^{-1}}$. The projections $p_E$ and $p_F$ are fixed by $\alpha^\Lambda$, and hence $\alpha^\Lambda$ restricts to an action $\alpha^X$ on the corner $C^*(X)$.

The linking algebra of the right Hilbert $C^*(F)$-module $C^*(X)$ is by definition $C^*(\Lambda)$, and the action $L(\alpha^X)$ is $\alpha^\Lambda$. With the diagonal embedding $\phi_\Lambda:=(\iota_E\circ\pi^*)\oplus\rho^*$ of $c_0(S)$, $(C^*(\Lambda),\alpha^\Lambda, \phi_\Lambda)$ is an element of the semi-comma category; the morphism $\Fix(C^*(X),\alpha^X)$ in the bottom row of \eqref{fixQnat} has underlying right Hilbert module $p_E\Fix(C^*(\Lambda),\alpha^\Lambda, \phi_\Lambda)p_F$, and the left action of $\Fix C^*(E)$ is given by the restriction $\iota_E|$ of the nondegenerate homomorphism $\iota_E$.

The module $C^*(X/G)$ in the top row of \eqref{fixQnat} is obtained by applying the construction described above to the linking graph $\Lambda(X/G)$ of the quotient morph, which we can identify with the quotient $2$-graph $\Lambda(X)/G$. As in \cite[Lemma~1.4]{pr}, the averages $\{E^A(s_\lambda):\lambda\cdot G\in\Lambda(X)/G\}$ form a Cuntz-Krieger $(\Lambda(X)/G)$-family in $M(C^*(\Lambda(X))$, and, as in \cite[Corollary~1.5]{pr}, the induced map $\phi_G^\Lambda:C^*(\Lambda(X/G))\to M(C^*(\Lambda(X)))$ is an isomorphism onto $\Fix C^*(\Lambda(X))$.  This isomorphism maps $p_{E/G}$ and $p_{F/G}$ to $p_E$ and $p_F$, and hence it maps $C^*(X/G)$, which is by definition the corner $p_{E/G}C^*(\Lambda(X/G))p_{F/G}$, onto $\Fix(C^*(X),\alpha^X):=p_E(\Fix C^*(\Lambda(X)))p_F$. The restriction of $\phi_G^\Lambda$ to $C^*(F/G)=p_{F/G}C^*(\Lambda(X/G))p_{F/G}$ is the isomorphism $\phi_G^F$ of $C^*(F/G)$ onto $\Fix C^*(F)=p_F(\Fix C^*(\Lambda(X)))p_F$. The left action of $C^*(E/G)$ on $C^*(X/G)$ is defined via the embedding $\iota_{E/G}$ of $C^*(E/G)$ in $p_{E/G}C^*(\Lambda(X)/G)p_{E/G}$, and we can check on generators that $\iota_E|\circ\phi_G^E=\phi_G^\Lambda\circ\iota_{E/G}$. Thus the isomorphism $\phi_G^\Lambda$ is $\phi_G^E$--$\phi_G^F$ compatible, and hence is an isomorphism of right-Hilbert bimodules. But the existence of such an isomorphism implies that the diagram \eqref{fixQnat} commutes in the category $\CC$.
\end{proof}

\begin{proof}[Proof of Theorem~\ref{prnatural}]
Suppose $G$ acts freely on $E$ and $F$ and $(X,G)$ is an $(E,G)$--$(F,G)$ morph. We need to show that the following diagram commutes in $\CC$:
\begin{equation}\label{prnatdiag}
\xymatrix{
C^*(E)\rtimes_{\alpha^E,r}G\ar[rrr]^{C^*(X)\rtimes_{\alpha^X,r}G}\ar[d]_{Z(E,G)}
&&&C^*(F)\rtimes_{\alpha^F,r}G\ar[d]^{Z(F,G)}\\
C^*(E/G)\ar[rrr]^{C^*(X/G)}&&&C^*(F/G).
}
\end{equation}

Since $G$ acts freely on the discrete sets $E^0$ and $F^0$, they are trivial as $G$-bundles, and we can find equivariant surjections $\pi:E^0\to G$ and $\rho:F^0\to G$. Then with $S:=G$, $[X,G]:(E,G,\pi)\to (F,G,\rho)$ is a morphism in the semi-comma category $\AGraph(G,S)$, and Proposition~\ref{applymainthm} implies that the top square in the following diagram commutes in $\CC$:
\begin{equation}\label{combine}
\xymatrix{
C^*(E)\rtimes_{\alpha^E,r}G\ar[rrr]^{C^*(X)\rtimes_{\alpha^X,r}G}\ar[d]_{Z(C^*(E),\alpha^E,\pi^*)}
&&&C^*(F)\rtimes_{\alpha^F,r}G\ar[d]^{Z(C^*(F),\alpha^F,\rho^*)}\\
\Fix(C^*(E),\alpha^E,\pi^*)\ar[rrr]^{\Fix(C^*(X),\alpha^X)}\ar[d]_{(\phi_G^E)^{-1}}&&&\Fix(C^*(F),\alpha^F,\rho^*)\ar[d]^{(\phi_G^F)^{-1}}\\
C^*(E/G)\ar[rrr]^{C^*(X/G)}
&&&C^*(F/G)
}
\end{equation}
Proposition~\ref{fix=quot} says that the bottom square commutes in $\CC$, and hence so does the outside square. In the discussion preceding Proposition~\ref{fix=quot} we observed that using the isomorphism $\phi_G^E$ to view $Z(C^*(E),\alpha^E,\pi^*)$ as a $(C^*(E)\rtimes_{\alpha^E,r}G)$\,--\,$C^*(E/G)$ bimodule gives $Z(E,G)$, and this says precisely that the composition of the two vertical arrows on the left is $Z(E,G)$. Similarly, the composition on the right is $Z(F,G)$. So the outside square in \eqref{combine} is \eqref{prnatdiag}, and we are done.
\end{proof}

\begin{remark}
It is a little unnerving that we have \emph{chosen} the equivariant surjections $\pi$ and $\rho$ quite arbitrarily in the above proof. However, Lemma~\ref{idibs} provides some comfort: different choices give the same fixed-point algebra and the same imprimitivity bimodule, and so do not change the vertical arrows in the top square of \eqref{combine}.
\end{remark}

\end{document}